\newcommand{\EE}{E}
\newcommand\RR{\mathbb{R}}
\newcommand\ZZ{\mathbb{Z}}
\newcommand\tfin{\tau_{\textup{fin}}}
\DeclareMathOperator{\meas}{meas}
\DeclareMathOperator{\sign}{sign}
\DeclareMathOperator{\Span}{span}
\renewcommand\appendix{\section*{APPENDIX}\vspace{-.5\baselineskip}
  \setcounter{section}{0}%
  \setcounter{subsection}{0}%
  \renewcommand\thesection{\Alph{section}}}
\begin{document}

\title{Time Versus Energy in the Averaged Optimal Coplanar Kepler Transfer towards Circular Orbits 
    \thanks{The second author was partially supported by \textit{Thales Alenia Space} and \textit{région Provence Alpes
        Côte d'Azur}
     }
}

\titlerunning{Averaged Optimal Coplanar Kepler Transfer} 

\author{
Bernard Bonnard
  \and 
Helen C. Henninger
  \and \\
Jana N{\v e}mcov\'a
  \and 
Jean-Baptiste Pomet
}

\authorrunning{B. Bonnard,
   H. Henninger,
   J. N{\v e}mcov\'a,
   J.-B. Pomet}

\institute{
              B. Bonnard \at
              Institut de Mathématiques de Bourgogne, Université de Bourgogne, \\9 avenue Alain Savary, 21078 Dijon,
              France.  \\\email{bernard.bonnard@u-bourgogne.fr} 
              \\ 
              On leave to: team McTAO, Inria Sophia Antipolis Méditerrannée.
                       %  \\
%             \emph{Present address:} of F. Author  %  if needed
           \and
              H. Henninger \at 
              team McTAO, Inria Sophia Antipolis Méditerrannée, \\ 2004 rte des lucioles, B.P. 92, 06902 Sophia Antipolis cedex, France.\\
              \email{helen-clare.henninger@inria.fr}
           \and
              J. N{\v e}mcov\'a \at
Department of Mathematics,
Institute of Chemical Technology,\\
Technick{\'a} 5,
166 28 Prague 6,
Czech Republic. \\\email{jana.nemcova@vscht.cz}
           \and
              J.-B. Pomet \at
              team McTAO, Inria Sophia Antipolis Méditerrannée, \\ 2004 rte des lucioles, B.P. 92, 06902 Sophia Antipolis cedex, France.\\
              \email{jean-baptiste.pomet@inria.fr}
}

\date{Received: date / Accepted: date}
% The correct dates will be entered by the editor

\maketitle

\begin{abstract}
  This article makes a study of the averaged optimal coplanar transfer towards circular orbits.
  Our objective is to compare this problem when the  cost minimized is  transfer time to the same problem when the cost minimized is energy consumption.
  While the minimum energy case leads to the analysis of a $2D-$ Riemannian metric using
  the standard tools of Riemannian geometry, the minimum time case is
  associated with a Finsler metric which is not smooth. Nevertheless a qualitative analysis of the geodesic flow is
  given in this article to describe the optimal transfers of the time minimal case. 
\end{abstract}

\keywords{
  Averaging, Optimal control, Low thrust orbit transfer, Geodesic convexity, Riemann-Finsler Geometry
}

\section{Introduction}

We consider the controlled Kepler equation describing orbital transfers with low thrust engines,
that we normalize as 
\begin{equation}
\label{eq:5}
\ddot{q} =-\frac{q}{\| q\|^{3}}+u;
\end{equation}
the control is constrained by $\| u\|\leq\varepsilon$, where
$\varepsilon$ is a small parameter. The phase space, or state space, is the one with coordinates $(q,\dot{q})$.
Let $K=\frac12\,\|\dot{q}\| ^{2}-1/\| q\|$ be the mechanical energy of the uncontrolled system and $X$ be the elliptic
domain:
\[
X=\{K<0,q\wedge\dot{q}\neq0\}\,.\]
For the free motion ($u = 0$), the solutions that lie in $X$ are ellipses ---or more precisely closed curves that project
on the $q$ component as ellipses--- and they form a foliation of $X$.

In this domain, we may chose coordinates $(x,l)$ where $x$ is made of independent first integrals
of the uncontrolled motion (so that $x$ describes the geometry of the ellipses) and the ``longitude'' $l$ defines the
position of the spacecraft on this ellipse; $(q,\dot q)$ can be expressed in terms of $(x,l)$ and vice versa.
Restricting to the coplanar case, where $q$ and $\dot q$ have dimension 2 and $x$ has dimension 3, the system can be written as
\begin{displaymath}
\dot{x}=\sum_{i=1,2}u_{i}F_{i}(x,l)\,,\ \ \ \ 
\dot{l} =\Omega(x,l)\,,
\end{displaymath}
where the control $u=(u_{1},u_{2})$ is the coordinates of the original acceleration $u$ in some frame
$F_{1},F_{2}$, e.g., the tangential/normal frame (the vector fields $F_{1},F_{2}$ are another basis of the distribution
spanned by $\partial/\partial \dot q_1, \partial/\partial \dot q_2$ in the original cartesian coordinates).
In these coordinates, the free motion is $\dot x=0,\dot{l} =\Omega(x,l)$; there may be a control term in $\dot l$ too but
we neglect it for clarity.

The energy minimization problem is the one of minimizing a quadratic criterion $\int \|u\|^2\mathrm{d}t$ for fixed
initial and final value of $x$, and free $l$; it was analyzed from the averaging point of view in a series of articles
\cite{Edel64,Edel65}, \cite{Geff-Epe97,Geff97th}, \cite{Bonn-Cai09forum}.
The Pontryagin maximum principle yields (for any type of cost: energy, final time or others) an Hamiltonian on the cotangent bundle of the state space with the property
that a minimizing trajectory must be the projection of an integral curve of the Hamiltonian vector field.
For energy minimization, this Hamiltonian is
\[
H(x,p,l)={\textstyle\frac12}(H_{1}(x,p,l)^{2}+H_{2}(x,p,l)^{2})\]
where $H_{i}(x,p,l)=\langle \,p,F_{i}(x,l)\rangle$ are the Hamiltonian lifts of
the vector fields $F_i$ and $p$ is the vector of costate variables of the same dimension as the state vector.

As the bound $\varepsilon$ tends
to zero, the time needed to reach a given orbit tends to infinity.
During this very long time, the variable $x$ move slowly because the control is small while variables like
$l$ move fast thanks to the term $\Omega$; this yields ill conditioned integration if numeric methods are used.
It may be shown that there is an average Hamiltonian
\[
H(x,p)=\frac{1}{2\pi}\int_{0}^{2\pi}\varpi(x,l)H(x,p,l)\mathrm{d}l\,,\]
with $\varpi$ some weight function to be determined, that eliminates the fast variable $l$ and whose Hamiltonian flow
gives a remarkably good approximation of the movement of $x$ in the original system if $\varepsilon$ is indeed small.
It sometimes leads to explicit formulas, and is anyway much better conditioned numerically because the fast
variable has been eliminated.

We shall recall briefly these facts but are more interested in studying qualitatively this new Hamiltonian.
We refer the reader to \cite[\S52]{Arno89} (although no control is considered there) for details on this approximation and its validity.
It turns out that it is quadratic definite positive with respect to $p$ and hence derives from a Riemannian metric on
$X$; furthermore, the coefficients of this metric can be
explicitly computed. In the coplanar case the geodesic flow is Liouville
integrable and the metric associated to a subproblem related to transfer from an arbitrary orbit (in $X$) to a circular
one is even flat: in suitable coordinates the minimizing
solutions are straight lines
\cite{Bonn-Cai09forum}. Moreover this result is still true if the
thrust is oriented only in the tangential direction \cite{Bonn-Cai-Duj06b}.

The same averaging technique can be applied in the minimum time case. The non averaged Hamiltonian reads
$\sqrt{H_{1}^{2}(x,p,l) + H_{2}(x,p,l)^{2}}$ and again an averaged Hamiltonian may be constructed:
\[
H(x,p)=\frac{1}{2\pi}\int_{0}^{2\pi}\varpi(x,l)\sqrt{H_{1}^{2}(x,p,l) + H_{2}(x,p,l)^{2}}\,\mathrm{d}l\,.\]
Like in the energy case, this Hamiltonian derives from a metric on $X$, i.e. the data of a norm on each tangent space to $X$;
however, unlike in the energy case and as observed in the article \cite{Bomb-Pom13}, these norms are not associated with inner products on
these tangent spaces ---this defines a Finsler metric \cite{Bao-Che-She00}, not necessarily Riemannian--- and are not everywhere smooth.
Technical problems involved in going from Riemannian to non
smooth Finsler geometry make the computations of time minimal transfer towards circular
orbits a complicated problem. 

The objective of this article is to make a preliminary qualitative
description of the time minimum transfers and to compare them with the energy minimum ones:
section~\ref{sec:prelim} recalls the equations and the computation of the
average Hamiltonians; section~\ref{sec:energy} recalls the results from
\cite{Bonn-Cai09forum,Bonn-Cai-Duj06b} on the minimum energy problem; section~\ref{sec:Tmin} provides a new analysis of the minimum time problem, for transfers to
circular orbits, and in particular proves that the elliptic domain is geodesically convex in this case;
section~\ref{sec:compa} explains why that proof fails in the minimum energy problem, which is consistent with the
non-convexity mentioned in \cite{Bonn-Cai09forum}.

\section{Preliminaries}
\label{sec:prelim}

\subsection{Hamiltonian formalism, Pontryagin maximum principle}
%%%%%%%%%%%%%%%%%%%%%%%%%%%%%%%%%%%%%%%%%

The goal of this paper is to study some Hamiltonian systems associated to optimal control problems.
For the sake of self containedness, let us sketch the relation to the optimal control problems.

Consider the smooth control system $\dot{x} = f(x,u,t)$ for $x \in X$, an $n$-dimensional manifold, $t \in \mathbb{R}$ and $u \in B\subset\mathbb{R}^{\ell}$.\\

An \emph{optimal control problem} on $X$ associated with the control system 
$\dot{x} = f(x,u,t)$  is, for instance, the problem of finding relative to the given points $x_0,x_T$
the trajectory $x(\cdot)$ and control $u(\cdot)$, and possibly the final time $T$ if it is not specified, such that
\begin{equation}\label{CntrlPrbm}
\begin{tabular}{ r l}
\(\dot{x}\)&\( = \;\,\, f(x,u,t),\qquad x \in X,\; (u_1,u_2,...,u_{\ell}) \in B\subset\mathbb{R}^{\ell}\)\\
\(x(0)\)&\( =\;\; x_0,\quad x(T) = x_T\)\\
\(\mathcal{J}\)&\( =\;\; \int_{0}^{T}\mathcal{L}(x(t),u(t))dt\rightarrow \mbox{Min}.\)
\end{tabular}
\end{equation}
We call ``minimum time'' the problem where $\mathcal{L}(x,u)=1$ and $T$ is free, and ``minimum energy'' the one
where $T$ is fixed and $\mathcal{L}(x,u)=\|u\|^2$.

The Hamiltonian of the optimal control problem (\ref{CntrlPrbm}) is the function
\[\mathcal{H}(x,p,u,p_0, t)= p_0\mathcal{L}(x,u) + \langle\, p,f(x,u,t)\rangle\,\]
where $p$ is a vector of costate variables  (the adjoint vector) of the same dimension as the state variables $x(t)$, and $p_0$ 
is either $0$ or $-1$.
The Pontryagin maximum principle \cite{Pont-Bol-Gam-M62} (see also \cite[Chap. 6]{Bonn-Fau-Tre06} for applications to
the problems we consider here) is a powerful necessary condition for optimality, that states
the following: if $(x(\cdot),u(\cdot))$ is an optimal trajectory-control pair of the above optimal control problem on a
time interval $[0,T]$, then it can be lifted to a parameterized curve $t\mapsto(x(t),p(t))$ on the cotangent bundle
$T^\star X$ ($p$ is the adjoint vector, or the vector of costate variables) that satisfies, for almost all time and
either for $p_0=0$ or for $p_0=-1$,
\begin{align}\nonumber
  &\dot x(t)=\frac{\partial \mathcal{H}}{\partial p}(x(t),p(t),u(t),p_0, t)=f(x(t),u(t),t)
\\\label{eq:48}
  &\dot{p}(t)=-\frac{\partial \mathcal{H}}{\partial p}(x(t),p(t),u(t),p_0, t)
\end{align}
and, for almost all $t$, $\mathcal{H}(x(t),p(t),u(t),p_0, t)$ is the maximum of $\mathcal{H}(x(t), p(t), u,$ $p_0, t)$
with respect to $u\in B$.
The solutions where $p_0=0$ are called abnormal. Let us assume $p_0=-1$.

In the problems we consider here, we are in the nice situation where for all $(x,p,t)$, or almost all $(x,p,t)$, there is
a unique $u^\star(x,p,t)$ such that 
$$
H(x,p,t)=\mathcal{H}(x,p,u^\star(x,p,t),-1, t)=\max_{u\in B}\mathcal{H}(x,p,u,-1, t)
$$
(the second equality is a property of $u^\star(x,p,t)$; the first equality is the definition of $H$ from $\mathcal{H}$
and $u^\star$).
In that case, one may sum up the above in the following way: if $(x(\cdot),u(\cdot))$ is an optimal trajectory, then
$x(.)$ may be lifted to a solution of the Hamiltonian vector field associated to $H$ on $T^\star X$:
\begin{equation}
  \label{eq:49}
  \dot{x} = \frac {\partial H}{\partial p}(x,p,t), \quad \dot{p} =- \frac {\partial H}{\partial x}(x,p,t).
\end{equation}
The situation is even nicer if $u^\star$ is a smooth function of $x,p,t$; if not, one must be careful about existence
and uniqueness of solutions of solutions to this differential equation.

We kept the above time-varying system because we will encounter time-periodic Hamiltonians that we average with respect to time,
or with respect to a variable that we may view as a new time.

\subsection{Coordinates}

First of all, we recall the equations describing the planar controlled Kepler
problem in the elliptic case (mechanical energy $K$ is negative).

If we chose as coordinates $(n,e_x,e_y,l)$ where $n$ is the mean movement ($n=\sqrt{1/a^{3}}=(-2K)^{3/2}$; $a$ is the semi-major axis), $(e_x,e_y)$
are the coordinates of the eccentricity vector in a fixed frame and $l$ is the ``longitude'', or the polar angle with
respect to a fixed direction, then the elliptic domain is given by $\{n>0,\,{e_x}^2+{e_y}^2<1\}$. The control system is
described by the Gauss equations, where $u_t,u_n$ are the coordinates of the control in the tangential-normal frame:
\begin{subequations}
\label{eq:equinox}
  \begin{align}
    \label{eq:6}
\displaybreak[0]
    &\dot{n}=-3n^{2/3}\frac{\sqrt{1+2\,(e_x\cos l+e_y\sin l)
        +{e_x}^{2}+{e_y}^{2}}}{\sqrt{1-{e_x}^{2}-{e_y}^{2}}}\,u_{t} \\
    \nonumber &\dot{e}_x= n^{-1/3}\frac{\sqrt{1-{e_x}^{2}-{e_y}^{2}}}{\sqrt{1+2\,(e_x\cos l+e_y\sin l)
        +{e_x}^{2}+{e_y}^{2}}} 
\\
\displaybreak[0]
&\label{eq:7}\hspace{7em}\times\!\!\left[2\,(\cos l+e_x)\,u_t -\frac{\sin l+2\,e_y+2 e_x
        e_y\cos l-({e_x}^{2}-{e_y}^{2})\sin l}{\sqrt{1-{e_x}^{2}-{e_y}^{2}}}\,u_n\right]
    \\
    \nonumber &\dot{e}_y= n^{-1/3}\frac{\sqrt{1-{e_x}^{2}-{e_y}^{2}}}{\sqrt{1+2\,(e_x\cos l+e_y\sin l)
        +{e_x}^{2}+{e_y}^{2}}} 
\\
\displaybreak[0]
&\label{eq:8}\hspace{7em}\times\!\!\left[2\,(\sin l+e_y)\,u_t
      -\frac{\cos l+2\,e_x+({e_x}^{2}-{e_y}^{2}) \cos l+2 e_x e_y\sin l}{\sqrt{1-{e_x}^{2}-{e_y}^{2}}}\,u_n\right] \\
    \label{eq:9}
    &\dot{l}=n\,\frac{(1+e_x\cos l+e_y\sin l)^{2}}{(1-e^{2})^{3/2}}.
  \end{align}
\end{subequations}

Instead of $e_x,e_y$, it will be more convenient to use the eccentricity $e$ and the argument
of the pericenter $\omega$ (not defined if $e=0$), defined by
\begin{equation}
  \label{eq:10}
  e_{x}=e\cos \omega,\ e_{y}=e\sin \omega\;.
\end{equation}
The equations become:
\begin{subequations}
  \label{eq:42}
  \begin{align}
    \label{eq:1}
    &\dot{n}=-\frac{3n^{2/3}}{\sqrt{1-e^{2}}}\left[\sqrt{1+2e\cos v+e^{2}}\,u_{t}\right]
    \\
    \label{eq:2}
    &\dot{e}=\frac{\sqrt{1-e^{2}}}{\sqrt[3]{n}}\frac{1}{\sqrt{1+2e\cos v+e^{2}}}\left[2(e+\cos v)\,u_{t}-\sin
      v\frac{1-e^{2}}{1+e\cos v}\,u_{n}\right]
    \\
    \label{eq:3}
    &\dot{\omega}=\frac{\sqrt{1-e^{2}}}{e\sqrt[3]{n}}\frac{1}{\sqrt{1+2e\cos v+e^{2}}}\left[2\sin v \,u_{t}
          +\frac{2e+\cos v+e^{2}\cos v}{1+e\cos v}\,u_{n}\right] 
    \\
    \label{eq:4}
    &\dot{l}=n\,\frac{(1+e\cos v)^{2}}{(1-e^{2})^{3/2}}.
  \end{align}
\end{subequations}
The angle $v$ is the true anomaly
\begin{equation}
  \label{eq:24}
  v=l-\omega.
\end{equation}
In these coordinates, the elliptic domain is
\begin{equation}
  \label{eq:47}
X=\{x=(n,e,\omega)\,,\  n>0,\,0\leq e<1,\,\omega\in S^{1}\}\;.  
\end{equation}

\begin{remark}[Transfer towards a circular orbit]
  \label{rmk:omega}
In the transfer ``towards a circular orbit'' (or merely if we do not take into account the direction of the semi-major
axis during the transfer), we may use these coordinates although they are singular at $e=0$, because the variable
$\omega$ may simply be ignored; this is possible because it is a cyclic variable, i.e. it does not influence the
evolution of the other variables $(n,e,v)$.
In the variables $(n,e)$, the elliptic domain is:
\begin{equation}
  \label{eq:33}
  \mathcal{X}=\{(n,e),\; 0<n<+\infty,\;-1<e<1\}\;.
\end{equation}
The fact that negative values of $e$ are allowed comes from identifying $(-e,\omega)$ with $(e,\omega+\pi)$, or, equivalently, considering that
$(e_x,e_y)$ (see \eqref{eq:10}) lies on a line of fixed arbitrary direction instead of a half-line.
This line may for instance be $\{e_y=0\}$, and $\mathcal{X}$ is then identified with 
$\{(n,e_x,e_y),\,n>0,-1< e <1, e=e_{x},e_{y}=0\}$.
\end{remark}

Equations \eqref{eq:1}-\eqref{eq:4} read:
\begin{equation}
  \label{eq:11}
\dot{x} =\sum_{1\leq i\leq 2} u_i\,F_i(x,l),\;\;\dot{l}=\Omega(x,l)
\end{equation}
where $u_1, u_2$ stand for $u_n,u_t$,  $x = (n,e,\omega)$, the vectors $F_1,F_2$ are readily obtained from \eqref{eq:1}-\eqref{eq:3}, and
\begin{equation}
  \label{eq:16}
  \Omega(x,l)=n\,\frac{(1+e\cos(l-\omega))^{2}}{(1-e^{2})^{3/2}}\ .
\end{equation}

One way to introduce averaging is to use the so-called ``mean eccentric anomaly''. The eccentric anomaly is $\EE$,
related to $e$ and $v$ by
\begin{equation}
\label{eq:20}
  \tan\frac v 2\ =\ \sqrt{\frac{1+e}{1-e}}\tan\frac E 2\,
\end{equation}
and the mean eccentric anomaly is $\EE-e\sin \EE$; the Kepler equation (third Kepler law) implies that, when the control is zero, 
\[
\EE-e\sin \EE=n\,t\,,\]
$t=0$ being the time at the pericenter.
Introducing (see for instance \cite[sec. 3.6.3]{Bonn-Fau-Tre06})
$$x_{0}=(\EE-e\sin \EE)/n\,,$$
one has $\dot{x}_0 =1$ if $u=0$, i.e. the variable $x_0$ behaves like time modulo an additive constant;
this is an implementation of the flow-box theorem. In the coordinates $(x,x_0)$, the system becomes
\[\dot{x}=\sum_{i=1,2}u_{i}\widehat{F}_{i}(x,x_{0}),\;\;\;\,\dot{x}_0 =1+\sum_{i=1,2}u_iG_{i}(x,x_{0}).\]
 Due to the implicit relation between $\EE$ and $x_0$, the practical derivation of such equations is complicated, but
 they will be useful in formally identifying averaging with
respect to $l\in[0,2\pi]$ and averaging with respect to $t\in[0,2\pi/n]$.

We define the Hamiltonian lifts ($i=1,2$):
\begin{equation}
  \label{eq:12}
  H_{i}(x,p,l)=\langle p,F_{i}(x,p,l)\rangle\,,\ \ \ \widehat{H}_{i}(x,p,x_{0})=\langle p,\widehat{F}_{i}(x,p,x_{0})\rangle\,.
\end{equation}

\subsection{Averaging}

Using the previous equations and rescaling the control with $u=\varepsilon v$
to introduce the small parameter, the trajectories parameterized by
$x_{0}$ are solutions of 

\[
\frac{dx}{dx_{0}}=\frac{\varepsilon\sum_{i=1,2}v_{i}\widehat{F}_{i}(x,x_{0})}{1+\varepsilon\sum_{i=1,2}v_{i}G_{i}(x,x_{0})},\]

which is approximated for small $\varepsilon$ by 

\[
\frac{dx}{dx_{0}}=\varepsilon\sum_{i=1,2}v_{i}\widehat{F}_{i}(x,x_{0}).\]
For this system, we consider the following minimization problems:
\begin{equation*}
\begin{tabular}{ c l }
\(\bullet\) & \(\displaystyle \mbox{Energy}\;:\; \min_{v}\varepsilon^{2}\int_{0}^{x_{0}}\sum_{i=1,2}v_{i}^{2}\mathrm{d}t\)\\[0.3cm]
\(\bullet\) & \(\displaystyle \mbox{Time}\; :\; \min_{v}\;x_{0}, \| v\|\leq 1. \)\\
\end{tabular}
\end{equation*}

Applying the Pontryagin maximum principle leads to the following respective
Hamiltonians (normal case in the energy minimization problem), 
\begin{equation}
  \label{eq:44}
  \begin{split}
    H_{\mathrm{e}}(x,p,x_{0})=\sum_{i=1,2}\widehat{H}_{i}(x,p,x_{0})^{2}
\,,\ \ \ \ 
    H_{\mathrm{t}}(x,p,x_{0})=\sqrt{\sum_{i=1,2}\widehat{H}_{i}(x,p,x_{0})^{2}}\,,
  \end{split}
\end{equation}
where the lifts $\widehat{H}_{i}$, defined by \eqref{eq:12}, are periodic
with respect to $x_{0}$ with period $2\pi/n$. 
\begin{remark}[Tangential thrust]
  \label{rmk:tangential}
If the normal component $u_n$ is forced to be zero, there is a single term in the sums in 
\eqref{eq:44}, and these equations become
\quad
$H_{\mathrm{e}}={\widehat{H}_{1}}^{2}$,
\quad
$H_{\mathrm{t}}=\left|\widehat{H}_{1}\right|$. The considerations in the present section are valid both in the full
control case and in the ``tangential thrust'' case.
\end{remark}
The respective averaged Hamiltonians are
\begin{gather}
  \label{eq:13}
  H_{\mathrm{e}}(x,p)=\frac{n}{2\pi}\int_{0}^{2\pi/n}H_{\mathrm{e}}(x,p,x_{0})dx_{0} \\
  \label{eq:14}
  H_{\mathrm{t}}(x,p)=\frac{n}{2\pi}\int_{0}^{2\pi/n}H_{\mathrm{t}}(x,p,x_{0})dx_{0}\,.
\end{gather}
(for ease of notation we use $H_\mathrm{e}, H_\mathrm{t}$ to represent both the Hamiltonians and the averaged Hamiltonians, although the inputs into these functions are different).
These may be re-computed in terms of $H_1,H_2$. Unlike when $\widehat{H}_1,\widehat{H}_2$ are used in the computation, using the Hamiltonian lifts $H_1, H_2$ allows for an explicit expression of
the averaged Hamiltonians $H_\mathrm{e}(x,p), H_\mathrm{t}(x,p)$. Making the change of
variables $x_0=\Xi(e,\omega,l)$ ---with $\Xi$ deduced from $x_{0}=(\EE-e\sin \EE)/n$, \eqref{eq:20} and
\eqref{eq:24}--- in the integral, and using the facts that $\partial\Xi/\partial l=1/\Omega(x,l)$ and 
$$
% \widehat{H}_i(x,p,\Xi(e,\omega,l))=H_i(x,p,l)\,,\;i=1,2\,,\  \ 
\widehat{H}_\mathrm{e}(x,p,\Xi(e,\omega,l))=H_\mathrm{e}(x,p,l)\,,\;\;
\widehat{H}_\mathrm{t}(x,p,\Xi(e,\omega,l))=H_\mathrm{t}(x,p,l)\,,
$$
then, using \eqref{eq:16},
\begin{gather}
  \label{eq:25}
  H_{\mathrm{e}}(x,p)=\frac{(1-e^{2})^{3/2}}{2\pi}\int_{0}^{2\pi}\left(\sum_{i=1,2}H_{i}(x,p,l)^{2}\right) \frac{\mathrm{d}l}{(1+e\cos(l-\omega))^{2}} \\
  \label{eq:15}
  H_{\mathrm{t}}(x,p)=\frac{(1-e^{2})^{3/2}}{2\pi}
\int_0^{2\pi}\sqrt{\sum_i H_i(x,p,l)^2}\frac{\mathrm{d}l}{(1+e\cos(l-\omega))^{2}}\,.
\end{gather}

% Observe that under some regularity assumptions \cite{Bomb-Pom13},
% the averaged Hamiltonian system is associated to a Riemannian problem
% in the first case while the second case represents a non smooth Finsler
% problem, both defined by \textbf{XXXX}taking the set of
% admissible velocities in each tangent space as the unit ball for a norm that depends on the point, but in the first case
% this norm is Euclidean (hence the set of admissible velocities is an ellipsoid) and depends smoothly on the point while
% in the second case it is not Euclidean and not necessarily everywhere twice differentiable.

\begin{remark}
  \label{rem-smallthrust} In the original system, the control is ``small'' (parameter $\varepsilon$).
The average system that we study in the next sections can be seen as a limit as $\varepsilon\to0$.

The smaller $\varepsilon$ is, the better the average system approximates the real system, but neither the results of this
paper not any analysis or simulation in the next sections depend on the size of $\varepsilon$, that is on the magnitude
of the thrust.
\end{remark}

\paragraph{Singularities.}
Let us explain how the non smoothness is a result of the averaging of singularities of a control
system.
Consider the time minimal control problem for a generic smooth system of the
form 
\[\dot{x} =F_{0}(x)+\sum_{i=1,m}u_{i}F_{i}(x),\;\;\| u\|\leqq1.\]
 Moreover assume for simplicity that the control distribution $D=\Span\{F_{1}, \ldots$, $F_{m}\}$
is involutive. From the maximum principle in this case, the
extremal control is defined by $u_{i}=\frac{H_{i}(x,p)}{\sqrt{\sum_i H_{i}^{2}(x,p)}}$
where $H_{i}(x,p)$ are the Hamiltonian lifts of $F_{i}(x)$. More
complicated extremals are related to the switching surface $\Sigma:$
$H_{i}=0.$ Observe that in the single-input case the control is given
by $u_{1}=\sign H_{1}(x,p)$ and meeting the surface $\Sigma$ transversally
corresponds to a regular switching. This can be generalized to the
multi-input case. More complicated singularities can occur in the
non transversal case, for instance in relation with singular trajectories
of the system (contained by definition in the surface $\Sigma$) \cite{Bonn-Sug12book}.

\section{The analysis of the averaged systems for minimum energy}
\label{sec:energy}

First of all we recall the results from the energy case \cite{Bonn-Cai09forum} . The energy minimization problem is expressed as
\[
\int_{0}^{l_{f}}\left(u_{1}^{2}(t)+u_{2}^{2}(t)\right)dt\rightarrow \mbox{Min},\]
 where we fix the final cumulated longitude $l_{f}$ (this is slightly different from fixing the transfer time).

\subsection{The coplanar energy case}

In this case the averaged system can be computed explicitly by quadrature,
and we have the following proposition.

\begin{proposition}
In the coordinates $(n,e,\omega)$ the averaged Hamiltonian (up
to a positive scalar) is given by
\begin{equation}
  \label{eq:30}
  H_\mathrm{e} =\frac{1}{n^{5/3}}[18n^{2}p_{n}^{2}+5(1-e^{2})p_{e}^{2}+\frac{5-4e^{2}}{e^{2}}p_{\omega}^{2}]
\end{equation}
 where the singularity $e=0$ corresponds to circular orbits. In particular
$(n,e,\omega)$ are orthogonal coordinates for the Riemannian metric
associated to $H$, namely

\[
g=\frac{1}{9n^{1/3}}\mathrm{d}n^{2} + \frac{2n^{5/3}}{5(1-e^{2})}\mathrm{d}e^{2} + \frac{2n^{5/3}}{5-4e^{2}}\mathrm{d}\omega^{2}.\]
\end{proposition}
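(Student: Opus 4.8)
The plan is to start from the explicit averaged Hamiltonian \eqref{eq:25} and to compute the entries of the quadratic form $\sum_i H_i^2$ in the momenta $(p_n,p_e,p_\omega)$. Since each lift $H_i=\langle p,F_i\rangle$ is linear in $p$, the components of $F_1,F_2$ are read directly off the Gauss equations \eqref{eq:1}--\eqref{eq:3} (with $F_1$ the $u_n$-column and $F_2$ the $u_t$-column), so every entry of the averaged form is an integral of the type $\frac{(1-e^2)^{3/2}}{2\pi}\int_0^{2\pi}(\,\cdot\,)(1+e\cos v)^{-2}\,\mathrm{d}v$, where I set $v=l-\omega$ as in \eqref{eq:24} so that the integration runs over one period of the true anomaly.

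The main computational device is to rationalise these integrands by passing to the eccentric anomaly $\EE$. Indeed the averaging \eqref{eq:13} is really taken over the mean anomaly, for which $\mathrm{d}(\EE-e\sin\EE)=(1-e\cos\EE)\,\mathrm{d}\EE$, and the classical relations deduced from \eqref{eq:20}, namely $1+e\cos v=(1-e^2)/(1-e\cos\EE)$, $\cos v=(\cos\EE-e)/(1-e\cos\EE)$, $\sin v=\sqrt{1-e^2}\,\sin\EE/(1-e\cos\EE)$, and the resulting identity $1+2e\cos v+e^2=(1-e^2)(1+e\cos\EE)/(1-e\cos\EE)$, turn every factor $\sqrt{1+2e\cos v+e^2}$ and every power of $(1+e\cos v)$ into rational functions of $\cos\EE$. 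After this substitution each diagonal entry collapses to $\frac{1}{2\pi}\int_0^{2\pi}P(\cos\EE)(1+e\cos\EE)^{-1}\,\mathrm{d}\EE$ with $P$ a low-degree polynomial, which is evaluated from the elementary moments $I_k=\frac{1}{2\pi}\int_0^{2\pi}\cos^k\EE\,(1+e\cos\EE)^{-1}\,\mathrm{d}\EE$ through the recursion $eI_{k+1}=\frac{1}{2\pi}\int_0^{2\pi}\cos^k\EE\,\mathrm{d}\EE-I_k$ started from $I_0=(1-e^2)^{-1/2}$.

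Next I verify that the coordinates are orthogonal, i.e. that the three off-diagonal entries vanish. Within $F_2$ the $\omega$-component carries a factor $\sin v$ while the $n$- and $e$-components depend on $v$ only through $\cos v$; within $F_1$ the $e$-component carries $\sin v$, the $\omega$-component depends only on $\cos v$, and the $n$-component is zero. Hence the products $F_2^nF_2^\omega$, $F_1^eF_1^\omega$ and $F_2^eF_2^\omega$ are all odd in $v$, so the entries $p_np_\omega$ and $p_ep_\omega$ vanish over a full period by parity. The remaining coupling $p_np_e$ comes only from $F_2^nF_2^e$ and is even, but it still integrates to zero because $\int_0^{2\pi}(e+\cos v)(1+e\cos v)^{-2}\,\mathrm{d}v=0$. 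Thus the averaged form is diagonal and $(n,e,\omega)$ are orthogonal.

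It then remains to collect the three diagonal coefficients and to reproduce \eqref{eq:30} up to the positive scalar announced in the statement (the exact constant depending on the normalisation of the independent variable used in \cite{Bonn-Cai09forum}); the $1/e^2$ in the $p_\omega^2$ term is inherited from the $1/e$ prefactor of \eqref{eq:3} and accounts for the singularity at $e=0$, where $\omega$ is not defined for a circular orbit. Finally the Riemannian metric $g$ is read off as the inverse of this diagonal cometric, which for a diagonal form amounts to taking the reciprocal of each coefficient. I expect the only delicate point to be the $e$-integrals for the $p_e^2$ and $p_\omega^2$ entries: individually the moments $I_k$ carry $\sqrt{1-e^2}$ factors, and the content of the computation is precisely that these combine so as to leave rational coefficients in $e^2$ --- the passage to the eccentric anomaly being exactly what makes this cancellation explicit.
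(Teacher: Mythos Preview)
The paper does not give its own proof of this proposition; it is stated as a result recalled from \cite{Bonn-Cai09forum}, so there is nothing to compare against beyond the standard quadrature computation, which is exactly what you outline. Your plan is correct: reading the columns $F_1,F_2$ from \eqref{eq:1}--\eqref{eq:3}, passing to the eccentric anomaly via \eqref{eq:20} to rationalise the integrands, and using the parity structure in $v$ to kill the $p_np_\omega$ and $p_ep_\omega$ couplings is precisely how this result is obtained. Your observation that the even $p_np_e$ coupling vanishes because $\int_0^{2\pi}(e+\cos v)(1+e\cos v)^{-2}\,\mathrm{d}v=0$ is correct (the integrand is an exact derivative of $\sin v/(1+e\cos v)$).

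One remark on your final step: when you invert the diagonal cometric you will find that the $\mathrm{d}\omega^2$ coefficient of $g$ carries a factor $e^2$ in the numerator, i.e.\ $\frac{2e^2 n^{5/3}}{5-4e^2}\,\mathrm{d}\omega^2$, not $\frac{2 n^{5/3}}{5-4e^2}\,\mathrm{d}\omega^2$ as printed in the proposition. This is a misprint in the statement, as one can confirm from the next proposition in the paper: there $G(\varphi)=\frac{5\sin^2\varphi}{1+4\cos^2\varphi}=\frac{5e^2}{5-4e^2}$, and unwinding the change of variables $r=\frac{2}{5}n^{5/6}$, $\varphi=\arcsin e$ reproduces the $e^2$ factor. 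So your method is right and will in fact detect the typo.
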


 Further normalizations are necessary to capture the main properties
of the averaged orbital transfer.

\begin{proposition}
In the elliptic domain we set 
\[
r=\frac{2}{5}n^{5/6},\varphi=\arcsin e\]
and the metric is isometric to 
\[
g=\mathrm{d}r^{2}+\frac{r^{2}}{c^{2}}(\mathrm{d}\varphi^{2}+G(\varphi)\mathrm{d}\omega^{2})\]
where $c=\sqrt{2/5}$ and $G(\varphi)=\frac{5\sin ^{2}\varphi}{1+4\cos ^{2}\varphi}.$ 
\end{proposition}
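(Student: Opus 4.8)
The plan is to prove this by a direct change of coordinates in the metric $g$ of the previous proposition, the new coordinates $(r,\varphi,\omega)$ being designed precisely so as to recast $g$ in the announced warped-product (cone-like) form $\mathrm{d}r^2+\frac{r^2}{c^2}(\cdots)$. No qualitative or global argument is needed: the statement is an identity between two expressions for the same metric, so it suffices to transform each of the three diagonal blocks and check that they match. Since the metric is diagonal in $(n,e,\omega)$ and the coordinate change affects $n$ and $e$ separately while leaving $\omega$ untouched, the three blocks can be handled independently.

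First I would differentiate the coordinate changes. From $r=\frac25 n^{5/6}$ one gets $\mathrm{d}r=\frac13 n^{-1/6}\mathrm{d}n$, hence $\mathrm{d}r^2=\frac{1}{9n^{1/3}}\mathrm{d}n^2$, which is exactly the $n$-block of $g$; this is the whole point of the chosen power $n^{5/6}$ and the constant $\frac25$, as it turns that block into a bare $\mathrm{d}r^2$ with coefficient $1$. From $\varphi=\arcsin e$ one gets $e=\sin\varphi$, $\mathrm{d}e=\cos\varphi\,\mathrm{d}\varphi$ and $1-e^2=\cos^2\varphi$, so that $\frac{\mathrm{d}e^2}{1-e^2}=\mathrm{d}\varphi^2$ and the $e$-block becomes $\frac{2n^{5/3}}{5}\mathrm{d}\varphi^2$. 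Next I would eliminate $n$ in favour of $r$ via $n^{5/3}=\frac{25}{4}r^2$, which gives $\frac{2n^{5/3}}{5}=\frac52 r^2=\frac{r^2}{c^2}$ with $c^2=2/5$; this simultaneously produces the common prefactor $\frac{r^2}{c^2}$ of both angular blocks and pins down $c=\sqrt{2/5}$.

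It then remains to treat the $\omega$-block. Here I would use the identity $5-4e^2=1+4\cos^2\varphi$ to rewrite its denominator, and I would keep careful track of the factor $e^2=\sin^2\varphi$ carried by its numerator. This factor is dictated by the $\frac{5-4e^2}{e^2}\,p_\omega^2$ term inside the bracket of $H_\mathrm{e}$: inverting that diagonal cometric (up to the positive scalar already present in $g$) makes the $\mathrm{d}\omega^2$ coefficient of $g$ equal to $\frac{2n^{5/3}e^2}{5-4e^2}$. Factoring out $\frac{2n^{5/3}}{5}=\frac{r^2}{c^2}$ then leaves $\frac{5\sin^2\varphi}{1+4\cos^2\varphi}$, which is precisely $G(\varphi)$, and assembling the three blocks yields the claimed form.

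The computation carries no genuine obstacle; the only point requiring care — and, I suspect, the source of an apparent discrepancy with the displayed metric of the previous proposition — is the bookkeeping of the $\sin^2\varphi$ (equivalently $e^2$) factor in the $\omega$-block: dropping it would give $G=\frac{5}{1+4\cos^2\varphi}$ instead of $\frac{5\sin^2\varphi}{1+4\cos^2\varphi}$. For safety I would therefore read the $\omega$-coefficient directly off the Hamiltonian $H_\mathrm{e}$ rather than off the written metric. A final consistency check is the behaviour at $e=0$, where $G(\varphi)\to 0$, in agreement with $\omega$ being undefined for circular orbits and with the coordinate singularity already noted.
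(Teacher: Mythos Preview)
Your proof is correct and is exactly the direct coordinate computation one expects; the paper itself gives no proof for this proposition, as it is recalled from \cite{Bonn-Cai09forum}. You have also correctly diagnosed and resolved the misprint in the displayed metric of the previous proposition: the $\mathrm{d}\omega^2$ coefficient should indeed carry an extra factor $e^2$, as dictated by the $\frac{5-4e^2}{e^2}p_\omega^2$ term in $H_\mathrm{e}$, and without it the claimed $G(\varphi)$ would not emerge.
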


\subsection{Transfer towards circular orbits}
As noticed in Remark~\ref{rmk:omega}, for such transfers we may ignore the cyclic variable $\omega$ and allow negative $e$. In this case, the
elliptic domain is the $\mathcal{X}$ given by \eqref{eq:33}. The metric above then reduces to 
\[
g=\mathrm{d}r^{2}+r^{2}d\psi^{2}\,,\ \ \text{with}\ \ \psi=\varphi/c\]
defined on the domain $\{(r,\psi),\,0<r<+\infty,-\frac\pi{2c}<\psi<\frac\pi{2c}\}$; it is
a polar metric isometric to the flat metric $dx^{2}+dz^{2}$
if we set $x=r\sin \psi$ and $z=r\cos \psi$. Flatness in the original coordinates can be checked
 by computing the Gauss curvature. We deduce
the following theorem:
\begin{theorem}
\label{th:droites}
The geodesics of the averaged coplanar transfer towards circular orbits
are straight lines in the domain $\mathcal{X}$ (see \eqref{eq:33}) in suitable coordinates,
namely
\[
x=\frac{2^{3/2}}{5}n^{5/6}\sin (\frac{1}{c}\arcsin e),\;z=\frac{2^{3/2}}{5}n^{5/6}\cos (\frac{1}{c}\arcsin e)\]
with $c=\sqrt{2/5}.$ Since $c<1,$ the domain is not (geodesically)
convex and the metric is not complete.
\end{theorem}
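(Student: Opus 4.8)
The plan is to read off all three assertions from the flat normal form already obtained just above the statement. Once the cyclic variable $\omega$ is discarded, the metric on $\mathcal{X}$ is $g=dr^{2}+r^{2}\,d\psi^{2}$ with $r=\frac25 n^{5/6}$ and $\psi=\frac1c\arcsin e$, and this is exactly the Euclidean metric $dx^{2}+dz^{2}$ written in polar coordinates via $x=r\sin\psi$, $z=r\cos\psi$. I would first record this as an explicit isometry $\Phi\colon(n,e)\mapsto(x,z)$ from $(\mathcal{X},g)$ onto a subset of $(\RR^{2},dx^{2}+dz^{2})$; equivalently, as the text suggests, flatness can be confirmed by checking that the Gauss curvature of $g$ vanishes identically in the $(n,e)$ coordinates. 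Since isometries carry geodesics to geodesics and the geodesics of the Euclidean plane are the straight lines, the geodesics of $(\mathcal{X},g)$ are precisely the preimages under $\Phi$ of straight lines, i.e. the curves that are affine in the coordinates $(x,z)$; this is the first assertion. (The constant $2^{3/2}/5$ rather than $2/5$ in the statement only reflects the overall positive scalar by which the Hamiltonian $H_{\mathrm{e}}$ fixes the metric: rescaling $r$ by $\sqrt2$ absorbs that factor and leaves the polar normal form $dr^{2}+r^{2}d\psi^{2}$ unchanged.)

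Next I would identify the image $\Phi(\mathcal{X})$ as an open planar sector and extract the non-convexity. As $n$ runs over $(0,\infty)$, $r=\frac25 n^{5/6}$ runs over $(0,\infty)$; as $e$ runs over $(-1,1)$, $\arcsin e$ runs over $(-\tfrac\pi2,\tfrac\pi2)$, so $\psi$ runs over $(-\frac{\pi}{2c},\frac{\pi}{2c})$. Hence $\Phi(\mathcal{X})=\{(r\sin\psi,r\cos\psi):r>0,\ |\psi|<\frac{\pi}{2c}\}$ is a sector of total opening angle $\frac{\pi}{c}$ with its vertex removed. The decisive point is that $c=\sqrt{2/5}<1$ forces $\frac{\pi}{c}>\pi$, so the sector occupies more than a half-plane and is not convex in $\RR^{2}$. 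To make this explicit I would choose two points near the two bounding rays, one on each side of the complementary wedge (the missing sector of angle $2\pi-\frac\pi c$, which is nonempty since $c>\tfrac12$), and note that the Euclidean segment joining them — the candidate geodesic — crosses that missing wedge and therefore leaves $\mathcal{X}$; thus $\mathcal{X}$ is not geodesically convex.

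Finally, for incompleteness I would observe that $\Phi(\mathcal{X})$ excludes its vertex $r=0$ (the locus $n=0$) and the two bounding rays ($e=\pm1$), all at finite Euclidean distance from interior points. A radial geodesic with $\psi$ constant, run towards $r\to0$, has finite length yet no limit point in $\mathcal{X}$; equivalently it produces a Cauchy sequence with no limit, so $(\mathcal{X},g)$ is not complete. I expect no serious computational obstacle here, since flatness is already furnished by the preceding normalizations and a curvature computation is at most an optional consistency check. The only genuinely substantive steps are identifying the image as a sector of opening angle $\frac{\pi}{c}$ and converting the inequality $c<1$ into the explicit failure of convexity exhibited above.
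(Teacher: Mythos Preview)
Your proposal is correct and follows essentially the same route as the paper: the paper derives the flat polar form $g=\mathrm{d}r^{2}+r^{2}\,\mathrm{d}\psi^{2}$ on the sector $\{0<r<\infty,\ |\psi|<\pi/(2c)\}$ immediately before the theorem and then simply states the conclusions, so you are filling in precisely the details (straight-line geodesics via the isometry to the Euclidean plane, non-convexity from the opening angle $\pi/c>\pi$, incompleteness from finite-length radial geodesics) that the paper leaves implicit. Your remark on the constant $2^{3/2}/5$ versus $2/5$ is also apt, since the paper normalizes $H_{\mathrm{e}}$ only up to a positive scalar.
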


\begin{remark}[Tangential thrust]
  The properties of theorem \ref{th:droites} are still true when the thrust is only in the tangential direction except that the metric has a
  singularity at $e=1$. The formula is 
\[
g=\frac{1}{9n^{1/3}}\;\mathrm{d}n^{2}\;+\;\; \frac{(1+\sqrt{1-e^{2}})n^{5/3}}{4(1-e^{2})}\left[\frac{1}{\sqrt{1-e^{2}}} \;\, \mathrm{d}e^{2}\; + \;\; e^{2}\; \mathrm{d}\omega^{2}\right].\]
We may slightly twist the previous coordinates using $e=\sin\varphi\sqrt{1+\cos^{2}\varphi}$ to get the normal form
$\mathrm{d}r^{2}\;+\;\,(r^{2}/c_{t})(\;\mathrm{d}\varphi^{2}\; +\;  G_{t}(\varphi)\;\mathrm{d}\omega^2),c_{t}=c^{2}=2/5,G_t(\varphi)=\sin^{2}\varphi(\frac{1-(1/2)\sin ^{2}\varphi}{1-\sin
  ^{2}\varphi})^{2}.$
\end{remark}

\section{The analysis of the averaged systems for minimum time}
\label{sec:Tmin}

\subsection{The Hamiltonian}
\label{sec:Tmin:Ham}
We compute $H_\mathrm{t}$ according to \eqref{eq:15}.
The functions $H_i$, $i=1,2$ depend on $n,e,\omega$, $p_n,p_e,p_\omega,l$.
Since we only consider transfer towards a circular orbit, we set $p_\omega=0$ and define $h_1, h_2$ by
\begin{equation}
  \label{eq:18}
  h_i(n,e,p_n,p_e,v)=H_i(n,e,\omega,p_n,p_e,0,\omega+v)\ .
\end{equation}
The right-hand side does not depend on the cyclic variable $\omega$, see Remark~\ref{rmk:omega}.
From here on we will use the subscripts $1$ and $2$ to denote respectively the tangential and normal directions, rather than $t$ and $n$, for ease of notation.
From \eqref{eq:42}, we get
\begin{subequations}
    \label{eq:43}
    \begin{align}
      \label{eq:45}
      &h_1=n^{-1/3}\left(-3n\,p_n\frac{\sqrt{1+2e\cos v+e^{2}}}{\sqrt{1-e^2}}+ 2 p_e 
          \frac{(e+\cos v)\sqrt{1-e^2}}{\sqrt{1+2e\cos v+e^{2}}} \right)
\\
      &h_2=-\,n^{-1/3}\,p_e\, \frac{\sin v \,(1-e^2)^{3/2}}{(1+e\cos v)\sqrt{1+2e\cos v+e^{2}}}
    \end{align}
\end{subequations}
 Note that $\omega$ does not vary
in the integral; the integrand has period $2\pi$ with respect to either $l$ or $v$. This allows us to
make the change of variable $l=\omega+v$ in the integral in \eqref{eq:15}. In the full control case (both tangential and
normal control), the sum in \eqref{eq:15} contains two terms, and we obtain
\begin{equation}
  \label{eq:17}
  H_\mathrm{t}(n,e,p_n,p_e)=\frac{(1-e^{2})^{3/2}}{2\pi}
\!\int_0^{2\pi}
\!\!\!\!
\sqrt{\sum_{i=1}^2  h_i(n,e,p_n,p_e,v)^2}\,\frac{\mathrm{d}v}{(1+e\cos v)^{2}}\,,
\end{equation}
In the tangential thrust case it only
contains $h_1$ ---see remark~\ref{rmk:tangential}--- and we get (the superscript 1 in $H_\mathrm{t}^1$ denotes single input):
\begin{equation}
  \label{eq:17t}
  H_\mathrm{t}^1(n,e,p_n,p_e)=\frac{(1-e^{2})^{3/2}}{2\pi}
\int_0^{2\pi}\bigl|h_1(n,e,p_n,p_e,v)\bigr|\,\frac{\mathrm{d}v}{(1+e\cos v)^{2}}\ .
\end{equation}

In order to highlight some properties of these Hamiltonians, we perform a canonical change of coordinates 
$(n,e,p_n,p_e)\mapsto(\lambda,\varphi,p_\lambda,p_\varphi)$:
$$
n=e^{3\lambda}\,,\ e=\sin\varphi\,,\ p_n=\frac{p_\lambda}{3n}\,,\ p_e=\frac{p_\varphi}{\cos\varphi}
$$
followed by taking $(\rho,\psi)$ as polar coordinated for the adjoint vector $(p_\lambda,p_\varphi)$; we shall never use again
the notations $\lambda$, $p_\lambda$, $p_\varphi $ and directly write the change as
\begin{equation}
  \label{eq:19}
  3\,n\,p_n=\rho\cos\psi\,,\ \ \ \ \sqrt{1-e^2}\,p_e=\rho\sin\psi\,,\ \ \ \ e=\sin\varphi
\,,\ {\textstyle -\frac\pi2<\varphi<\frac\pi2}\,.
\end{equation}
Equations \eqref{eq:17} and \eqref{eq:17t} then yield
\begin{gather}
  \label{eq:21}
  H_\mathrm{t}(n,\sin\varphi,\frac{\rho\cos\psi}{3n},\frac{\rho\sin\psi}{\cos\varphi}) 
    =
    \rho\,n^{-1/3}\,L(\psi,\varphi)
\\
  \label{eq:21t}
  H_\mathrm{t}^1(n,\sin\varphi,\frac{\rho\cos\psi}{3n},\frac{\rho\sin\psi}{\cos\varphi}) 
    =
    \rho\,n^{-1/3}\,M(\psi,\varphi)
\end{gather}
with $L$ and $M$ some functions $\mathcal{C}\to\RR$, where $\mathcal{C}$ is the cylinder
\begin{equation}
  \label{eq:0069}
  \mathcal{C}=\{(\psi,\varphi),\;\psi\in(\RR/2\pi\ZZ),\;\varphi\in\RR,\,-\frac\pi2<\varphi<\frac\pi2\}
=\RR/2\pi\ZZ\times (-\frac\pi2,\frac\pi2)\,.
\end{equation}
The expressions of $L$ and $M$ are, taking  the eccentric anomaly $E$ as the variable of integration instead of $v$ (see \eqref{eq:20}; in particular,
${\mathrm{d}v}/(1+e\cos v)=\mathrm{d}E/\sqrt{1-e^2}$) and restricting the interval of integration from $[0,2\pi]$ to
$[0,\pi]$ because the integrand depends on $\cos E$ only:
\begin{align}
  \label{eq:26}
  &L(\psi,\varphi)
    =
    \frac{1}{\pi}\!\int_0^{\pi}\!\!
    \sqrt{\widetilde{I}(\psi,\varphi,E)} \,\mathrm{d}E\,,
\\
  \label{eq:22}
  &\widetilde{I}(\psi,\varphi,E)= \alpha^{1,1}(\varphi,\cos E)
  \cos^2\!\psi\,
% \\&\hspace{8em}
 +2\,\alpha^{1,2}(\varphi,\cos E)\,\cos\psi\sin\psi+\alpha^{2,2}(\varphi,\cos E)\sin^2\!\psi\,,
\\[1ex]
  \nonumber &\ \ \alpha^{1,1}=1-\sin^2\!\varphi\cos^2\!E\,,
\hspace{3em}
\alpha^{1,2}=-2 \cos\varphi\, (1\!-\!\sin\varphi\cos E)\cos E\,,
  \\
  \label{eq:0046}
  &\ \ \alpha^{2,2}=(1\!-\!\sin\varphi\cos E) \left(1-3\sin\varphi \cos E+3\cos^2\!E-\sin\varphi\cos^3 \!E\right)
\end{align}
and
\begin{align}
  \label{eq:26t}
  &M(\psi,\varphi)
    =
    \frac{1}{\pi}\!\int_0^{\pi}\!\!\left|\widetilde{J}(\psi,\varphi,E)\right|\,\mathrm{d}E\,,
\\
\label{eq:22t}
  &\widetilde{J}(\psi,\varphi,E)= \sqrt{\frac{1-\sin\varphi\cos E}{1+\sin\varphi\cos E}}
\,
\Bigl( ( 2\cos\varphi\sin\psi - \sin\varphi\cos\psi )\cos E -\cos\psi\Bigr)\,.
\end{align}

In the sequel we take advantage of the double homogeneity with respect to $\rho$ and $n$ displayed in \eqref{eq:21} and \eqref{eq:21t}.

\subsection{Singularities of the Hamiltonian in the single-input and two-input cases}

According to \eqref{eq:21} and \eqref{eq:21t}, the Hamiltonians $H_\mathrm{t}$ and $H_\mathrm{t}^1$, have the same
degree of smoothness as, respectively the maps $L$ and $M$.

\begin{proposition}
  \label{lem:unic0}
The maps $L:\mathcal{C}\to\RR$
and
$M:\mathcal{C}\to\RR$ are real analytic away from 
\begin{equation}
 \label{eq:38}
 \mathcal{S}=\left\{\!(\psi,\varphi),\,
\tan\psi=\frac{1+\sin\varphi}{2\cos\varphi}\right\}
\cup
\left\{\! (\psi,\varphi)\,,\ \tan\psi=\frac{-1+\sin\varphi}{2\cos\varphi}\right\}.
\end{equation}
They are both continuously differentiable on $\mathcal{C}$, but their differentials are not locally Lipschitz-continuous
on the set $\mathcal{S}$; we have the following moduli of continuity of the differentials: in a neighborhood of a point
$\xi=(\psi,\varphi)\in\mathcal{S}$, in some corrdinates and for a ``small'' $\delta$,
\begin{align}
  \label{eq:46}
  \|\mathrm{d}L(\xi+\delta)-\mathrm{d}L(\xi)\|
&\leq k\,\|\delta\|\,\ln(1/\|\delta\|)
\\
  \label{eq:46t}
  \|\mathrm{d}M(\xi+\delta)-\mathrm{d}M(\xi)\|
&\leq k\,\|\delta\|^{1/2}
\end{align}
\end{proposition}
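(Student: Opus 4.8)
The plan is to trace all the non-smoothness of $L$ and $M$ back to the square root in \eqref{eq:26} and the absolute value in \eqref{eq:26t}, and to the single place where their arguments vanish. First I would record that the integrands are real-analytic in $(\psi,\varphi,E)$ wherever $\widetilde I$ (resp.\ $\widetilde J$) does not vanish, so the only possible loss of regularity comes from the zero set of the integrand inside the interval of integration. For $M$, since the prefactor $\sqrt{(1-\sin\varphi\cos E)/(1+\sin\varphi\cos E)}$ is smooth and positive on $\mathcal C$, the zeros of $\widetilde J$ are exactly the zeros of the affine-in-$\cos E$ factor $A\cos E-\cos\psi$ with $A=2\cos\varphi\sin\psi-\sin\varphi\cos\psi$; because $E\mapsto\cos E$ is strictly monotone on $[0,\pi]$, there is at most one such zero $E_0$, and $\cos E_0=\cos\psi/A$ lands at an endpoint $E_0\in\{0,\pi\}$ precisely when $(\psi,\varphi)\in\mathcal S$ (this is the computation that produces the two curves in \eqref{eq:38}). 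For $L$ I would use that $\widetilde I$ represents $\sum_i h_i^2\ge 0$, so it vanishes only where $h_1=h_2=0$ simultaneously; since $h_2\propto\sin E$ vanishes only at $E\in\{0,\pi\}$ and $h_1|_{E=0}=0$ reduces exactly to $\tan\psi=(1+\sin\varphi)/(2\cos\varphi)$ (and $h_1|_{E=\pi}=0$ to the second branch), the zero set of $\widetilde I$ in $[0,\pi]$ is empty off $\mathcal S$ and is the single endpoint on $\mathcal S$. This identifies $\mathcal S$ for both maps at once.

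Analyticity away from $\mathcal S$ I would then obtain cheaply. Off $\mathcal S$ the function $\widetilde I$ is strictly positive on the compact interval $[0,\pi]$, so $\sqrt{\widetilde I}$ is jointly analytic and $L$ is analytic as an integral of an analytic integrand over a fixed interval. For $M$, off $\mathcal S$ either $\widetilde J$ has no zero (same argument) or it has a single transversal interior zero $E_0(\psi,\varphi)$, depending analytically on the parameters by the implicit function theorem; splitting $\int_0^\pi|\widetilde J|$ at $E_0$ into two pieces on which $\widetilde J$ keeps a fixed sign, the boundary contributions generated by differentiating in $(\psi,\varphi)$ carry the factor $\widetilde J(E_0)=0$ and drop out, leaving $\mathrm dM=\tfrac1\pi\int_0^\pi\sign(\widetilde J)\,\mathrm d\widetilde J$, an analytic expression.

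The heart of the matter is the local analysis on $\mathcal S$, and this is where I expect the main difficulty. Fix $\xi\in\mathcal S$, say on the branch with $E_0\to 0$, and introduce a coordinate $s$ transverse to $\mathcal S$. Near $E=0$ one has the expansions $h_2=c_2 E+O(E^3)$ (odd in $E$) and $h_1=\kappa s+O(E^2)$ with $\kappa,c_2\ne 0$, while for $M$ the affine factor becomes $A\cos E-\cos\psi=a-\tfrac{A}{2}E^2+O(E^4)$ with $a\sim s$. The claim is that, up to a $C^{1,1}$ (hence harmless) remainder, the singular behaviour of $L$ is modelled by $\int_0^{\epsilon}\sqrt{\kappa^2s^2+c_2^2E^2}\,\mathrm dE$ and that of $M$ by $\int_0^{\epsilon}\bigl|a-\tfrac{A}{2}E^2\bigr|\,\mathrm dE$. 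Computing these model integrals explicitly, their $s$- (resp.\ $a$-) derivatives behave like $s\ln(1/|s|)$ and like $\sqrt{|a|}$ near $0$: both extend continuously to $\mathcal S$, giving $L,M\in C^1(\mathcal C)$, while their increments over a displacement $\delta$ are $O(\|\delta\|\ln(1/\|\delta\|))$ and $O(\|\delta\|^{1/2})$, which is exactly \eqref{eq:46} and \eqref{eq:46t} and rules out local Lipschitz continuity of the differentials on $\mathcal S$. The delicate points I would have to nail down are: that the smooth prefactors and the higher-order terms in the expansions of $h_1,h_2$ contribute only remainders of strictly better regularity than the model (so the model truly governs the modulus), that both endpoints $E\in\{0,\pi\}$ and both branches of $\mathcal S$ are covered, and that the displacement $\delta$ is allowed to be fully two-dimensional on $\mathcal C$ rather than purely transverse, which forces the estimates to be expressed in terms of $\|\delta\|$ uniformly over the direction of $\delta$.
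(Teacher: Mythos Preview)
Your argument is correct and complete in outline, but it takes a genuinely different route from the paper's. The paper's proof is much terser: for $L$ it simply observes that $\mathcal S$ is the set of $(\psi,\varphi)$ for which $\widetilde I(\psi,\varphi,\cdot)$ has a zero on $[0,\pi]$, deduces analyticity off $\mathcal S$, and then \emph{cites} \cite{Bomb-Pom13} for the $C^1$ regularity and the $\|\delta\|\ln(1/\|\delta\|)$ modulus \eqref{eq:46}. For $M$ the paper does not use a local model integral at all; instead it writes $M$ piecewise in closed form (equation \eqref{eq:36}), splitting the integral at $E=\arccos R(\psi,\varphi)$ on the region $\mathcal R_2=\{|R|<1\}$, and then reads off the square-root behaviour of $M|_{\mathcal R_2}$ near $\mathcal S=\{R=\pm1\}$ directly from the singularity of $\arccos$ at $\pm1$, while $M|_{\mathcal R_1}$ extends analytically across $\mathcal S$.

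Your approach via the two explicit model integrals $\int_0^\varepsilon\sqrt{\kappa^2 s^2+c_2^2 E^2}\,\mathrm dE$ and $\int_0^\varepsilon\bigl|a-\tfrac{A}{2}E^2\bigr|\,\mathrm dE$ is more uniform and self-contained: it treats $L$ and $M$ by the same mechanism (a zero of the integrand colliding with an endpoint of the integration interval), and it does not rely on the external reference. The paper's route, by contrast, is much shorter to write down because it offloads the $L$ case entirely and exploits that $\widetilde J$ has an explicit antiderivative so that the $\arccos R$ dependence is visible without any asymptotic expansion. Both arguments identify $\mathcal S$ the same way (endpoint collision $\cos E_0=\pm1$), and both yield the same moduli of continuity; the difference is purely one of packaging.
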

\begin{proof}
  The set $\mathcal{S}$ is the set of points $(\psi,\varphi)$ such that $\widetilde{I}(\psi,\varphi,E)$ vanishes
  for some value of $E$; hence the integrand in \eqref{eq:26} is real analytic on $\mathcal{C}\setminus \mathcal{S}$ and
  so is $L$. The degree of regularity \eqref{eq:46} for $L$ at points in $\mathcal{S}$ is given in \cite{Bomb-Pom13}.

Let us now treat $M$.
It turns out that $\mathcal{S}$ is \emph{also} the border between the region
  \begin{equation}
    \label{eq:R1}
    \mathcal{R}_1 = \{(\psi,\varphi) \in \mathcal{C}\;:\;\frac{-1 + \sin\varphi}{2\cos\varphi}< \tan\psi < \frac{ 1 +
      \sin\varphi}{2\cos\varphi} \}
  \end{equation}
where the sign of $\widetilde{J}(\psi,\varphi,E)$ does not depend on $E$ and the region
  \begin{equation}
    \label{eq:R2}
    \mathcal{R}_2 = \{(\psi,\varphi) \in \mathcal{C}\;:\;\tan\psi <\frac{-1 + \sin\varphi}{2\cos\varphi} \mbox{ or } \frac{ 1 +
      \sin\varphi}{2\cos\varphi}< \tan\psi \}
  \end{equation}
where $\widetilde{J}(\psi,\varphi,E)$ vanishes for two distinct values of the angle $E$ where it changes sign; these two
values are given by $\cos E=R(\psi,\varphi)$ with
\begin{equation}
  \label{eq:35}
  R(\psi,\varphi)=
\frac {\cos\psi}{P(\psi,\varphi) }\,,\ \ \ 
P(\psi,\varphi)=2\cos\varphi\sin\psi - \sin\varphi\cos\psi 
\end{equation}
(note that \eqref{eq:R1},\eqref{eq:R2} amount to $\mathcal{R}_1 = \{ |R(\psi,\varphi)|>1\}$, $\mathcal{R}_2 = \{
|R(\psi,\varphi)|<1\}$ and $\mathcal{S}$ is the locus where $R=\pm1$). Hence \eqref{eq:26t} yields
\begin{equation}
  \label{eq:36}
  M(\psi,\varphi)=
  \begin{cases}
    \frac{-\sign\cos\psi}{\pi}\!\int_0^{\pi} \widetilde{J}(\psi,\varphi,E)\,\mathrm{d}E&\mbox{on }\mathcal{R}_1\,,
\\[1ex]
    \frac{\sign P(\psi,\varphi)}{\pi}\left(
\int_0^{\arccos R(\psi,\varphi)} \widetilde{J}(\psi,\varphi,E)\,\mathrm{d}E% \right.
% \\\left.\hspace{6em}
-
\int_{\arccos R(\psi,\varphi)}^\pi \widetilde{J}(\psi,\varphi,E)\,\mathrm{d}E
\right)&\mbox{on }\mathcal{R}_2\,.
  \end{cases}
\end{equation}
It is therefore clear that $M$ is real analytic on $\mathcal{C}\setminus \mathcal{S}=\mathcal{R}_1\cup\mathcal{R}_2$.
The singularity of $M$ on $\mathcal{S}$ is not of the type treated in \cite{Bomb-Pom13}, but it is clear above that the restriction of $M$
to $\mathcal{R}_1$ has a real analytic continuation through $\mathcal{S}$ while its restriction to $\mathcal{R}_2$, on
the contrary, behaves like a square root in a neighborhood of $\mathcal{S}$, whence \eqref{eq:46t}.
\qed
\end{proof}

The properties of the differential of the Hamiltonian are important because it is the right-hand side of the Hamiltonian equation.
Studying these singularities  more precisely is an interesting program that is not yet carried out. 

\subsection{The Hamiltonian flow}
\label{flow-full}
Let us now study the solutions of the Hamiltonian equation associated with the minimum time problem in the
full control or single control (tangential thrust) cases, namely:
\begin{equation}
  \label{eq:23}
  \dot n=\frac{\partial H_\mathrm{t}}{\partial p_n}\,,\ \dot e=\frac{\partial H_\mathrm{t}}{\partial p_e}\,,\ 
  {\dot p}_n=-\frac{\partial H_\mathrm{t}}{\partial n}\,,\ {\dot p}_e=-\frac{\partial H_\mathrm{t}}{\partial e}
\end{equation}
and
\begin{equation}\label{A4}
\dot{n} = \frac{\partial H_\mathrm{t}^1}{\partial p_n},\quad \dot{e} = \frac{\partial H_\mathrm{t}^1}{\partial p_e}, \quad \dot{p}_n = \frac{\partial H_\mathrm{t}^1}{\partial n},
\quad \dot{p}_e = \frac{\partial H_\mathrm{t}^1}{\partial e}.
\end{equation}
with $H_\mathrm{t}$ given by \eqref{eq:17} and $H_\mathrm{t}^1$ by \eqref{eq:17t}.

Specifically, we establish geodesic convexity of the elliptic domain
$\mathcal{X}$ (see \eqref{eq:33}), i.e. any two points in $\mathcal{X}$ can be joined by a extremal curve. This is
contained in the following result:
\begin{theorem}[geodesic convexity]
  \label{th:convex} \label{Th1}
  For any $(n^0,e^0)$ and $(n^1,e^1)$ in $\mathcal{X}$, there exist a time $T\geq0$ and
  a solution $[0,T]\to\mathcal{X}$, $t\mapsto(n(t),e(t),p_n(t),p_e(t))$ of \eqref{eq:23} (resp. of \eqref{A4}) such that $(n(0),e(0))=(n^0,e^0)$
  and $(n(T),e(T))=(n^1,e^1)$.
\end{theorem}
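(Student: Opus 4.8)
The plan is to exploit the double homogeneity recorded in \eqref{eq:21} to reduce the geodesic flow to a planar system, and then to read Theorem~\ref{th:convex} as a reachability statement. First I would pass to the coordinates of \eqref{eq:19}, writing $n=e^{3\lambda}$ and using the polar momentum variables $(\rho,\psi)$, so that $H_\mathrm{t}=e^{-\lambda}\rho\,L(\psi,\varphi)$ with $p_\lambda=\rho\cos\psi$ and $p_\varphi=\rho\sin\psi$. Writing out \eqref{eq:23} in these variables and reparametrising time by $\mathrm{d}s=e^{-\lambda}\mathrm{d}t$, one checks that the equations for $(\varphi,\psi)$ close up into an autonomous planar system on the cylinder $\mathcal{C}$ of \eqref{eq:0069}, while $\lambda$ (hence $n$) is recovered by a quadrature:
\begin{gather*}
\frac{\mathrm{d}\varphi}{\mathrm{d}s}=\sin\psi\,L+\cos\psi\,L_\psi\,,\qquad
\frac{\mathrm{d}\psi}{\mathrm{d}s}=-(\sin\psi\,L+\cos\psi\,L_\varphi)\,,\\
\frac{\mathrm{d}\lambda}{\mathrm{d}s}=\cos\psi\,L-\sin\psi\,L_\psi\,,
\end{gather*}
where $L_\psi,L_\varphi$ are the partials of $L$ (for the single-input flow \eqref{A4} one replaces $L$ by $M$). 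Moreover $\dot p_\lambda=H_\mathrm{t}$ is constant along extremals; together with the fact that the $(\varphi,\psi)$-equations involve neither $\lambda$ nor $\rho$, this decoupling is the concrete payoff of the homogeneity.

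With this reduction the statement becomes a reachability problem. Fixing the start $A=(n^0,e^0)$ and recalling that $\omega$ is ignorable (Remark~\ref{rmk:omega}), the extremal issued from $A$ with initial codirection $\psi_0$ reaches, at reduced time $s$, the point $\bigl(\lambda^0+\Lambda(s;\psi_0),\varphi(s;\psi_0)\bigr)$, where $\Lambda(s;\psi_0)=\int_0^s(\cos\psi\,L-\sin\psi\,L_\psi)\,\mathrm{d}s'$ is evaluated along the reduced trajectory and the true final time is $T=\int_0^{s}e^{\lambda}\,\mathrm{d}s'<\infty$. Theorem~\ref{th:convex} is thus equivalent to surjectivity of the shooting map $(\psi_0,s)\mapsto(\Lambda,\varphi)$ onto $\RR\times(-\tfrac\pi2,\tfrac\pi2)$. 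I would establish this by a continuity and connectedness argument: the image is nonempty, and one shows it is both open (the map is a submersion away from its critical points) and closed, the latter demanding control of trajectories as $s\to\infty$ and as $\varphi\to\pm\tfrac\pi2$.

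The heart of the proof is the qualitative study of the reduced planar flow. The field is $\pi$-periodic in $\psi$ and, as one checks from \eqref{eq:22}--\eqref{eq:0046}, odd under $\psi\mapsto\psi+\pi$, which also reverses the sign of $\mathrm{d}\lambda/\mathrm{d}s$; hence equilibria come in antipodal pairs driving $n$ towards $0$ and towards $\infty$ respectively. The pair $(\varphi,\psi)=(0,0),(0,\pi)$ is such an equilibrium (it gives the circular-orbit radial geodesics, along which $n$ sweeps all of $(0,\infty)$ at $e=0$), and I would locate the remaining equilibria and classify them. I would then show that the non-equilibrium trajectories sweep the full eccentricity range $(-\tfrac\pi2,\tfrac\pi2)$, and that by letting a trajectory linger near the appropriate member of an antipodal pair before reaching a prescribed $\varphi^1$, the accumulated $\Lambda$ can be made arbitrarily large of either sign --- this delivers every $n^1$ and closes the reachability argument. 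Throughout one must cross the singular set $\mathcal{S}$ of Proposition~\ref{lem:unic0}: in the two-input case $\mathrm{d}L$ is log-Lipschitz, an Osgood modulus, so the reduced field has a unique globally continuable flow through $\mathcal{S}$, whereas in the single-input case $\mathrm{d}M$ is only H\"older-$\tfrac12$ and uniqueness may fail, but Peano existence and continuation still furnish the connecting extremal.

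The main obstacle is precisely this global phase-portrait analysis --- proving the full angular sweep together with the two-sided unboundedness of $\Lambda$ --- rendered delicate by the non-Lipschitz behaviour along $\mathcal{S}$. It is also where the contrast with the energy case resides: performing the same reduction on \eqref{eq:30} yields a \emph{flat} cone of angular width $\pi/c=\pi\sqrt{5/2}>\pi$, whose straight-line geodesics vary angularly by less than $\pi$ and therefore cannot realise the full sweep, so the corresponding shooting map is not surjective. This is the mechanism behind the failure discussed in section~\ref{sec:compa}; it is the genuinely Finsler structure of the time problem that removes the obstruction.
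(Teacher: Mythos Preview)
Your reduction to the autonomous planar system on $\mathcal{C}$ is exactly what the paper does (Proposition~\ref{prop-varpsi}), and your intuition that reachability is obtained by letting trajectories linger near the saddle equilibria $(0,0)$ or $(\pi,0)$ so as to accumulate arbitrary $\Lambda$ of either sign is precisely the mechanism of the paper's proof. So the overall strategy coincides.

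The paper organises the argument differently, and more robustly, in two respects. First, rather than a two-parameter shooting map $(\psi_0,s)\mapsto(\Lambda,\varphi)$ with an open-and-closed argument, the paper fixes $\varphi^1$ and defines a one-parameter shooting function $\chi\mapsto\Lambda(\chi)$, where $\chi=\psi(0)$ and $\tfin^\chi$ is the first time at which $\varphi$ reaches $\varphi^1$; surjectivity onto $\RR$ then follows from the intermediate value theorem once one shows $\Lambda(\chi)\to\pm\infty$ as $\chi$ approaches the stable-manifold endpoints of the admissible interval (Appendix~\ref{sec-proof-thm}, cases (a)--(e)). This sidesteps the need to show your two-parameter map is a submersion (it need not be everywhere) or to prove closedness of the image directly. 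Second, the phase-portrait analysis that you correctly flag as the main obstacle is packaged as seven structural properties of the planar field (Proposition~\ref{lem-check}: symmetries, uniqueness, sign patterns of $a$ and $b$, the hyperbolic saddle, and the position of its stable and unstable manifolds relative to the curve $\{b=0\}$), from which an abstract Theorem~\ref{prop:conv} is proved; the six invariant regions cut out by the invariant manifolds drive the case analysis and a separate lemma (Lemma~\ref{lem:infini}) bounds the time spent away from the equilibria, making the ``lingering'' heuristic rigorous.

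One point to correct: in the single-input case you concede that uniqueness may fail across $\mathcal{S}$ and fall back on Peano existence. The paper does better: it checks that the reduced vector field is transverse to $\mathcal{S}$, which forces uniqueness of solutions despite the H\"older-$\tfrac12$ modulus of $\mathrm{d}M$ (proof of point~\ref{ass-cauchy} in Appendix~\ref{sec-proof-check}). This matters, because the shooting argument relies on continuous dependence of the flow on initial conditions, which in turn requires uniqueness.
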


\medskip

In order to ease the proof, let us write \eqref{eq:23} and \eqref{A4} in other coordinates.
\begin{proposition}
  \label{prop-varpsi}
  In the coordinates $(n,\varphi,\psi,\rho)$ defined by \eqref{eq:19}, and after a time re-parametrization
  \begin{equation}
    \label{Time}
    \mathrm{d}t=n^{1/3}\, \mathrm{d}\tau\,,
  \end{equation}
  equation \eqref{eq:23} (resp. equation \eqref{A4}) becomes
  \begin{equation}\label{A5}
\frac{\mathrm{d}\psi}{\mathrm{d}\tau} = a(\psi,\varphi) \,,\ \ \ 
\frac{\mathrm{d}\varphi}{\mathrm{d}\tau} = b(\psi,\varphi) \,,\ \ \ 
\frac{\mathrm{d}n}{\mathrm{d}\tau} =-\,3n\,c(\psi,\varphi) \,,
  \end{equation}  
where $a,b,c$ are given by\footnote{lower indices stand for partial derivatives}:
  \begin{align}
    \nonumber
    &a(\psi,\varphi)=-L(\psi,\varphi)\sin\psi-L_\varphi (\psi,\varphi)\cos\psi\,,
    \\
    \label{eq:0026}
    &b(\psi,\varphi)=\ \ \;L(\psi,\varphi)\sin\psi+L_\psi (\psi,\varphi)\cos\psi\,,
    \\
    \nonumber
    &c(\psi,\varphi)=\ \ \;L(\psi,\varphi)\cos\psi-L_\psi (\psi,\varphi)\sin\psi
  \end{align}
(resp. given by:
  \begin{align}
    \nonumber
    &a(\psi,\varphi)=-M(\psi,\varphi)\sin\psi-M_\varphi (\psi,\varphi)\cos\psi\,,
    \\
    \label{A2}
    &b(\psi,\varphi)=\ \ \;M(\psi,\varphi)\sin\psi+M_\psi (\psi,\varphi)\cos\psi\,,
    \\
    \nonumber
    &c(\psi,\varphi)=\ \ \;M(\psi,\varphi)\cos\psi-M_\psi (\psi,\varphi)\sin\psi \ \ \ \mbox{)}
  \end{align}
and the evolution of $\rho$ is given by:
\begin{align}
  \label{eq:34}
\rho(\tau)=&\,\rho(0)\left(\frac{n(0)}{n(\tau)}\right)^{-1/3}\frac{L(\psi(0),\varphi(0))}{L(\psi(\tau),\varphi(\tau))}
\\\label{39}
\mbox{(resp.}\ \ \ \rho(\tau) =&\, \rho(0)\left(\frac{n(0)}{n(\tau)}\right)^{-1/3}\frac{M(\psi(0),\varphi(0))}{M(\psi(\tau),\varphi(\tau))}
\ \ \ \mbox{).}
\end{align}
The ``time'' $\tau$ is related to the real time $t$ by
\begin{align}
    \label{eq:0054}
    t=&\,
    \frac{n(\tau)^{1/3}\cos\psi(\tau)}{L(\varphi(\tau),\psi(\tau))} -
    \frac{n(0)^{1/3}\cos\psi(0)}{L(\varphi(0),\psi(0))}\,.
\\\label{40}
\mbox{(resp.}\ \ \ t=&\,
\frac{n(\tau)^{1/3}\cos\psi(\tau)}{M(\psi(\tau),\varphi(\tau))} - \frac{n(0)^{1/3}\cos\psi(0)}{M(\psi(0),\varphi(0))}
\ \ \ \mbox{).}
\end{align}
\end{proposition}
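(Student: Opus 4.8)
The plan is to treat \eqref{eq:23} (resp. \eqref{A4}) as a genuine Hamiltonian system and to transport it through the two substitutions introduced just before \eqref{eq:19}, using the closed form \eqref{eq:21} (resp. \eqref{eq:21t}) of the Hamiltonian. First I would record that the change $(n,e,p_n,p_e)\mapsto(\lambda,\varphi,p_\lambda,p_\varphi)$ defined by $n=e^{3\lambda}$, $e=\sin\varphi$, $p_n=p_\lambda/(3n)$, $p_e=p_\varphi/\cos\varphi$ is \emph{canonical}: a one-line check gives $p_n\,\mathrm{d}n+p_e\,\mathrm{d}e=p_\lambda\,\mathrm{d}\lambda+p_\varphi\,\mathrm{d}\varphi$, so the Liouville form is preserved and Hamilton's equations keep their standard shape $\dot\lambda=\partial H_\mathrm{t}/\partial p_\lambda$, etc. In these variables \eqref{eq:21} reads simply $H_\mathrm{t}=\rho\,e^{-\lambda}L(\psi,\varphi)$ (and $H_\mathrm{t}^1=\rho\,e^{-\lambda}M(\psi,\varphi)$), since $n^{-1/3}=e^{-\lambda}$.

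The second substitution, the passage to polar momenta $p_\lambda=\rho\cos\psi$, $p_\varphi=\rho\sin\psi$, is \emph{not} symplectic, so I would resist reading off $\dot\rho,\dot\psi$ as partial derivatives of $H_\mathrm{t}$; instead I would compute $\dot\lambda,\dot\varphi,\dot p_\lambda,\dot p_\varphi$ first and convert afterwards. Using $\partial_{p_\lambda}\rho=\cos\psi$, $\partial_{p_\varphi}\rho=\sin\psi$, $\partial_{p_\lambda}\psi=-\sin\psi/\rho$, $\partial_{p_\varphi}\psi=\cos\psi/\rho$ and the fact that $L$ depends only on $(\psi,\varphi)$, the chain rule gives $\dot\varphi=e^{-\lambda}(L\sin\psi+L_\psi\cos\psi)$ and $\dot\lambda=e^{-\lambda}(L\cos\psi-L_\psi\sin\psi)$, while $\dot p_\lambda=-\partial_\lambda H_\mathrm{t}=\rho e^{-\lambda}L$ and $\dot p_\varphi=-\rho e^{-\lambda}L_\varphi$. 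Differentiating $p_\lambda=\rho\cos\psi$, $p_\varphi=\rho\sin\psi$ and solving the resulting $2\times2$ linear system for $\dot\rho,\dot\psi$ then yields $\dot\psi=e^{-\lambda}(-L\sin\psi-L_\varphi\cos\psi)$ and $\dot\rho=\rho e^{-\lambda}(L\cos\psi-L_\varphi\sin\psi)$. Every right-hand side carries the same factor $e^{-\lambda}=n^{-1/3}$, which is exactly what the reparametrisation $\mathrm{d}t=n^{1/3}\mathrm{d}\tau$ of \eqref{Time} removes; writing $n=e^{3\lambda}$ and $\mathrm{d}/\mathrm{d}\tau=n^{1/3}\mathrm{d}/\mathrm{d}t$ turns the $\psi$- and $\varphi$-equations into the autonomous system \eqref{A5} with $a,b$ as in \eqref{eq:0026}, and the $\lambda$-equation into $\mathrm{d}n/\mathrm{d}\tau=\pm3n\,c$, $c=L\cos\psi-L_\psi\sin\psi$ as in \eqref{eq:0026}; fixing the overall sign of this last equation by reconciling the symplectic convention with $n=e^{3\lambda}$ and the reparametrisation is the one place where the sign bookkeeping must be done with care.

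For the two remaining formulas I would avoid integrating $\dot\rho$ directly and instead use that $H_\mathrm{t}$ is autonomous, hence constant along each trajectory (the reparametrisation only rescales speed and leaves the orbit, so $H_\mathrm{t}$ is conserved in $\tau$ as well). The identity $\rho(\tau)\,n(\tau)^{-1/3}L(\psi(\tau),\varphi(\tau))=\rho(0)\,n(0)^{-1/3}L(\psi(0),\varphi(0))$, solved for $\rho(\tau)$, is precisely \eqref{eq:34}. For \eqref{eq:0054} I would note that $H_\mathrm{t}$ depends on $\lambda$ only through the factor $e^{-\lambda}$, so $\partial_\lambda H_\mathrm{t}=-H_\mathrm{t}$ and therefore $\dot p_\lambda=-\partial_\lambda H_\mathrm{t}=H_\mathrm{t}=:h$; thus $p_\lambda=3np_n$ grows at the constant rate $h$, whence $t=(p_\lambda(t)-p_\lambda(0))/h$, and substituting $p_\lambda=\rho\cos\psi$ together with $h=\rho n^{-1/3}L$ makes $\rho$ cancel and leaves $n^{1/3}\cos\psi/L$, which is \eqref{eq:0054}. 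The tangential case \eqref{A4}, \eqref{A2}, \eqref{39}, \eqref{40} is the same computation verbatim with $L$ replaced by $M$ through \eqref{eq:21t}.

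The calculation has no deep obstacle; the two real points of vigilance are that the second change of variables is non-canonical, so $\dot\rho,\dot\psi$ must be extracted from the conjugate equations rather than postulated, and that the combined sign bookkeeping of the symplectic convention, the relation $n=e^{3\lambda}$ and the time reparametrisation decides the sign of the $n$-equation. One should also keep in mind that Proposition~\ref{lem:unic0} only guarantees $L$ and $M$ to be $\mathcal{C}^1$ (and real-analytic off $\mathcal{S}$), so that \eqref{A5} and its resp. counterpart are well defined everywhere as equations, even though the existence and uniqueness of the flow across $\mathcal{S}$ is a separate matter not needed for this statement.
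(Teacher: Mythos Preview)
Your argument is correct and follows the same overall strategy as the paper---a direct change-of-variables computation---but you organise it more cleanly in two places. The paper does not invoke the canonical nature of $(n,e,p_n,p_e)\mapsto(\lambda,\varphi,p_\lambda,p_\varphi)$ explicitly: it writes $\dot\psi,\dot\varphi,\dot n$ via the chain rule from \eqref{eq:19} and \eqref{eq:23}, differentiates the identity \eqref{eq:21} with respect to $n,\varphi,\rho,\psi$, solves the resulting linear system for $\partial H_\mathrm{t}/\partial n,\partial H_\mathrm{t}/\partial e,\partial H_\mathrm{t}/\partial p_n,\partial H_\mathrm{t}/\partial p_e$ in terms of $L,L_\psi,L_\varphi$, and substitutes; your recognition that the first change is symplectic lets you work directly with the simpler Hamiltonian $\rho\,e^{-\lambda}L$ and amounts to the same linear algebra done once rather than twice. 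For \eqref{eq:0054} the paper verifies by brute differentiation that $\tfrac{\mathrm{d}}{\mathrm{d}\tau}\bigl(n^{1/3}\cos\psi/L\bigr)$ equals $\mathrm{d}t/\mathrm{d}\tau$ and integrates; your observation that $\dot p_\lambda=-\partial_\lambda H_\mathrm{t}=H_\mathrm{t}$ is constant, so that $p_\lambda=\rho\cos\psi$ is affine in $t$, is the structural reason behind that identity and avoids the calculation. For \eqref{eq:34} the paper's proof in fact gives no argument at all; your appeal to the conservation of $H_\mathrm{t}=\rho\,n^{-1/3}L$ along the flow fills that gap in one line. Your caution about the sign of the $n$-equation is warranted: carrying your own computation through yields $\mathrm{d}n/\mathrm{d}\tau=+3nc$, and indeed the identity $\tfrac{\mathrm{d}}{\mathrm{d}\tau}\bigl(n^{1/3}\cos\psi/L\bigr)=n^{1/3}$ that underlies \eqref{eq:0054} only holds with that sign, so the minus in \eqref{A5} appears to be one of the misprints the paper alludes to.
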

\begin{proof}
From \eqref{eq:19} and \eqref{eq:23} (resp. \eqref{eq:19} and \eqref{A4}), one gets
\begin{align}
\nonumber
  &\dot\psi=\frac1\rho\left(3 n \sin\psi \frac{\partial H}{\partial n} - \cos\varphi \cos\psi \frac{\partial H}{\partial
    e}\right)
-\cos\psi\sin\psi\left(\frac1n \frac{\partial H}{\partial p_n}
    +\frac{\sin\varphi}{\cos^2\!\varphi}\frac{\partial H}{\partial p_e}\right),
\\
  \label{eq:27}
&\dot\varphi=\frac1{\cos\varphi}\frac{\partial H}{\partial p_e}\,,\ \ \ \ \ \dot n=\frac{\partial H}{\partial p_n}
\end{align}
where $H$ stands for $H_\mathrm{t}$ (resp. for $H_\mathrm{t}^1$).
Differentiating \eqref{eq:21} (resp. \eqref{eq:21t}) with respect to $n,\varphi,\rho,\psi$ and solving for
$\frac{\partial H_\mathrm{t}}{\partial n}, \frac{\partial H_\mathrm{t}}{\partial e}, \frac{\partial H_\mathrm{t}}{\partial p_n}, \frac{\partial
  H_\mathrm{t}}{\partial p_e} $
(resp. for $\frac{\partial H_\mathrm{t}^1}{\partial n}$, $\frac{\partial H_\mathrm{t}^1}{\partial e}$, $\frac{\partial H_\mathrm{t}^1}{\partial p_n}$, $\frac{\partial
  H_\mathrm{t}^1}{\partial p_e} $), we obtain the latter as linear combinations of $L(\psi,\varphi)$,
$L_\varphi(\psi,\varphi)$, $L_\psi(\psi,\varphi)$ (resp. of $M(\psi,\varphi), M_\varphi(\psi,\varphi), M_\psi(\psi,\varphi)$)
with coefficients depending on $n,\varphi,\rho,\psi$; substituting these expressions into \eqref{eq:27} gives
  \begin{eqnarray*}
    \dot\psi&=& n^{-1/3}\left(-L\sin\psi-L_\varphi\cos\psi\right)\,,
    \\
    \dot\varphi&=&n^{-1/3}\left(L\sin\psi+L_\psi\cos\psi\right)\,,
    \\
    \dot n&=& -3\,n^{2/3}\left(L\cos\psi-L_\psi\sin\psi\right)
\\[1ex]
\mbox{(resp.}\ \ \ \ \ 
%  \begin{eqnarray*}
    \dot\psi&=& n^{-1/3}\left(-M\sin\psi-M_\varphi\cos\psi\right)\,,
    \\
    \dot\varphi&=&n^{-1/3}\left(M\sin\psi+M_\psi\cos\psi\right)\,,
    \\
    \dot n&=& -3\,n^{2/3}\left(M\cos\psi-M_\psi\sin\psi\right)\ \ \ \mbox{).}
  \end{eqnarray*}
With the new time $\tau$ given by \eqref{Time}, one easily deduces \eqref{A5} and the expressions \eqref{eq:0026}
(resp. \eqref{A2}) of $a,b,c$.
Finally, \eqref{A5} and \eqref{eq:0026} (resp. \eqref{A5} and \eqref{A2}) imply
$
\frac{\mathrm{d}}{\mathrm{d}\tau} \!\left(\! \frac{n^{1/3}\,\cos\psi}{L(\psi,\varphi)} \!\right) =n^{-1/3}
$
(resp. $\frac{d}{d\tau}\left(\frac{n^{1/3}\cos\psi}{M(\psi, \varphi)} \right)= n^{1/3}$), that implies 
\eqref{eq:0054} (resp. \eqref{40}) according to \eqref{Time}.\qed
\end{proof}

The first two equations in \eqref{A5} form an autonomous system of equations
in the two variables $(\psi,\varphi)\in\mathcal{C}$ that will be the core of our analysis; the third one may be
integrated and yields $n(\tau)$:
\begin{equation}
  \label{eq:29}
  n(\tau)=n(0)\exp\Bigl(-3
\int_0^\tau c(\psi(\sigma),\varphi(\sigma))\mathrm{d}\sigma\Bigr)\,.
\end{equation}
The variable $\rho$ (the magnitude of the adjoint vector) plays no role in the
evolution of the other variables, in particular the state $(n,e)$ ($e=\sin\varphi$); this is
a well-known consequence of the Hamiltonian being homogeneous of degree 1 with
respect to the adjoint vector and is anyway obvious from \eqref{A5}.

\bigskip \bigskip

%\clearpage
Let us now gather some properties of the maps $a,b,c$, \emph{i.e.} of
the differential equation \eqref{A5}, that are valid both for $a,b,c$ given by \eqref{eq:0026} and for $a,b,c$ given by
\eqref{A2}; they contain all the information to prove Theorem~\ref{Th1}.

\begin{proposition}
  \label{lem-check}The maps $a,b,c$ given by \eqref{eq:0026} satisfy the following properties
% conditions \eqref{eq:0036} through \eqref{zeq:18}  from section~\ref{sec-assumptions},
  with $\overline\sigma=0$.
The maps $a,b,c$ given by \eqref{A2} satisfy the same properties with $\overline\sigma=\arctan\frac12$.
\begin{enumerate}
\item\label{ass-sym} \textbf{\boldmath Symmetries.} For all $(\psi,\varphi)$ in $\mathcal{C}$,
  \begin{equation}
    \label{eq:0036}
    \begin{array}{l}
      a(\psi+\pi,\varphi)=-a(\psi,\varphi) \,, \\
      b(\psi+\pi,\varphi)=-b(\psi,\varphi) \,, \\
      c(\psi+\pi,\varphi)=-c(\psi,\varphi) \,,
    \end{array}
\hspace{3em}
    \begin{array}{l}
      a(-\psi,-\varphi)=-a(\psi,\varphi) \,, \\
      b(-\psi,-\varphi)=-b(\psi,\varphi) \,, \\
      c(-\psi,-\varphi)=c(\psi,\varphi) \,. 
    \end{array}
  \end{equation}
\item\label{ass-cauchy} \textbf{\boldmath Uniqueness of solutions.} 
  The following differential equation on $\mathcal{C}$:
      \begin{equation}
      \label{eq:0024}
        \dot\psi=a(\psi,\varphi)\,,\ \ \  \dot\varphi=b(\psi,\varphi)
      \end{equation}
  has, for any $(\psi^o\!,\varphi^o)\in\mathcal{C}$, a unique solution $t\mapsto(\psi(t),\varphi(t))$ such that
  $(\psi(0),\varphi(0))=(\psi^o\!,\varphi^o)$, defined on a maximum open interval of definition $(\tau^-\!,\tau^+)$. In this interval,
  $\tau^-<0<\tau^+$ where $\tau^{-}$ is such that either
  $\tau^-=-\infty$ or $\varphi(\tau^-)=\pm\frac\pi2$, and  $\tau^{+}$ is such that either $\tau^+=+\infty$ or 
  $\varphi(\tau^+)=\pm\frac\pi2$. This defines a flow $\Phi$ from an open subset of $\mathcal{C}\times\RR$ to
  $\mathcal{C}$ such that the above unique solution is 
  \begin{equation}
    \label{zeq:12}
    t\mapsto\Phi(\psi^o,\varphi^o,t)\,.
  \end{equation}
% \end{enumerate}
% \textit{Note:} according to \cite[chap. V, Theorem 2.1]{Hart82}, uniqueness of solutions and continuity of $a,b$ guarantee continuity of $\Phi$.
% \begin{enumerate}
% \stepcounter{enumi}\stepcounter{enumi}
\item \label{ass-b} \textbf{\boldmath Sign and zeroes of $b$.}
  There exists a continuous map 
\begin{equation}
  \label{zeq:13}
  Z_b:\,[0,\frac\pi2]\to(-\frac\pi2,0]
\end{equation}
\emph{continuously differentiable on the open interval}  $(0,\frac\pi2)$, such that
\begin{equation}
  \label{zeq:19}
  Z_b(0)=-\overline{\sigma}
\end{equation}
and
\begin{equation}
  \label{zeq:7}
  \left.\begin{array}{r}
    b(\psi,\varphi)=0,\\ \varphi\geq0
  \end{array}\right\}
  \ \Leftrightarrow\ 
  \begin{cases}
    \text{either}&\psi=Z_b(\varphi),\\\text{or}&\psi=\pi+Z_b(\varphi),
\\
    \mbox{or}&\varphi=0\ \ \text{and}\ \ \psi\in[-\overline{\sigma},\overline{\sigma}]\cup[\pi-\overline{\sigma},\pi+\overline{\sigma}]\,.
  \end{cases}
\end{equation}
Furthermore,
\begin{equation}
  \label{zeq:9}
  \left.\begin{array}{r}
    b(\psi,\varphi)>0,\\ \varphi\geq0
  \end{array}\right\}
  \ \Leftrightarrow\ 
  \begin{cases}\text{either}
   & \varphi>0\ \ \text{and}\ \ Z_b(\varphi)<\psi<\pi+Z_b(\varphi)\,, \\\mbox{or}&
   \varphi=0\ \ \text{and}\ \ \overline{\sigma}<\psi<\pi+\overline{\sigma} \,,
  \end{cases}
\end{equation}
\item \label{ass-a} \textbf{\boldmath Sign and zeroes of $a$.}
  One has
\begin{equation}
  \label{zeq:15}
  \begin{array}{ll}
&0<\varphi<\frac\pi2\Rightarrow a(Z_b(\varphi),\varphi)>0\,,
\hphantom{\mbox{and, if }\overline{\sigma}>0}
\\[.6ex]
&a(0,0)=0\,,
\\[1ex]
    \mbox{and, if }\overline{\sigma}>0\,,\ \ \  & -\overline{\sigma}\leq\psi<0\Rightarrow a(\psi,0)>0\,.
  \end{array}
\end{equation}

\item\label{ass-saddle} \textbf{\boldmath Hyperbolic saddle point at $(0,0)$.} The maps $a$ and $b$ are smooth in a neighborhood
  of $(0,0)$ and  
    \begin{equation}
      \label{zeq:16}
      a(0,0)=b(0,0)=0\,,\ \ \ 
    \frac{\partial a}{\partial\psi}(0,0) \,\frac{\partial b}{\partial\varphi}(0,0)-\frac{\partial a}{\partial\varphi}(0,0) \,\frac{\partial b}{\partial\psi}(0,0)<0\,.
    \end{equation}
\item \label{ass-c} \textbf{\boldmath Values of $c$ at equilibria.} 
\begin{equation}
    \label{zeq:18}
    c(0,0)=1\,,\ \ \ c(\pi,0)=-1\,.
  \end{equation}
\item \label{ass-US} \textbf{\boldmath Stable and unstable manifolds of $(0,0)$.}
   There exists continuous maps
\begin{equation}
  \label{zeq:1}
  S:\,[0,\frac\pi2]\to[-\pi,0]
\,,\ 
  U:\,[0,\frac\pi2]\to[0,\pi]
\,,
\end{equation}
\emph{continuously differentiable on the open interval} $(0,\frac\pi2)$, and a number $\overline{\sigma}$ with
\begin{equation}
  \label{zeq:3}
  U(0)=0, \ \ S(0)=-\overline{\sigma},\ \,\overline{\sigma}\geq0\,,
\end{equation}
such that
the stable and unstable manifolds of $(0,0)$ are described by 
\begin{equation}
  \label{zeq:10}
  \begin{array}{l}
    \mathcal{S}^0=\{(S(\varphi),\varphi),\,0\leq \varphi<\frac\pi2\}\cup\,[-\overline{\sigma},\overline{\sigma}]\!\times\!\{0\}
\,
    \cup \,\{(-S(-\varphi),\varphi),\,-\frac\pi2<\varphi\leq 0\}
\\
    \mathcal{U}^0=\{(U(\varphi),\varphi),\,0\leq \varphi<\frac\pi2\}\cup\{(-U(-\varphi),\varphi),\,-\frac\pi2<\varphi\leq 0\}
  \end{array}
\end{equation}
Furthermore, the zeroes of $b$ are positioned  with respect to the stable and unstable manifolds so that
  the maps $S,U,Z_b$ satisfy:
\begin{equation}
  \label{zeq:2}
  0<\varphi<\frac\pi2\Rightarrow S(\varphi)<Z_b(\varphi)<0<U(\varphi)\,.
\end{equation}
\end{enumerate}
\end{proposition}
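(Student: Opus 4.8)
The plan is to dispatch the seven items in an order in which the early, computational ones feed the later, more global ones, using the two symmetries of $L$ and $M$ to reduce every statement to $\varphi\ge0$. I would first record the symmetries of the integrands. Under $\psi\mapsto\psi+\pi$ the quadratic form $\widetilde I$ is unchanged and $\widetilde J$ only changes sign, so $L$ and $M$ are $\pi$-periodic in $\psi$; under the combined substitution $(\psi,\varphi,E)\mapsto(-\psi,-\varphi,\pi-E)$ one checks that $\alpha^{1,1},\alpha^{2,2}$ are even and $\alpha^{1,2}$ odd while the bracket of $\widetilde J$ is preserved, so changing $E\mapsto\pi-E$ in $[0,\pi]$ gives $L(-\psi,-\varphi)=L(\psi,\varphi)$ and likewise for $M$. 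Differentiating these two identities and substituting into \eqref{eq:0026} (resp. \eqref{A2}) yields item~\ref{ass-sym} directly. Item~\ref{ass-c} then follows: since $\widetilde I(0,0,E)\equiv1$ and $|\widetilde J(0,0,E)|\equiv1$ we get $L(0,0)=M(0,0)=1$, hence $c(0,0)=L(0,0)=1$, and $c(\pi,0)=-c(0,0)=-1$ by item~\ref{ass-sym}.

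Next I would treat item~\ref{ass-saddle}. Since $(0,0)$ lies in the interior of $\mathcal R_1$ and off $\mathcal S$, both $L$ and $M$ are real analytic there and I may differentiate under the integral sign. The first derivatives vanish (the integrands defining $L_\psi,L_\varphi$ and $M_\psi,M_\varphi$ have zero mean over $[0,\pi]$ at $(0,0)$), so $a(0,0)=b(0,0)=0$. A direct computation of the second derivatives, $L_{\psi\psi}=L_{\varphi\varphi}=-\tfrac12$, $L_{\psi\varphi}=1$, gives the Jacobian $\left(\begin{smallmatrix}-2&1/2\\1/2&1\end{smallmatrix}\right)$ with determinant $-9/4<0$; for $M$ one uses the $\mathcal R_1$-expression and a second-order expansion in $\varphi$ along $\psi=0$, obtaining an upper-triangular Jacobian of determinant $-2<0$. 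This establishes \eqref{zeq:16}. The local stable and unstable manifolds of item~\ref{ass-US} are then furnished by the analytic stable-manifold theorem at this hyperbolic saddle, tangent to the two eigendirections just found.

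For item~\ref{ass-cauchy} I would note that $a,b$ are real analytic, hence locally Lipschitz, on $\mathcal C\setminus\mathcal S$, so only solutions meeting $\mathcal S$ can threaten uniqueness. For the two-input field built from $L$, the modulus of continuity \eqref{eq:46} is $\omega(r)=k\,r\ln(1/r)$, which satisfies Osgood's condition $\int_0^1\mathrm{d}r/\omega(r)=+\infty$, so uniqueness holds across $\mathcal S$. For the single-input field built from $M$, the modulus \eqref{eq:46t} is merely H\"older-$\tfrac12$ and fails Osgood; here I would exploit the finer description in Proposition~\ref{lem:unic0} --- that $M$ continues analytically through $\mathcal S$ from the $\mathcal R_1$ side and carries its square-root behaviour only on the $\mathcal R_2$ side --- and show that trajectories cross $\mathcal S$ transversally, so that uniqueness survives the crossing. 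This is the step I expect to be the main obstacle.

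Finally, the sign-and-position statements (items~\ref{ass-b}, \ref{ass-a} and the ordering \eqref{zeq:2}) I would reduce to one-variable monotonicity via the identity $b=\cos^2\!\psi\,\partial_\psi\!\left(L/\cos\psi\right)$ (resp. with $M$): the zeros of $b$ are the critical points of $\psi\mapsto L/\cos\psi$, so the implicit function theorem --- seeded by the sign of the second derivative from the saddle computation --- produces the $\mathcal C^1$ branch $Z_b$ with $Z_b(0)=-\overline\sigma$, and the sign of the slope of $L/\cos\psi$ gives \eqref{zeq:7}, \eqref{zeq:9}; on $\varphi=0$ the $\mathcal R_1$ simplification $M(\psi,0)=\cos\psi$ forces $b\equiv0$ for $|\psi|<\overline\sigma$, explaining the degenerate interval and the value $\overline\sigma=\arctan\tfrac12$ in the single-input case. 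For item~\ref{ass-a} I would use that on $\{b=0\}$ the first component reduces to $a=(L_\psi-L_\varphi)\cos\psi$, so $a(Z_b(\varphi),\varphi)>0$ is the inequality $L_\psi>L_\varphi$ along $Z_b$, which I verify from the integral formulas, while on $\varphi=0$ the single-input computation gives $a(\psi,0)=-\sin2\psi>0$ for $-\overline\sigma\le\psi<0$. To complete item~\ref{ass-US} I would continue the local separatrices by the flow, using these sign facts to keep $S,U$ graphs over $\varphi$, the symmetry $\tau_2:(\psi,\varphi)\mapsto(-\psi,-\varphi)$ (flow-preserving and fixing $(0,0)$) to generate the $\varphi\le0$ branches, and a careful study of the field as $\varphi\to\tfrac\pi2$ to obtain the ordering $S(\varphi)<Z_b(\varphi)<0<U(\varphi)$. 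Together with the $M$-uniqueness, this global control of the separatrices is the delicate part of the argument.
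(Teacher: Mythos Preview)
Your overall plan matches the paper's proof closely: the symmetries, the saddle computation, the Osgood/transversality split for uniqueness, and the invariant-region argument for the separatrices are all essentially what the paper does, and your identity $b=\cos^2\!\psi\,\partial_\psi(L/\cos\psi)$ is a clean repackaging of the monotonicity argument the paper carries out by computing $\partial_\psi b$ explicitly as an integral (equation~\eqref{eq:0077}).

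The genuine gap is in item~\ref{ass-a}. You write that the inequality $L_\psi>L_\varphi$ (resp.\ the $M$ analogue) along the curve $\psi=Z_b(\varphi)$ can be ``verified from the integral formulas'', but this curve is only implicitly defined and the integrands are elliptic; the paper does \emph{not} manage an analytical proof of $a(Z_b(\varphi),\varphi)>0$ and explicitly presents it only as numerical evidence (Figures~\ref{fig:sign-of-a} and~\ref{Fig_4}). A closely related numerical step appears in item~\ref{ass-b} for the full control case: to invoke the implicit function theorem for $Z_b$ one needs $b$ smooth at its zeroes, i.e.\ that the zero set of $b$ avoids the singular set $\mathcal S$, and the paper again checks this only by plotting the function $e\mapsto b\bigl(\arctan\frac{-1+e}{2\sqrt{1-e^2}},\arcsin e\bigr)$ (Figure~\ref{fig:bsmooth}). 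Your proposal does not flag either of these points, so as written it promises a fully analytical argument where the paper itself offers only numerical verification; you should either accept the same numerical step, or supply the missing analytic estimates, which would go beyond what the paper establishes.

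A minor remark: your formula $a(\psi,0)=-\sin2\psi$ for the tangential case in $\mathcal R_1$ is correct and in fact tidier than the paper's treatment of that subcase.
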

\begin{proof}
  See Appendix~\ref{sec-proof-check}.\qed
\end{proof}

The following theorem is almost independent of the rest of the paper: it states that for any $a,b,c$
that satisfy the seven conditions established in Proposition~\ref{lem-check}, the differential equation \eqref{eq:0024}
has some properties (that will lead to geodesic convexity); the conditions are of course much more general than the two
cases considered in Proposition~\ref{lem-check}.
Theorem~\ref{th:convex} will be easily deduced from Theorem~\ref{prop:conv}. 
\begin{theorem}
  \label{prop:conv} 
  If $a,b,c$ satisfy the properties of Proposition~\ref{lem-check}, i.e. \eqref{eq:0036} through \eqref{zeq:18}, then, 
  for any $\varphi^0$ and $\varphi^1$ in the interval $(-\pi/2,\pi/2)$ and any $\bar\lambda\in\RR$,
  there exists $\tfin\geq0$ and a solution $(\psi(.),\varphi(.)):[0,\tfin]\to\mathcal{C}$ of
  \eqref{eq:0024} such that
    \begin{equation}
    \label{eq:0027}
    \varphi(0)=\varphi^0\,,\ \ \ \varphi(\tfin)=\varphi^1\,,\ \ \ \int_0^{\tfin}c(\psi(\tau),\varphi(\tau))\mathrm{d}\tau=\bar\lambda\,.
  \end{equation}
\end{theorem}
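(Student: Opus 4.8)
The plan is to analyze the planar autonomous system $\dot\psi=a$, $\dot\varphi=b$ on the cylinder $\mathcal{C}$ as a phase portrait, using the seven listed properties to control the qualitative behavior of trajectories, and then to realize the prescribed boundary data $(\varphi^0,\varphi^1,\bar\lambda)$ by selecting an appropriate orbit and integrating $c$ along it. The key structural feature is the hyperbolic saddle at $(0,0)$ (property 5), whose stable and unstable manifolds $\mathcal{S}^0,\mathcal{U}^0$ (property 7) partition the cylinder; combined with the sign information on $b$ (property 3) and $a$ (property 4), these separatrices organize all trajectories. Because $c(0,0)=1$ and $c(\pi,0)=-1$ (property 6), and $c$ is continuous, the integral $\int_0^{\tfin}c\,\mathrm{d}\tau$ can be made to take a wide range of values, which is the mechanism allowing the third constraint in \eqref{eq:0027} to be met.

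First I would set up the phase portrait on the half-cylinder $\varphi\geq0$, using the symmetries in property 1 to transfer conclusions to $\varphi<0$. The sign of $b=\dot\varphi$ is given precisely by \eqref{zeq:9}: in the region bounded by the curves $\psi=Z_b(\varphi)$ and $\psi=\pi+Z_b(\varphi)$ one has $\dot\varphi>0$, so $\varphi$ increases, and it decreases outside. Since $Z_b$ lies strictly between $S$ and $0<U$ by \eqref{zeq:2}, the unstable manifold $\mathcal{U}^0$ emanating from $(0,0)$ enters the region where $\dot\varphi>0$; property 4 (specifically $a(Z_b(\varphi),\varphi)>0$ and $a(0,0)=0$ with the saddle geometry) shows that along $\mathcal{U}^0$ the longitude coordinate $\varphi$ increases monotonically toward $\pi/2$. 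Thus for any target $\varphi^1\in(0,\pi/2)$ there is a point on $\mathcal{U}^0$ with that $\varphi$-value, and symmetrically the stable manifold $\mathcal{S}^0$ supplies monotone approach from negative $\varphi$. The plan is to join $\varphi^0$ to $\varphi^1$ by first flowing along or near the unstable manifold out of the saddle and then, if needed, returning, exploiting that a trajectory can be made to spend a long time near the saddle $(0,0)$.

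The main device for hitting the prescribed value $\bar\lambda$ of $\int c\,\mathrm{d}\tau$ is the \emph{dwell time near the saddle}: because $(0,0)$ is a hyperbolic equilibrium, a trajectory passing close to it can be made to linger for an arbitrarily long parameter-time $\tau$, during which $c\approx c(0,0)=1$; hence by choosing how close the trajectory passes to the saddle one continuously tunes the accumulated $\int c\,\mathrm{d}\tau$ upward without bound. To reach large negative values one routes the trajectory near the antipodal saddle-type behavior at $(\pi,0)$, where $c(\pi,0)=-1$ (the symmetries in property 1 make $(\pi,0)$ play the mirror role). A continuity/intermediate-value argument in the one-parameter family of trajectories (parametrized by the approach distance to the saddle, or by the entry point on a transverse section) then yields every intermediate value of the integral while keeping the endpoints $\varphi^0,\varphi^1$ fixed; this is where uniqueness of solutions (property 2) is essential, since it guarantees the flow $\Phi$ is well-defined and depends continuously on initial data, so that $\int c\,\mathrm{d}\tau$ varies continuously with the chosen orbit.

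The hard part will be making the dwell-time argument rigorous: I must show that the map from (approach parameter) to (value of $\int_0^{\tfin}c\,\mathrm{d}\tau$, with $\varphi(\tfin)=\varphi^1$ enforced) is continuous and surjective onto all of $\RR$, and in particular that the integral genuinely diverges to $+\infty$ as the trajectory is aimed ever closer to the saddle while still eventually reaching $\varphi^1$. This requires a careful local analysis at the hyperbolic point — estimating the passage time past $(0,0)$ in terms of the distance to the separatrices and verifying that $c$ stays bounded away from zero with the correct sign during that passage — together with a global argument that trajectories released from near $(0,0)$ on the $\varphi>0$ side indeed sweep out the full range of target values $\varphi^1$ before escaping to $\varphi=\pm\pi/2$. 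The symmetry relations and the precise positioning \eqref{zeq:2} of the zeroes of $b$ relative to the invariant manifolds are exactly what prevent pathological trajectories and keep the construction inside $\mathcal{C}$, so the bulk of the work is assembling these local and global estimates into a single continuity-and-surjectivity statement.
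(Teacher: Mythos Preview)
Your approach is essentially the paper's: parametrize solutions with $\varphi(0)=\varphi^0$ by the initial value $\psi(0)=\chi$, define $\Lambda(\chi)=\int_0^{\tfin^\chi}c\,\mathrm{d}\tau$ where $\tfin^\chi$ is the hitting time of $\{\varphi=\varphi^1\}$, and apply the intermediate value theorem after showing $\Lambda(\chi)\to\pm\infty$ as $\chi$ approaches the two separatrices bounding the relevant invariant region. The dwell-time mechanism near the saddles $(0,0)$ and $(\pi,0)$ is exactly the right engine.

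There is, however, one ingredient you have not identified, and it is the hardest technical step. To conclude $\Lambda(\chi)\to+\infty$ as $\chi$ approaches the stable manifold of $(0,0)$, it is not enough that the trajectory spends arbitrarily long near $(0,0)$ with $c\approx1$ there: you must also bound the contribution to $\int c$ from the portion of the trajectory \emph{away} from both saddles, which is itself of unbounded duration a priori. The paper proves a separate lemma to this effect: for any $f<\pi/2$, there are fixed neighborhoods $\Omega_0,\Omega_\pi$ of the two saddles and a constant $T(f)$ such that every solution remaining in $\{|\varphi|\leq f\}$ spends total time at most $T(f)$ outside $\Omega_0\cup\Omega_\pi$. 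The proof uses the monotonicity of $\varphi$ in each invariant region (bounding its total variation), a lower bound on $|b|$ away from its zero set, and a delicate separate treatment of thin neighborhoods of the curve $\{b=0\}$ and (when $\overline\sigma>0$) of the segment of equilibrium-like behavior on $\{\varphi=0\}$. Without this uniform bound, your divergence claim for $\Lambda$ is incomplete. A secondary point: the paper splits into cases according to the signs of $\varphi^0,\varphi^1$ (reduced by symmetry to $\varphi^0\leq\varphi^1$, $\varphi^1\geq0$), because which invariant region ($E$ or $F$) and which pair of separatrices bound the $\chi$-interval depend on whether $\varphi^0<0$, $=0$, or $>0$; you will need an equivalent case analysis to define $\tfin^\chi$ and verify continuity of $\Lambda$.
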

\noindent\textit{Proof of Theorem~\ref{prop:conv}.}
  See Appendix~\ref{sec-proof-thm}.\qed
\noindent\textit{Proof of Theorem~\ref{th:convex}.}
Pick $n^0,e^0,n^1,e^1$; according to Proposition~\ref{lem-check}, Theorem~\ref{prop:conv} applies to $a,b,c$ defined
either by \eqref{eq:0026}  or by \eqref{A2}. Take
$$
\varphi^0=\arcsin e^0\,,\ \ \varphi^1=\arcsin e^1\,,\ \ \bar\lambda=-\frac13\ln\frac{n^1}{n^0}
$$
and apply this theorem.
Use \eqref{eq:29} to get $n(\tau)$ and \eqref{eq:34} or \eqref{39} to get $\rho(\tau)$ (with some arbitrary $\rho(0)$,
for instance $\rho(0)=1$) and finally
\eqref{eq:19} to get $e(\tau),p_n(\tau),p_e(\tau)$ from $\psi(\tau),\varphi(\tau),n(\tau),\rho(\tau)$. Apply the time reparametrization ($\tau\leadsto t$) given by
\eqref{eq:0054} or \eqref{40}, $T$ being deduced from $\tfin$ in the same way.
According to Proposition~\ref{prop-varpsi}, 
 the obtained $t\mapsto(n(t),e(t),p_n(t),p_e(t))$ satisfies the conclusions of Theorem~\ref{th:convex}.\qed

\subsection{Simulations}
\begin{figure}[ht!]
\centering
\includegraphics[width= 1\textwidth,height= .48\textwidth]{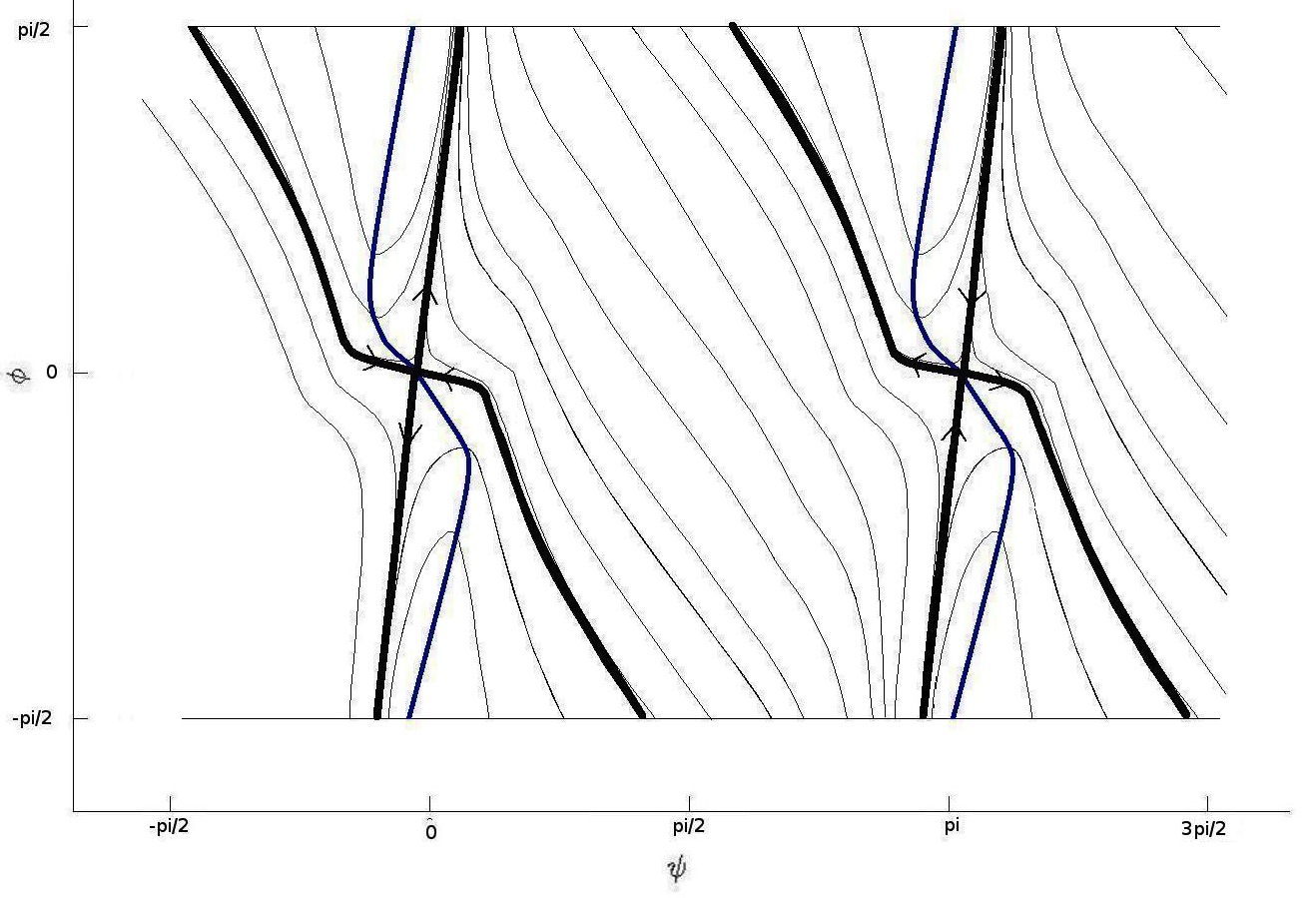}
\caption{Numerical plot (obtained using Matlab) of the stable and unstable manifolds (bold)
 and trajectories through a number of arbitrary initial values in $\mathcal{C}$ for the full control case. The other
 curve shown is $\psi=Z_b(\varphi)$. 
\label{Fig_1}
}
\end{figure}

\begin{figure}[ht!]
\centering
\includegraphics[width=1\textwidth,height= .5\textwidth]{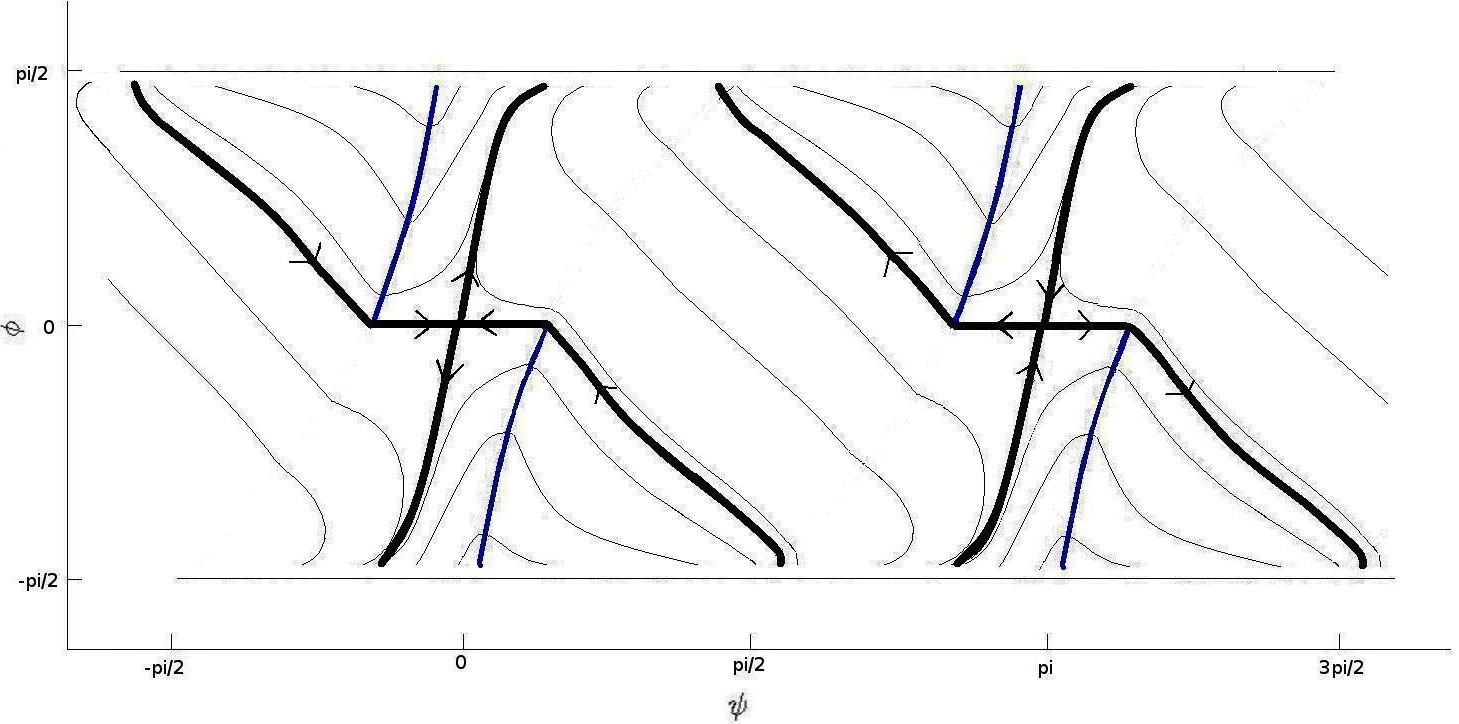}
\caption{Numerical plot (obtained using Matlab) of the stable and unstable manifolds (bold) 
 and trajectories through a number of arbitrary initial values in $\mathcal{C}$ for the tangential case. The other curve shown 
is $\psi=Z_b(\varphi)$. 
\label{Fig_2}
}
\end{figure}

A numerical simulation of the phase portrait of the differential equation \eqref{eq:0024} (or the first two equations
in \eqref{A5}) is displayed in Figure~\ref{Fig_1} in the ``full control case'' where $a$ and $b$ are given by
\eqref{eq:0026}  and in Figure~\ref{Fig_2} in the ``tangential thrust case'' where $a$ and $b$ are given by \eqref{A2}.
This is supposed to be a phase portrait on the cylinder $\mathcal{C}$ (for instance, identify $\{\psi=\frac\pi2\}$ with $\{\psi=\frac{3\pi}{2}\}$).

The thick trajectories are the stable and unstable manifolds of $(0,0)$ and $(\pi,0)$; the other thick curve is the set
of zeroes of $b(\psi,\varphi)$ (i.e. the isocline $\{\dot\varphi=0\}$).
One may check visually the properties established in Proposition~\ref{lem-check}; in particular the unstable manifold of $(0,0)$
is, in both cases, a graph $\varphi\mapsto\psi$ while the stable manifold is also such a graph in the full control case (Figure~\ref{Fig_1})
but not in the tangential thrust case (Figure~\ref{Fig_1}) where it comprises a segment of the $\psi$-axis.

 It can be seen that in both
cases, the cylinder $\mathcal{C}$ is divided into six
regions by these invariant manifolds: one region (called $F$ in Appendix~\ref{sec-proof-thm}) where  all trajectories go
``up'' ($\varphi$ is monotone increasing), one (called $F^+$ in Appendix~\ref{sec-proof-thm}) where  all
trajectories go ``down'', and four other regions (called $E$, $E^\sharp$, $E^+$ and $E^{+\sharp}$ in
Appendix~\ref{sec-proof-thm}) where all trajectories cross once the isocline $\{\dot\varphi=0\}$ so that they go up and
then down or down and then up.

This is exploited in the proof of Theorem~\ref{prop:conv}.
The generic figure \ref{fig:regions} is a drawing used to support that proof, that figures in an illustrative manner the
features contained in the assumptions of Theorem~\ref{prop:conv}, and that can also be observed in the numerical
simulations of the two cases that we are really interested in (Theorem~\ref{Th1}).

\section{Comparison between the minimum-energy and minimum-time cases from the convexity point of view}
\label{sec:compa}

In section \ref{sec:energy} we recalled  some results from \cite{Bonn-Cai09forum} (and previous work by the same
authors); in particular, Theorem~\ref{th:droites} states that the elliptic domain is
\emph{not} geodesically convex for the energy minimization problem, i.e. some pairs of points in $\mathcal{E}$ cannot be
joined by a geodesic.
In that case, in suitable coordinates ($(n^{5/6},\sqrt{5/2}\varphi)$ as polar coordinates), geodesics are straight lines
hence geodesic convexity reduces to usual (affine) convexity, thus the 
simplest way to see this non convexity is to determine the shape on the elliptic domain in these polar coordinates.

\begin{figure}[ht]
  \centering
  \includegraphics[width=0.8\textwidth]{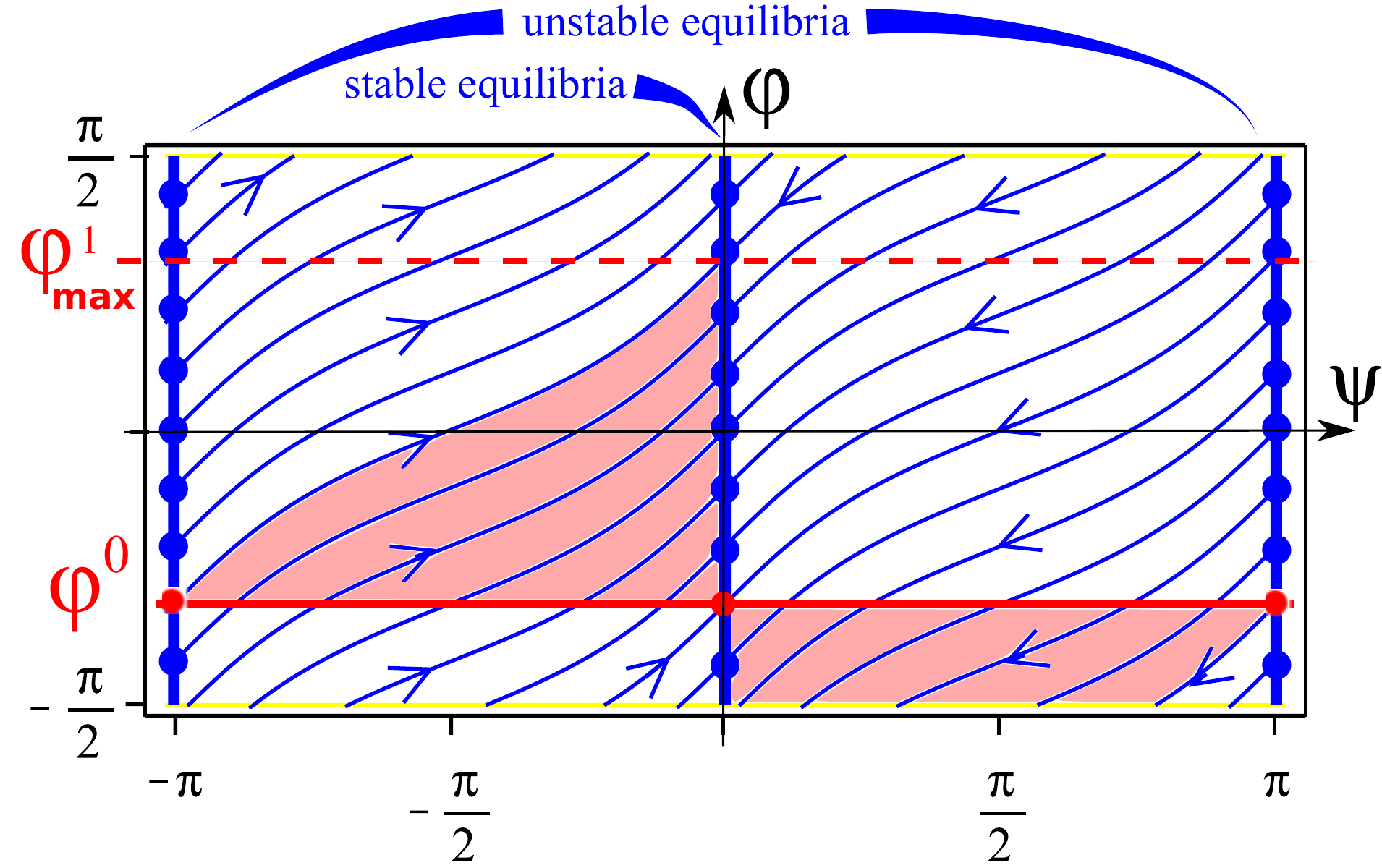}
%  \begin{minipage}{.92\linewidth}
    \caption{The phase portrait, for energy minimization, in the same coordinates as Figure \ref{Fig_1} and \ref{Fig_2}. There are
    two lines of non isolated equilibria. The darker zone is all the points that can be reached in positive time from
    the line $\{\varphi=\varphi^0\}$, with $\varphi^0$ rather close to $-\frac\pi2$. The highest possible final
    value is $\varphi$ is $\varphi^1_{\mathsf{max}}=\varphi^0+\sqrt{2/5}\,\pi$.}
    \label{fig:L2}
%  \end{minipage}
\end{figure}

Here we try to explain why convexity holds in the minimum-time case and not in the minimum-energy case.
Using the coordinates from Theorem~\ref{th:droites} for the time-minimizing problem does not seem to shed any light.
Rather, we explain how the proof of convexity that we made in the minimum-time case fails when applied to
the minimum-energy case.

When $p_\omega=0$, the Hamiltonian in the minimum-energy case is given by \eqref{eq:30} and can be written as follows
\begin{equation}
  \label{eq:39}
% e=\sin\varphi\,,\ p_\varphi=\sqrt{1-e^{2}}p_e\,:\ \ \ \ 
 H=n^{-5/3}\left[2(3n\,p_{n})^{2}+5p_{\varphi}^{2}\right] %\,.\ \ \ \ \ \ \ \ \ \ 
\end{equation}
in the coordinates
$(n,\varphi,p_n,p_\varphi)$ that result from the symplectic change of coordinates $e=\sin\varphi$, $p_\varphi=\sqrt{1-e^{2}}p_e$.
The Hamiltonian equations can be written
$$
\begin{array}{ll}
  \dot n=12 \,n^{-2/3}\,(3n\,p_{n})\,, \hspace{2em}& {\textstyle\frac{\mathrm{d}}{\mathrm{d}t}} (3n\,p_{n}) =
  5n^{-5/3}\left[2(3n\,p_{n})^{2}+5p_{\varphi}^{2}\right]\,,
\\
\dot \varphi = 10\,n^{-5/3} \,p_\varphi\,, & \hspace{3em}\dot{p}_\varphi=0\,.
\end{array}
$$
With the same polar coordinates as in \eqref{eq:19}\; (namely $\rho\cos\psi=3\,n\,p_n$, $\rho\sin\psi=-\,p_\varphi$),\; and 
the time reparametrization $\mathrm{d}t=5\,n^{-5/3}\mathrm{d}\tau$,\; the state equations of these Hamiltonian equations have the form
\begin{equation}
  \label{eq:40}
  \mathrm{d}\psi/\mathrm{d}\tau=-\sin\psi\,(2+3\sin^2\!\psi)\,,\ \ 
\mathrm{d}\varphi/\mathrm{d}\tau=2\sin\psi\,.
\end{equation}

It is easy to describe the solutions of these equations on the cylinder $\mathcal{C}$ (see \eqref{eq:0069}).
There are two lines of equilibria at $\psi=0$ and $\psi=\pi$ and 
\begin{equation}
  \label{eq:41}
  \varphi+\sqrt{\frac25}\arctan\left(\sqrt{\frac52}\tan\psi\right)
\end{equation}
is a first integral (it is smooth at $\psi=\frac\pi2$).
These solutions are drawn on Figure~\ref{fig:L2}. It is clear
that, on a solution, the maximum possible variation of the variable $\varphi$ is $\sqrt{2/5}\,\pi$;\; this implies that, if
$|\varphi^0|>(\sqrt{2/5}-\frac12)\pi$,\; there are some values of $\varphi$ that cannot be reached by any solution
starting from the line $\{\varphi=\varphi^0\}$.

\section{Conclusion and open problems}

We have studied the average minimum time problem as described in section~\ref{sec:Tmin:Ham}.
This is a reduced subproblem of the planar transfer problem: the state has dimension 2, whereas it would have
dimension 3 in the real planar problem (we have set $p_\omega=0$; this imposes that the cyclic variable is constant
along transfers) and dimension 5 in the full problem where the plane containing the orbits is not fixed.

In \cite{Bonn-Cai09forum,Bonn-Cai-Duj06b}, the energy problem in full dimension is treated; the planar case is integrable (but
only the reduced planar case is flat); the full problem is not integrable but extremals may still be computed
explicitly.
Studying minimum time in higher dimension is an interesting program.

Concerning the reduced problem considered here, the main contribution of the paper is to prove geodesic convexity of the
elliptic domain (any two points in the domain may be joined by an extremal trajectory). On the one hand, it is not clear
that this result holds true in higher dimension, and on the other hand, in the present small dimension, optimality
and/or uniqueness of the extremal trajectories has not been studied.

Finally the singularities of the Hamiltonian have been investigated roughly, mostly to ensure existence of a
Hamiltonian \emph{flow}. It would be interesting to better understand their nature and their role, in particular the
singularities they cause on the balls of small radius for the metric.

% \section*{Acknowledgements} The first and fourth authors wish to acknowledge many discussions with Prof. J.-B. Caillau
% from I. M. B., Université de Bourgogne.

\bigskip

\appendix

\section{Proof of Proposition~\ref{lem-check}}
\label{sec-proof-check}
Let us prove that the seven points in Proposition~\ref{lem-check} are satisfied by $a,b,c$ given by \eqref{eq:0026}
(full control case) and also by $a,b,c$ given by \eqref{A2} (tangential thrust case). 

\smallskip

\noindent\textit{\ref{ass-sym}. Symmetries.}
Equations \eqref{eq:22} and \eqref{eq:0046} imply
\[\widetilde{I}(\pi + \psi, \varphi, E) = - \widetilde{I}( \psi, \varphi, E),\quad 
\widetilde{I}(-\psi, -\varphi, \pi - E) = \widetilde{I}( \psi, \varphi, E)\]
while \eqref{eq:22t} implies
  \[\widetilde{J}(\psi + \pi, \varphi, E) = -\widetilde{J}(\psi, \varphi, E),\quad \widetilde{J}(-\psi, -\varphi, \pi -
  E) = \widetilde{J}(\psi, \varphi, E).\]
Substituting in \eqref{eq:26} and \eqref{eq:26t} yields, using
the change of variable $E\to\pi-E$ in the integrals $L(-\psi, -\varphi)$ and $M(-\psi, - \varphi)$, % $L(\pi + \psi, \varphi) = L(-\psi, -\varphi) = L(\psi, \varphi)$
% and $M(\pi + \psi, \varphi) = M(-\psi, - \varphi) = M(\psi, \varphi)$.
$$L(\pi + \psi, \varphi) \!=\! L(-\psi, -\varphi) \!=\! L(\psi, \varphi),\ 
M(\pi + \psi, \varphi) \!=\! M(-\psi, - \varphi) \!=\! M(\psi, \varphi).$$
This yields identities \eqref{eq:0036} with $a,b,c$ given either by \eqref{eq:0026} or by \eqref{A2}.

\smallskip

\noindent\textit{\ref{ass-cauchy}. Uniqueness of solutions.}
This follows from the classical Cauchy-Lipschitz theorem away from $\mathcal{S}$ (see Proposition~\ref{lem:unic0}). On
$\mathcal{S}$,
\\- in the full control case ($a,b$ given by \eqref{eq:0026}), as seen in \cite{Bomb-Pom13}, the regularity properties
\eqref{eq:46} of the right hand side of \eqref{eq:0024} guarantee the existence and uniqueness of solutions to the
Cauchy problem (Kamke uniqueness Theorem \cite[chap. \!III, Th. \!6.1]{Hart82}),
\\- in the tangential thrust case  ($a,b$ given by \eqref{A2}), the same argument does not apply but 
one may check that the derivative of
$\tan\psi-\frac12(\pm1+\sin\varphi)/\cos\varphi$ along $\dot\psi=a(\psi,\varphi)$, $\dot\varphi=b(\psi,\varphi)$ is
nonzero along the curve $\tan\psi=\frac12(\pm1+\sin\varphi)/\cos\varphi$, hence the vector field is 
transverse to $\mathcal{S}$ and this implies uniqueness of solutions starting
from a point in $\mathcal{S}$ (see e.g. \cite{Fili88}).

Continuity of $\Phi$ in \eqref{zeq:12}, is, according to \cite[chap. V, Theorem 2.1]{Hart82}, guaranteed by uniqueness of solutions and continuity of $a,b$.

\smallskip

\noindent\textit{\ref{ass-b}. Sign and zeroes of $b$.}

\noindent\textit{\ref{ass-b}.1. Full control case ($a,b$ given by \eqref{eq:0026}).}
On the one hand, one has
\begin{align}
  \label{eq:0075}
  b(0,\varphi) =& \frac{2\,\cos\varphi\sin\varphi}{\pi}\!\int_0^{\pi} \!\!
\frac{\cos^2\! E \,\mathrm{d}E }{ \sqrt{1-\sin^2\!\varphi\cos^2\! E }},
\ \;
b(-\pi,\varphi) =-b(0,\varphi) \,.
\end{align}
On the other hand, the derivative of $b(\psi, \varphi)$ with respect to $\psi$ is given by
  \label{lem:calculs}
\begin{align}
\label{eq:0077}
\frac{\partial b}{\partial\psi}(\psi,\varphi)
=&
\frac{\cos\psi}{\pi}\!\int_0^{\pi} \frac{(1-\sin^2\!\varphi\cos^2\! E)^{4}\,\sin^2\! E }{\widetilde{I}(\psi,\varphi,E)^{3/2}}\,\mathrm{d}E\,.
\end{align}
The integrals in both equations are positive. 
Hence for any fixed $\varphi\geq0$, $b(\psi, \varphi)$ is increasing with
respect to $\psi$ on $(-\frac\pi2,\frac\pi2)$ and decreasing on $(\frac\pi2,\frac{3\pi}2)$; 
according to \eqref{eq:0075}, it is positive in $(0,\frac\pi2]$ 
and negative in $(\pi,\frac{3\pi}2]$ (identified with $(-\pi,-\frac{\pi}2]$), 
hence it must vanish for a unique value of $\psi$ between $-\frac\pi2$ and 0, that we call $Z_b(\psi)$, thus defining $Z_b:[0,\frac\pi2)\to(-\frac\pi2,0]$.
It also vanishes for a unique value of $\psi$ between $\frac\pi2$ and $\pi$ that must be equal to $\pi+Z_b(\psi)$ according to \eqref{eq:0036}.
According to \eqref{eq:0075}, $Z_b(0)=0$ and $Z_b(0)<0$ if $\varphi>0$.

\begin{figure}[ht]
$\ $ \hfill
  \begin{minipage}[b]{0.25\linewidth}
    \caption{Numerical plot of the function \eqref{eq:32} on the interval $[0,1)$.  Obtained with Maple
      15.  \label{fig:bsmooth} }\vspace*{4\baselineskip}
  \end{minipage}\hfill
  \includegraphics[width=.4\linewidth]{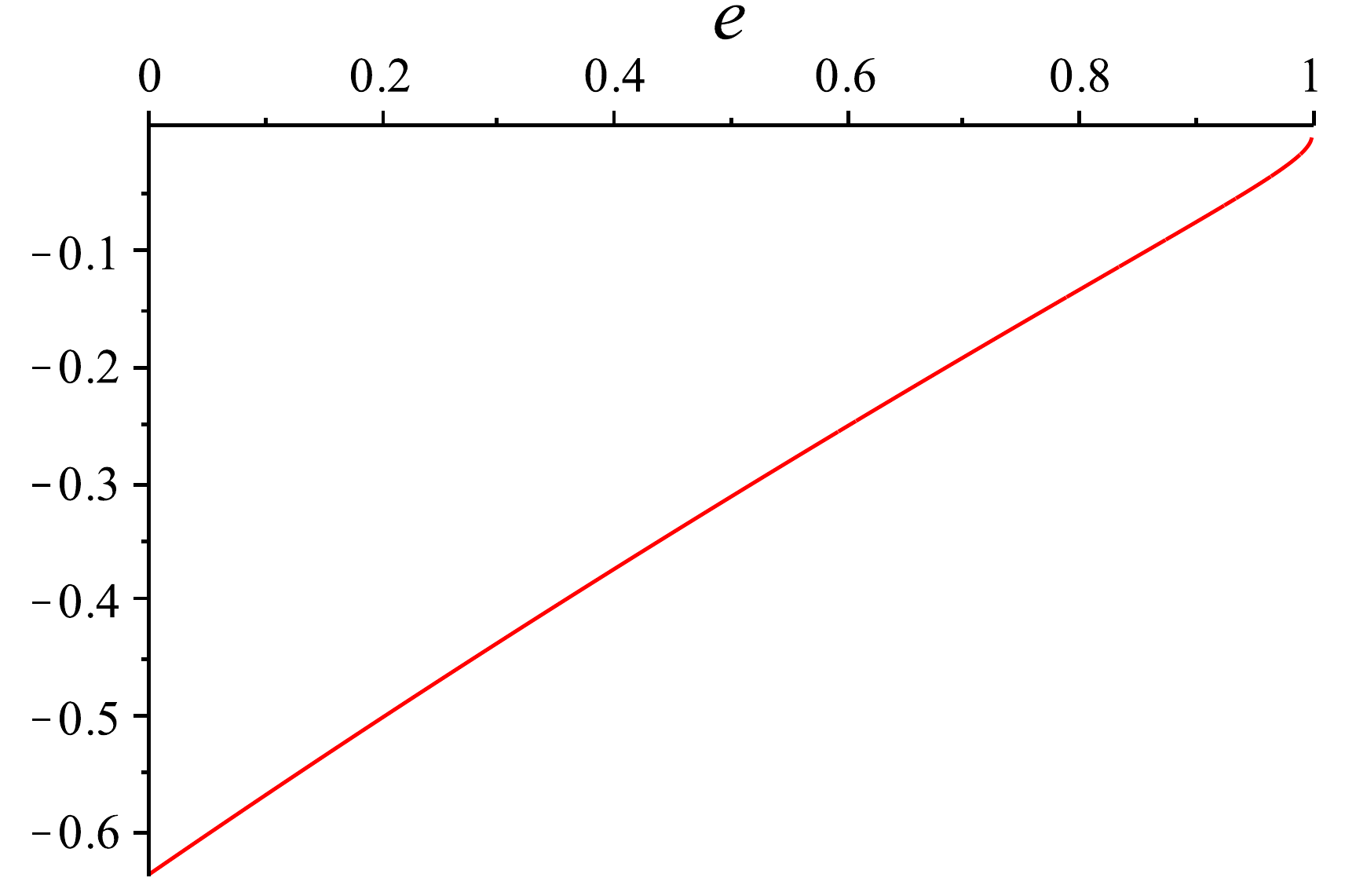}\hfill$\ $
\end{figure}

Proposition~\ref{lem:unic0} says that $L$, and hence $b$, are
smooth away from $\mathcal{S}$. The part of $\mathcal{S}$ that is contained in the square
$(-\frac\pi2,0]\times [0,\frac\pi2)$ is the curve
$\{\tan\psi\!=\!\frac{-1+\sin\varphi}{2\cos\varphi},\;0\!\leq\!\varphi\!<\!\frac\pi2\}$.
We claim that  $\gamma(\varphi)=b\left(\arctan\frac{-1+\sin\varphi}{2\cos\varphi}\,,\,\varphi\right)$ does not vanish between $0$ and
$\frac\pi2$; 
% (but $\gamma(0)=\gamma(\frac\pi2)=0$); 
this is numerically checked by plotting, on Figure~\ref{fig:bsmooth}, the graph of the
function
$e\mapsto \gamma(\arcsin e)$ on $[0,1]$, i.e.
\begin{equation}
  \label{eq:32}
  e\mapsto b\left(\arctan\frac{-1+e}{2\sqrt{1-e^2}}\,,\,\arcsin e\right)\,.
\end{equation}
On the one hand, this proves that $b$ is smooth at points where it vanishes; on the other hand the derivative of $b$ with respect to $\psi$ is (see \eqref{eq:0077}) strictly positive at
$(Z_b(\varphi),\varphi)$, $\varphi>0$. This implies smoothness of $Z_b$ according to the inverse function theorem; and
this extends to negative $\varphi$ with $Z_b(0)=0$, hence point \ref{ass-b} of the proposition is satisfied with $\bar\sigma=0$;
it is also easy to check that $b(\psi,0)$ only if $\psi=0$ or $\psi=\pi$.

% We now assume $\varphi>0$; since
% the above result on the intervals of monotonic increase and decrease of $b(\psi,\varphi)$ implies that, for each $\varphi$, $b(\psi,\varphi)$ vanishes 
% for one and only one value of $\psi$ between $-\pi$ and $0$-
% that we call $Z_b(\psi)$-
% and one and only one between $0$ and $\pi$ that must be equal to $\pi+Z_b(\psi)$ according to \eqref{eq:0036}.
% Since
% \begin{equation}
%   \label{eq:31}
%   b(-\frac\pi2,\varphi) = -\,\frac{1}{\pi}\!\int_0^{\pi} (1-\sin\varphi\cos E)^2\,\mathrm{d}E
% =-(1-\frac12\sin^2\!\varphi)^2<0\,,
% \end{equation}
% we also have $ Z_b(\psi)>-\frac\pi2$. We have proved \eqref{zeq:13}-\eqref{zeq:9} with $\overline{\sigma}=0$; it remains
%   to show that the function $\varphi$ is smooth on the open interval $(0,\frac\pi2)$  (in fact, in
% this case, it may be extended smoothly to $(-\frac\pi2,\frac\pi2)$).

% Figure~\ref{fig:bsmooth} displays the graph of 
% on the interval $(-1,1)$ and provides numerical evidence that it does not vanish, i.e. 
% $b\left(\arctan\frac{\cos\varphi}{2\,(1-\sin\varphi)}\,,\,\varphi\right)$ does not vanish on the open interval
% $(-\frac\pi2,\frac\pi2)$ (it vanishes for $\varphi=-\frac\pi2$, i.e. $e=-1$, but we stay on the open interval). The same result
% also applies to $b\left(\arctan\frac{-\cos\varphi}{2\,(1+\sin\varphi)}\,,\,\varphi\right)$, because
%   $b(-\psi,-\varphi)=-b(\psi,\varphi)$.

% \cleardoublepage

\smallskip

\noindent\textit{\ref{ass-b}.2. Tangential thrust case ($a,b$ given by \eqref{A2}).}
In the region $\mathcal{R}_1$, one has % making the change of variables $y = \cos E$,
\begin{eqnarray}
b(\psi, \varphi) &=&
\left(\sign\cos\psi\right)\;\frac{2\cos\varphi\sin\varphi}\pi\int_{0}^{\pi}\frac{\cos^2\!E}{\sqrt{1-\sin^2\varphi
    \cos^2\!E}}\mathrm{d}E
\label{E2}\,.
\end{eqnarray}
The derivative of $b(\psi,\varphi)$ with respect to $\psi$ is zero in $\mathcal{R}_1$ because the above does not depend
on $\psi$, and in $\mathcal{R}_2$ it is given by
\begin{eqnarray}
b_{\psi}(\psi,\varphi)&=&
\frac{
\sign\bigl( P(\psi,\varphi)\bigr)\;
8\cos^2\!\varphi\;R(\psi,\varphi)\,\left(1-R(\psi,\varphi)\sin\varphi\right)
}{
\sqrt{1 - R(\psi,\varphi)^2}\sqrt{1 - R(\psi,\varphi)^2\sin^2\!\varphi }\,
(2\sin\varphi \cos\psi - \cos\varphi\sin\psi)^2}\,.
 \label{B1}
\end{eqnarray}
Since $|R|<1$ on $\mathcal{R}_2$, all factors are positive except $R(\psi,\varphi)$, hence
$b_{\psi}$ vanishes in $\mathcal{R}_2$ at points where $R$ vanishes, and this is exactly, according to \eqref{eq:35}, on
the lines $\{\psi = \pm \frac{\pi}{2}\}$, so that $b_{\psi}(\psi,\varphi)$ has the
sign of $P(\psi,\varphi)R (\psi,\varphi)$, i.e. of $\cos\psi$.
Hence, for fixed $\varphi\geq0$, $b(\psi,\varphi)$ is
\\- minimum and negative for $\psi=-\pi/2$,
\\- increasing for $\psi$ in $(-\frac\pi2,\arctan\left(\frac{-1+\sin\varphi}{2\cos\varphi}\right))$,
\\- constant, positive if $\varphi>0$ and zero if $\varphi=0$, for $\psi$ in $[\arctan\left(\frac{-1+\sin\varphi}{2\cos\varphi}\right),\arctan\left(\frac{1+\sin\varphi}{2\cos\varphi}\right)]$,
\\- increasing, hence positive, for $\psi$ in $(\arctan\left(\frac{1+\sin\varphi}{2\cos\varphi}\right),\frac\pi2)$,
\\- maximum and positive for $\psi=\frac\pi2$. %.. \quad the behavior on $[\frac\pi2,\frac{3\pi}2]$ follows by symmetry \eqref{eq:0036}.
\\
Hence, for any $\varphi>0$, there is a unique $\psi$, 
$-\frac\pi2<\psi<\arctan\left(\frac{-1+\sin\varphi}{2\cos\varphi}\right)$ such that $b(\psi,\varphi)=0$; we call it
$Z_b(\varphi)$, thus defining $Z_b:(0,\frac\pi2)\to(-\frac\pi2,0)$, satisfying
$-\frac\pi2<Z_b(\varphi)<\arctan\left(\frac{-1+\sin\varphi}{2\cos\varphi}\right)$ satisfying \eqref{zeq:7}-\eqref{zeq:9} (situation on $[\frac\pi2,\frac{3\pi}2]$ by symmetry, see \eqref{eq:0036}).

Since \eqref{E2} is valid also on
$\mathcal{S}$ by continuity, $b$ does not vanish on $\mathcal{S}$ except at $\varphi=0$, hence $b$ is smooth when it
vanishes, away from 
$\varphi=0$; since we also proved that $b_\psi$
is nonzero at these points, the inverse function theorem implies that $Z_b$ is smooth on the open interval
$(0,\frac\pi2)$; also the monotonicity argument shows that $\lim_{\varphi\to0}Z_b(\varphi)=-\arctan\frac12$, hence $Z_b$
is defined $[0,\frac\pi2)\to(-\frac\pi2,0)$ with $Z_b(0)=-\arctan\frac12$. Since our considerations above for
$\varphi\geq0$ imply that $b(0,\varphi)$ is zero if and only if
$\psi\in[-\arctan\frac12,\arctan\frac12]\cup[\pi-\arctan\frac12,\pi+\arctan\frac12]$, we have proved point \ref{ass-b} of the proposition with $\bar\sigma=\arctan\frac12$.

% \bigskip

% \texttt{REALLY? I THINK THIS SHOULD BE REMOVED}
% We find it useful to subdivide the regions $\mathcal{R}_1$ and $\mathcal{R}_2$ in \eqref{eq:R1}-\eqref{eq:R2} into 
% \begin{equation*}
% \begin{tabular}{ l l }
% \(\mathcal{R}_1^{+} = \{(\psi,\varphi) \in  \mathcal{R}_1\;:\;\; \cos\psi \geq 0\},\)&\(\;\;\mathcal{R}_1^{-} = \{(\psi,\varphi)\in \mathcal{R}_1\;:\;\; \cos\psi \leq 0\},\)\\
% \(\mathcal{R}_2^{+} = \{(\psi,\varphi) \in \mathcal{R}_2\;:\; \; \; P(\psi, \varphi) > 0\},\) &\(\;\;\mathcal{R}_2^{-} = \{(\psi,\varphi) \in \mathcal{R}_2\;:\;  P(\psi, \varphi) < 0\}\)
% \end{tabular}
% \end{equation*}

\smallskip

\noindent\textit{\ref{ass-a}. Sign and zeroes of $a$.} In \eqref{zeq:15}, the part saying that $a(\psi,0)>0$ for
$-\bar\sigma\leq\psi<0$ needs no proof in the full control case because $\bar\sigma=0$ and is easy in the tangential
thrust case because, from \eqref{eq:36} and \eqref{A2}, $a(\psi,0)=(1+\cos\psi)\sin\psi$ in $\mathcal{R}_2$.

We give numerical evidence
that $a(Z_b(\varphi),\varphi)$ is positive if $0<\varphi<\frac\pi2$. Note that the map $Z_b$ can
only be determined numerically as the zero $\psi=Z_b(\varphi)$ of $b(\psi,\varphi)=0$ between $-\frac\pi2$ and $0$ for fixed $\varphi$, but the determination 
is very reliable for $b$ is monotonous with respect to $\psi=Z_b(\varphi)$ in the considered region; see
point~\ref{ass-b}.

Figure~\ref{fig:sign-of-a} displays a numerical plot of the graph of the map $\varphi \mapsto a(Z_b(\varphi),\varphi)$ in the full control
case; we also show $\varphi \mapsto Z_b(\varphi)$. Figure~\ref{Fig_4} displays a numerical plot of the graph
of $\varphi \mapsto a(Z_b(\varphi),\varphi)$ in the tangential thrust case; we also show $\varphi \mapsto Z_b(\varphi)$
and $\varphi \mapsto \arctan\left(\frac{-1+\sin\varphi}{2\cos\varphi}\right)$ to show that it is very close to $Z_b$
($\psi=\arctan\left(\frac{-1+\sin\varphi}{2\cos\varphi}\right)$ is the curve where $R(\psi,\varphi)=-1$, the border
between $\mathcal{R}_1$ and $\mathcal{R}_2$).
% \begin{figure}[ht]
% $\ $ \hfill
%   \begin{minipage}[b]{0.25\linewidth}
%     \caption{Numerical plot of the function \eqref{eq:32} on the interval $[0,1)$.  Obtained with Maple
%       15.  \label{fig:bsmooth} }\vspace*{4\baselineskip}
%   \end{minipage}\hfill
%   \includegraphics[width=.4\linewidth]{bsmooth}\hfill$\ $

\begin{figure}[ht]
$\ $ \hfill
  \begin{minipage}[b]{0.25\linewidth}
\caption{Plots (obtained with Matlab) of the maps $\varphi\mapsto Z_b(\varphi)$ (dashed line) and $\varphi \mapsto a(Z_b(\varphi),\varphi)$
      (solid line) in the full
      control case.  \label{fig:sign-of-a}   }\vspace*{4\baselineskip}
  \end{minipage}$\ $\hfill$\ $
 \includegraphics[width=.5\linewidth]{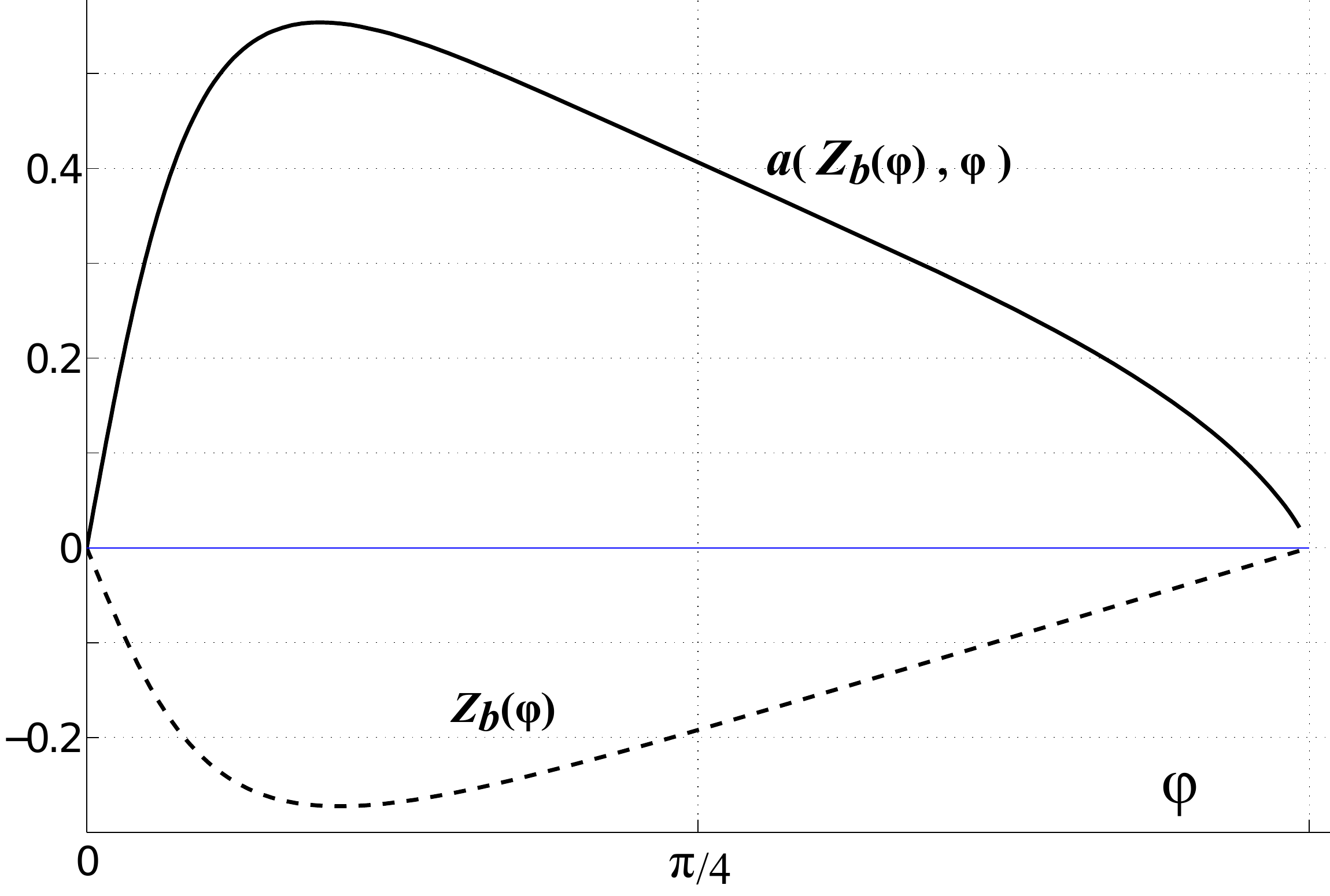}
\hfill$\ $
\end{figure}
\begin{figure}[ht]
$\ $ \hfill
  \begin{minipage}[b]{0.25\linewidth}
\caption{\label{Fig_4}The plots (obtained with Matlab) of the functions $\varphi\mapsto Z_b(\varphi)$ (dashed) and $\psi = a(Z_b(\varphi), \varphi)$ for $ \varphi \in[0, \pi/2]$. 
Note that $ a(Z_b(\varphi), \varphi)$ is everywhere positive on this interval. The other
curve shown is the curve $R(\psi, \varphi) = -1$.}\vspace*{6\baselineskip}
  \end{minipage}$\ $\hfill$\ $
 \includegraphics[width=.5\textwidth]{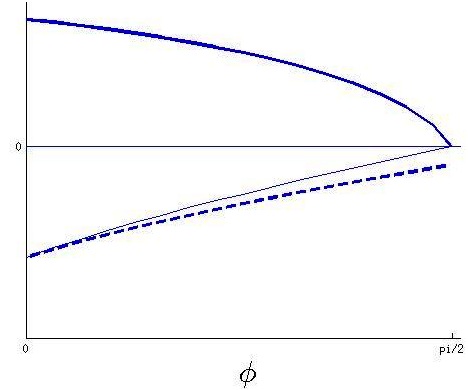}
\hfill$\ $
\end{figure}

\smallskip

\noindent\textit{\ref{ass-saddle}. Hyperbolic saddle.} Smoothness around the origin follows from
Proposition~\ref{lem:unic0}. It is clear in both cases that  $a(0,0) = 0$,  $b(0,0)= 0$.  The computation of the
Jacobians is easy (in the tangential thrust case it takes place in the region $\mathcal{R}_1$ with $\cos\psi \geq 0$,
see \eqref{E2}).

In the full control case,
\begin{equation}
  \label{eq:00107}
  \frac{\partial a}{\partial\psi}(0,0)=-2
\,,\ 
  \frac{\partial a}{\partial\varphi}(0,0)=\frac12
\,,\ 
  \frac{\partial b}{\partial\psi}(0,0)=\frac12
\,,\ 
  \frac{\partial b}{\partial\varphi}(0,0)=1
\,.
\end{equation}
The eigenvalues of the Jacobian $\bigl( \begin{smallmatrix}
  -2&1/2\\ 1/2 & 1
\end{smallmatrix} \bigr)$ are $(\sqrt{10}-1)/2$ (unstable) and $-(\sqrt{10}+1)/2$ (stable) 
associated to the eigenvectors $(\sqrt{10}-3,1)$ and $(-\sqrt{10}-3,1)$, respectively.

In the tangential thrust case,
\begin{equation}
\frac{\partial a}{\partial \psi}(0,0) = -2,\;\frac{\partial a}{\partial \varphi}(0,0) = \frac{1}{2},\;\frac{\partial b}{\partial \psi}(0,0) = 0,\;
 \frac{\partial b}{\partial \varphi}(0,0)  = 1.\label{eq:28}
\end{equation}

The eigenvalues of the Jacobian $\bigl( \begin{smallmatrix}
  -2&1/2\\ 0 & 1
\end{smallmatrix} \bigr)$ at $(0,0)$ are $1$ (unstable) and $-2$ (stable), associated to the eigenvectors $(1/6,1)$ and $(1,0)$, respectively.

\smallskip

\noindent\textit{\ref{ass-c}.  Values of $c$ at equilibria.} One deduces
$L(0,0)=L(\pi,0)=1$ from \eqref{eq:26}, \eqref{eq:22}, \eqref{eq:0046}  and
$M(0,0)=M(\pi,0)=1$ from \eqref{eq:26t}, \eqref{eq:22t}.
According to \eqref{eq:0026} and \eqref{A2}, this implies $c(0,0) = 1$, $c(0,\pi) = -1$ in both cases.

\smallskip

\noindent\textit{\ref{ass-US}. Stable and unstable manifolds of $(0,0)$.} 

\noindent\textit{\ref{ass-US}.1 full control case.}
The unstable manifold is the union of the equilibrium $(0,0)$ and two solutions that tend to $(0,0)$ as time $\tau$ tends to
$-\infty$ and are, according to \eqref{eq:00107}, both tangent to the line $\{\psi=(\sqrt{10}-3)\varphi\}$ at $(0,0)$.
One of the solutions approaches with positive $\psi$ and $\varphi$ and the other with negative $\psi$ and $\varphi$. We 
consider only the first one and call it $\tau\mapsto(\bar\psi(\tau),\bar\varphi(\tau))$, defined on the time interval
$(-\infty,\bar\tau^+)$, $\bar\tau^+\leq+\infty$; the result for the other one follows
by symmetry.  
Let $\mathcal{D}_1$ be the rectangle
$$\mathcal{D}_1=\{(\psi,\varphi)\in\mathcal{C},\;0<\psi<\pi,\,0<\varphi\}\,.$$
On the one hand, we have $\lim_{\tau\to-\infty}\bar\psi(\tau)=\lim_{\tau\to-\infty}\bar\varphi(\tau)=0$,
$\lim_{\tau\to-\infty}({\bar\psi}(\tau)/{\bar\varphi}(\tau))=\sqrt{10}-3$, hence $(\bar\psi(\tau),\bar\varphi(\tau))\in \mathcal{D}_1$ for $\tau$ close enough to $-\infty$.
On the other hand, the border of $\mathcal{D}_1$ is 
made of the two equilibria and three segments
$$
\{\psi=0,\,0<\varphi<\frac\pi2\}\,,\ \ 
\{\psi=\pi,\,0<\varphi<\frac\pi2\}\,,\ \ 
\{0<\psi<\pi,\, \varphi=0\}\,.
$$
A short computation shows that
\begin{equation}
\label{eq:0098}
a(0,\varphi) =\frac{\cos\varphi\sin\varphi}{\pi}\!\int_0^{\pi} \frac{\cos^2 E \,\mathrm{d}E }{
  \sqrt{1-\sin^2\varphi\cos^2 E }}\,,
\ \ \ a(\pi,\varphi)=-a(0,\varphi)\,,
\end{equation}
hence $a$ is positive on the first segment and negative on the second one; according to the proof of Point~\ref{ass-b}
above, $b$ is positive on the last one; hence solutions starting on these segments all enter $\mathcal{D}_1$. This proves
positive invariance of $\mathcal{D}_1$ (solutions may ``exit'' through the
segment $\{\varphi=\frac\pi2\}$, but they are no longer defined).
Hence the solution remains in $\mathcal{D}_1$ for all time in the open interval $(-\infty,\bar{\tau}^+)$.
According to Point~\ref{ass-b}, $b(\psi, \varphi)$ is positive on $\mathcal{D}_1$.
% and either $\bar{\tau}^+=+\infty$ or $\bar\varphi(\bar{\tau}^+)=\frac\pi2$.
This solution cannot remain in a compact subset of $\mathcal{C}$ for all time because then it would have a non-empty
$\omega$-limit set that would have to be a union of equilibria and periodic solutions by Poincaré-Bendixon Theorem, but
the fact that $\dot\varphi>0$ in $\mathcal{D}_1$ prevents periodic solutions from existing and 
 the only equilibria are
$(0,0)$ and $(\pi,0)$, that cannot be approached because $\bar{\varphi}(t)$ cannot become small for $\bar\varphi(\tau)$ is increasing.
Hence necessarily, $\lim_{t\to \bar{\tau}^+}\bar\varphi(t)=\frac\pi2$. 
We have established that the parametrized curve $t\mapsto(\bar\psi(t),\bar\varphi(t))$, $-\infty<t<\bar{\tau}^+$ defines the
graph of a function $\psi= U(\varphi)$, $(0,\frac\pi2)\to(0,\pi)$; it is continuously differentiable from the implicit function theorem: since the
right-hand side of the differential equation is continuous, the parameterized curve is continuously differentiable, and
we saw that the derivative of $\varphi$ with respect to the parameter (time) remains positive (again because $b>0$ in $\mathcal{D}_1$).

Let us turn to the stable manifold. It is the union of the equilibrium $(0,0)$ and two solutions that tend to $(0,0)$ as time $\tau$ tends to
$+\infty$. According to the proof of point \ref{ass-saddle}, both solutions are tangent to the line $\{\psi=-(\sqrt{10}+3)\varphi\}$ at the origin.
We consider the solution that approaches $(0,0)$ with negative $\psi$ and positive $\varphi$, and call it $\tau\mapsto(\bar\psi(\tau),\bar\varphi(\tau))$; the result for the other one
follows by symmetry.
The proof is now very similar to the one for the unstable manifold, reversing time and replacing $\mathcal{D}_1$ by the domain
$$\mathcal{D}_2=\{(\psi,\varphi)\in\mathcal{C}\,,\;-\pi<\psi<Z_b(\varphi)\,,\ \varphi>0\}\,.$$

Firstly, $b(\psi, \varphi)$ is negative in this domain.
Secondly, the solution is in this domain for $\tau$ large enough (obviously $\psi$ is negative and $\varphi$ is positive, and it
is on the right side of $\psi=Z_b(\varphi)$ because $\varphi$ tends to zero so the solution must spend some
time in the region where $b<0$).
Thirdly, the domain $\mathcal{D}_2$ is negatively invariant: its border is made of the equilibria, the segments 
$\{\psi=-\pi,\,0<\varphi< \frac\pi2\}$, 
$\{-\pi<\psi<0,\, \varphi=0\}$ and the curve $\{\psi=Z_b(\varphi), 0<\varphi< \frac\pi2\}$. Solutions which start on the
segments leave $\mathcal{D}_2$ because of \eqref{eq:0098} and the fact that that $b(\psi,0)<0$ if $-\pi<\varphi<0$ (see
Point~\ref{ass-b} above). Solutions which start on the curve leave the domain $\mathcal{D}_2$ because of Point~\ref{ass-a}
above (at these points, $Z_b$ is differentiable, $Z_b(\bar\psi(\tau),\bar\varphi(\tau))=0$,
$a(\bar\psi(\tau),\bar\varphi(\tau))>0$, $b(\bar\psi(\tau),\bar\varphi(\tau))=0$).
This with the  second point implies that the solution is in the domain for all time.
The end of the proof, i.e. definition of the continuously differentiable $S$ is exactly the same as the previous proof, only with $b<0$  instead of
$b>0$. Moreover, we get for free that $-\pi<S(\varphi)<Z_b(\varphi)$ from the definition of $\mathcal{D}_2$; this and the above
implies \eqref{zeq:2}.

\smallskip
\noindent\textit{\ref{ass-US}.2 tangential thrust case.} Let us first compute some values of $a$ and $b$ on special
lines\footnote{
   The journal version unfortunately contains some misprints in equations (79)-(80); they are
   corrected here.
}: according to \eqref{A2} and \eqref{eq:26t}-\eqref{eq:22t},
% Making the change of variables $y = \cos E$ one gets
  \begin{eqnarray}
 a\left(\frac{\pi}{2},\varphi\right) &=& -\frac{2\, \lvert\cos\varphi\rvert}{\sin\varphi}\;\ln\left(\frac{1+\sin\varphi}{1-\sin\varphi}\right)\;,\nonumber\\
 a\left(-\frac{\pi}{2},\varphi\right) &=& \hphantom{-} \frac{2\, \lvert\cos\varphi\rvert}{\sin\varphi}\;\ln\left(\frac{1+\sin\varphi}{1-\sin\varphi}\right)\;, \label{Eq1} \\
 a\left(0,\varphi\right) &=& \frac{\cos\varphi\sin\varphi}\pi\int_{0}^{\pi} \!\!\frac{\cos^2\!E\;\mathrm{d}E}{\sqrt{1-\sin^2\!\varphi \,\cos^2\!E}}\;, \nonumber
 \end{eqnarray}
\begin{equation}
\label{Eq2}
b(\psi, 0) =
\begin{cases}
 \displaystyle \sign(\sin\psi) \,\frac2 \pi \,\sqrt{4 - \cot^2\psi}&\text{if }\tan\psi\geq\frac12\;\text{(i.e. in $\mathcal{R}_{2}$)}\,,
\\
  \ \ \ 0 &\text{if }\tan\psi\leq\frac12\;\text{(i.e. in $\mathcal{R}_{1}$)}\,.
\end{cases}
% \begin{array}{rcl}
% &=& \quad\mathrm{if}\;\,, \\
% b(\psi, 0) &=& 0\quad\mathrm{in}\;\mathcal{R}_{1}\,.
%  \end{array}
\end{equation}
%The sign of these expressions is clear

The unstable manifold comprises the equilibrium point $(0,0)$ and two solutions that tend towards it as time $\tau$ tends to $-\infty$, tangent, according to \eqref{eq:28},  to the line $\{\psi = \varphi/6\} $ at $(0,0)$. 
Thus either both $\varphi$ and  $\psi$ are positive as they approach $(0,0)$ or they are both negative.
Of the two solutions, we will only consider the one where
$\psi,\varphi$ are both positive, and call it $\tau \rightarrow (\bar{\psi}(\tau), \bar{\varphi}(\tau))$.
Obviously, $(\bar{\psi}(\tau), \bar{\varphi}(\tau))$ is in the rectangle
$$\mathcal{D}_1 = \{(\psi, \varphi)\in \mathcal{C}\;:\; 0 < \psi < \pi/2, \varphi > 0\}$$
for $\tau$ negative large enough.
From Point~\ref{ass-b}.2 above, $b(\psi,\varphi) > 0$ in the whole of $\mathcal{D}_1$. 
From \eqref{Eq1} and \eqref{Eq2}, $a( 0, \varphi) > 0$, $a( \pi/2,\varphi) < 0$, and $b(\psi, 0) \geq 0$ for $0< \psi <
\pi$, $\varphi > 0$, thus $\mathcal{D}_1$ is positively invariant.
The rest of the proof follows exactly
the same argument as for the case of the unstable manifold in Point~\ref{ass-US}.1 above.

We now consider the stable manifold. 
It comprises the equilibrium point $(0,0)$ and two solutions that tend towards it as time $\tau$ tends to $+\infty$.
They are both tangent  at $(0,0)$ to the stable eigenvector, i.e. (see \eqref{eq:28} and the sequel) to the line $\{\varphi = 0\}$.
Since $\dot\varphi$, i.e. $b(\psi,\varphi)$, is zero on the segment $\{\varphi=0,\,-\arctan\frac12\leq\psi\leq \arctan\frac12\}$, these 
solutions follow this segment.
We examine the one that approaches $(0,0)$ with negative $\psi$, the other one follows by symmetry. 
Call $\tau\mapsto (\bar{\psi}(\tau), \bar{\varphi}(\tau))$ the solution such that 
$\bar{\psi}(0)=-\arctan\frac12$, $\bar{\varphi}(0)=0$. One has $\bar{\varphi}(\tau)=0$ for all positive $\tau$ and
$\bar{\psi}(\tau)$ is increasing for positive $\tau$ and tends to zero as $\tau\to+\infty$.
Define the domain $$\mathcal{D}_2 = \{(\psi, \varphi)\in \mathcal{C}\;:\;-\frac\pi2 <
\psi < Z_b(0), \varphi > 0\}\,.$$ 
The solution is outside $\mathcal{D}_2$ for positive $\tau$ but on its border at $\tau=0$.
From \eqref{Eq1}, $a(-\frac\pi2,\varphi) > 0$ for all $\varphi$ between 0 and $\frac\pi2$; 
from \eqref{Eq1}, $b(\psi,0) < 0$ for $\psi \in (-\frac\pi2,-\arctan\frac12)$ ($-\arctan\frac12$ is $ Z_b(0))$);
from point \ref{ass-a} above, $a(Z_b(\varphi), \varphi) > 0$ if $\psi \in ( 0,\frac\pi2)$. Hence $\mathcal{D}_2$, as
well as its topological closure, are negatively invariant. Since $(\bar{\psi}(0), \bar{\varphi}(0))$ is on the boundary
of $\mathcal{D}_2$, one has $(\bar{\psi}(\tau), \bar{\varphi}(\tau))\in\mathcal{D}_2$ for all $\tau\in(\tau^-,0)$ where
$(\tau^-,+\infty)$ is the maximal interval of definition of the solution we consider.
From \eqref{zeq:9} and \eqref{zeq:7}, $b(\psi,\varphi)<0$ for all  $(\psi,\varphi)$ in $\mathcal{D}_2$.
Then, following the same argument as in the proof concerning the stable manifold in Point~\ref{ass-US}.1 above, we
obtain that the restriction to negative times of the solution is the graph $\varphi=S(\psi)$ where
$S:[0,\frac\pi2)\to(-\frac\pi2,0]$ is continuously differentiable on $(0,\frac\pi2)$ and $S(0)=-\arctan\frac12$.
We already noticed that the other part of the solution covers the segment $\{\varphi=0,\,-\arctan\frac12\leq\psi<0\}$.
This ends the proof of point Point~\ref{ass-US} (\eqref{zeq:1} to \eqref{zeq:2}) in the tangential thrust case.
\qed

\section{Proof of Theorem~\ref{prop:conv}}
\label{sec-proof-thm}

Theorem~\ref{prop:conv} and this section are independent of the rest of the paper: here we only refer to the
seven conditions ranging from equation \eqref{eq:0036} to equation \eqref{zeq:2}.

\begin{lemma}
\label{lem:sym} 
 Assume that $a,b,c$ satisfy \eqref{eq:0036} (i.e. point \ref{ass-sym}).

\noindent
  \textbf{If} $\tau\mapsto(\psi(\tau),\varphi(\tau))\in\mathcal{C}$ is a solution of \eqref{eq:0024} defined on the time interval
  $[0,\tfin]$, \textbf{then} $\tau\mapsto(\psi^\sharp(\tau),\varphi^\sharp(\tau))$ and
  $\tau\mapsto(\psi^+(\tau),\varphi^+(\tau))$ with
  \begin{equation}
    \label{eq:0082}
    \begin{array}{lcl}
      \psi^\sharp(\tau)=-\psi(\tau)\,,&\ \ \ &\psi^+(\tau)=\psi(\tfin\!-\!\tau)+\pi\,,
\\
      \varphi^\sharp(\tau)=-\varphi(\tau)\,,&\ \ \ &\varphi^+(\tau)=\varphi(\tfin\!-\!\tau)\,,
    \end{array}
  \end{equation}
are also solutions of \eqref{eq:0024} on the same time interval $[0,\tfin]$ and they satisfy
\begin{equation}
  \label{eq:0085}
  \int_0^{\tfin}c(\psi^\sharp(\tau),\varphi^\sharp(\tau))\mathrm{d}\tau=\int_0^{\tfin}c(\psi^+(\tau),\varphi^+(\tau))\mathrm{d}\tau=\int_0^{\tfin}c(\psi(\tau),\varphi(\tau))\mathrm{d}\tau\,.  
\end{equation}
\end{lemma}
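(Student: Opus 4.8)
The plan is to prove all the assertions by direct substitution, reading off at each step the relevant line of the symmetry relations \eqref{eq:0036}; no uniqueness, compactness or continuation argument is needed, so everything is algebraic once the transformed curves are differentiated. There are four things to verify: that each of $(\psi^\sharp,\varphi^\sharp)$ and $(\psi^+,\varphi^+)$ solves \eqref{eq:0024} on $[0,\tfin]$, and that each carries the value of $\int_0^{\tfin}c\,\mathrm{d}\tau$ as recorded in \eqref{eq:0085}. Two structural remarks clean up the set-up. First, both transformed curves again take values in $\mathcal{C}$: $\varphi^\sharp=-\varphi$ and $\varphi^+(\tau)=\varphi(\tfin-\tau)$ stay in $(-\frac\pi2,\frac\pi2)$, while $\psi$ lives on $\RR/2\pi\ZZ$ so the shift by $\pi$ is harmless. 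Second, since $\tau\mapsto\tfin-\tau$ maps $[0,\tfin]$ onto itself, the $+$ curve is indeed defined on the same interval $[0,\tfin]$.

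For $(\psi^\sharp,\varphi^\sharp)$ I would differentiate the pointwise reflection $\psi^\sharp=-\psi$, $\varphi^\sharp=-\varphi$, obtaining $\dot\psi^\sharp=-a(\psi,\varphi)$ and $\dot\varphi^\sharp=-b(\psi,\varphi)$. The right-hand column of \eqref{eq:0036}, namely $a(-\psi,-\varphi)=-a(\psi,\varphi)$ and $b(-\psi,-\varphi)=-b(\psi,\varphi)$, rewrites these as $a(\psi^\sharp,\varphi^\sharp)$ and $b(\psi^\sharp,\varphi^\sharp)$, so \eqref{eq:0024} holds. For the cost, the companion relation $c(-\psi,-\varphi)=c(\psi,\varphi)$ shows the integrand is unchanged pointwise, which is the $\sharp$ part of \eqref{eq:0085}.

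For $(\psi^+,\varphi^+)$ the only new feature is the time reversal. Differentiating $\psi^+(\tau)=\psi(\tfin-\tau)+\pi$ and $\varphi^+(\tau)=\varphi(\tfin-\tau)$ produces, by the chain rule, an extra factor $-1$: $\dot\psi^+(\tau)=-a(\psi(\tfin-\tau),\varphi(\tfin-\tau))$ and likewise for $\dot\varphi^+$. The left-hand column of \eqref{eq:0036}, $a(\psi+\pi,\varphi)=-a(\psi,\varphi)$ and $b(\psi+\pi,\varphi)=-b(\psi,\varphi)$, absorbs exactly this sign, so that $\dot\psi^+=a(\psi^+,\varphi^+)$ and $\dot\varphi^+=b(\psi^+,\varphi^+)$ and \eqref{eq:0024} again holds. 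The cost relation between $\int_0^{\tfin}c(\psi^+,\varphi^+)\,\mathrm{d}\tau$ and $\int_0^{\tfin}c(\psi,\varphi)\,\mathrm{d}\tau$ in \eqref{eq:0085} then follows by inserting $c(\psi+\pi,\varphi)=-c(\psi,\varphi)$ into the integrand and reparametrizing the integral by $u=\tfin-\tau$.

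I do not expect a genuine obstacle: the lemma is a purely algebraic consequence of the discrete symmetries generated by $(\psi,\varphi)\mapsto(-\psi,-\varphi)$ and $(\psi,\varphi)\mapsto(\psi+\pi,\varphi)$ combined with time reversal. The single point demanding vigilance is the sign bookkeeping in the $+$ case, where the chain-rule factor coming from $\tfin-\tau$ and the $\pi$-shift antisymmetry of $a$, $b$ and $c$ must be matched carefully; this is precisely the mechanism by which a time-reversed orbit again solves the autonomous system \eqref{eq:0024} and by which its cost integral is transported.
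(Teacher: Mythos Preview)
Your approach is exactly what the paper has in mind; its own proof reads simply ``This is straightforward.'' The verification for $(\psi^\sharp,\varphi^\sharp)$ and the check that $(\psi^+,\varphi^+)$ solves \eqref{eq:0024} are correct.

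There is, however, one place where your sketch is too quick. Carry out the last step explicitly: the substitution $u=\tfin-\tau$ gives
\[
\int_0^{\tfin}c(\psi^+(\tau),\varphi^+(\tau))\,\mathrm{d}\tau
=\int_0^{\tfin}c(\psi(u)+\pi,\varphi(u))\,\mathrm{d}u,
\]
and then $c(\psi+\pi,\varphi)=-c(\psi,\varphi)$ from \eqref{eq:0036} yields $-\int_0^{\tfin}c(\psi,\varphi)\,\mathrm{d}\tau$, not $+\int_0^{\tfin}c(\psi,\varphi)\,\mathrm{d}\tau$. The time-reversal contributes no sign (the flip of limits cancels the $-\mathrm{d}u$), so only the $\pi$-shift antisymmetry of $c$ survives. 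Thus \eqref{eq:0085} as printed carries a sign error in the ``$+$'' equality; your argument actually proves the corrected relation. This is harmless for the only place the lemma is used, the reduction at the start of the proof of Theorem~\ref{prop:conv}: since $\bar\lambda$ there ranges over all of $\RR$, interchanging $\varphi^0$ and $\varphi^1$ at the price of replacing $\bar\lambda$ by $-\bar\lambda$ is an equally valid reduction.
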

\begin{proof}
  This is straightforward.\qed
\end{proof}
We also use the ``$\sharp$'' and ``$+$'' notation to denote the transformations in $\mathcal{C}$:
\begin{equation}
  \label{zeq:17}
  (\psi,\varphi)^\sharp=(-\psi,-\varphi)\,,\ \ (\psi,\varphi)^+=(\pi+\psi,\varphi)\,.
\end{equation}

Let us make further constructions and remarks on the conditions \eqref{eq:0036}-\eqref{zeq:2} before proceeding with the
proof \textit{per se}.

\paragraph{Stable and unstable manifolds.}
Equation \eqref{zeq:16} implies that the Jacobian of the vector field at the equilibrium $(0,0)$ has two real eigenvalues, of which one is positive and the other negative; i.e. $(0,0)$ is an
hyperbolic saddle (see e.g. \cite[section 8.3]{Hirs-Sma-Dev04}).
Thus it has a stable manifold $\mathcal{S}^0$ and an unstable manifold $\mathcal{U}^0$; these are curves passing through
$(0,0)$ tangent to the corresponding eigenvectors. Their existence is a consequence of \eqref{zeq:16} but 
point \ref{ass-US} assumes a more specific description.

\paragraph{The number $\overline{\sigma}$.} Everything may be stated in a much simpler if $\overline{\sigma}=0$: in
particular in Points ~\ref{ass-b} and ~\ref{ass-US}, $S$ and $Z_b$ may be continued into continuous even maps
  $(-\frac\pi2,\frac\pi2)\to\RR$, and, for instance, the equations of $\mathcal{S}^0$ and $\mathcal{U}^0$, instead of
  \eqref{zeq:10}, as
  $\{\psi=S(\varphi),-\frac\pi2<\varphi<\frac\pi2\}$ and $\{\psi=U(\varphi),-\frac\pi2<\varphi<\frac\pi2\}$.
We would have preferred this simpler formulation but we do not assume $\overline{\sigma}=0$ because the proof of
Theorem~\ref{Th1} in the tangential case uses Theorem~\ref{prop:conv} with a nonzero $\overline{\sigma}$ ($\overline{\sigma}=\arctan\frac12$).

However, in order to avoid considering positive and negative $\varphi$'s as different cases in \eqref{zeq:10}, \eqref{zeq:7} and
  \eqref{zeq:9}, we define the functions
          $U^0\!\!:[-\frac\pi2,\frac\pi2]\!\to\![-\pi,\pi]$,
          $S^0\!\!:[-\frac\pi2,\frac\pi2]\setminus\{0\}\!\to\![-\pi,\pi]$,
          $Z_b^0\!\!:[-\frac\pi2,\frac\pi2]\setminus\{0\}\!\to\!(-\frac\pi2,\frac\pi2)$
after $S$, $U$, $Z_b$;
these functions are odd and coincide with the former on $(0,\frac\pi2]$:
        \begin{gather}
        \label{zeq:4}
        S^0(-\varphi)=-S^0(\varphi) ,\ \ \ U^0(-\varphi)=-U^0(\varphi) ,\ \ \ Z_b^0(-\varphi)=-Z_b^0(\varphi)\,,
      \\
        \label{zeq:5}
         0<\varphi\leq\frac\pi2\ \ \Rightarrow\ \ 
       S^0(\varphi)=S(\varphi) ,\ U^0(\varphi)=U(\varphi) ,\ Z_b^0(\varphi)=Z_b(\varphi)\,.
        \end{gather}
  Then, the description \eqref{zeq:10} of the stable and unstable manifolds of $(0,0)$ may be replaced by 
        \begin{equation}
          \label{zeq:010}
          \begin{array}{l}
            \mathcal{S}^0=\{(\psi,\varphi)\in\mathcal{C},\;\varphi\neq0\mbox{ and }\psi=S^0(\varphi)\}\cup\,[-\overline{\sigma},\overline{\sigma}]\!\times\!\{0\}\,,
        \\
            \mathcal{U}^0=\{(\psi,\varphi)\in\mathcal{C},\;\psi=U^0(\varphi)\}\,.
          \end{array}
        \end{equation}
  and \eqref{zeq:7} may be replaced by
       \begin{displaymath}
           b(\psi,\varphi)=0\ \Leftrightarrow\ 
             \begin{cases}
               &\varphi=0\mbox{ and }\psi\in[-\overline{\sigma},\overline{\sigma}]\cup[\pi-\overline{\sigma},\pi+\overline{\sigma}]
               \\
               \mbox{or}&\varphi\neq0\mbox{ and }\psi\in\{Z_b^0(\varphi)\}\cup\{Z_b^\pi (\varphi)\}\;.
             \end{cases}
       \end{displaymath}

\paragraph{The equilibrium point $(\pi,0)$.}
  The ``$+$'' symmetry (see \eqref{eq:0082}) obviously maps $(0,0)$ to $(\pi,0)$, the stable manifold of $(0,0)$ to the
  unstable manifold of $(\pi,0)$, the unstable one to the stable one and the set of zeroes of $b$ to
  itself. Define $\mathcal{S}^\pi$, $\mathcal{U}^\pi$, $Z_b^\pi$ after $\mathcal{S}^0$, $\mathcal{U}^0$, $Z_b^0$ by
       \begin{equation}
         \label{zeq:6}
         S^\pi(\varphi)=\pi+U^0(\varphi),\ \ U^\pi(\varphi)=\pi+S^0(\varphi),\ \ Z_b^\pi(\varphi)=\pi+Z_b^0(\varphi)\,.
       \end{equation}
From \eqref{zeq:4}, \eqref{zeq:5} and \eqref{zeq:6}, the relation \eqref{zeq:2} translates into
  \begin{equation}
    \label{zeq:22}
    \begin{array}{rll}
      0<\varphi<\frac\pi2&\Rightarrow&
      \begin{cases}
        S^0(\varphi)<Z_b^0(\varphi)<0<U^0(\varphi)\,,\\U^\pi(\varphi)<Z_b^\pi(\varphi)<\pi<S^\pi(\varphi)\,,
      \end{cases}
      \\[2.8ex]
      -\frac\pi2<\varphi<0 &\Rightarrow&
      \begin{cases}
        U^0(\varphi) <0<Z_b^0(\varphi)<S^0(\varphi)\,,\\S^\pi(\varphi) <\pi<Z_b^\pi(\varphi)<U^\pi(\varphi)\,.
      \end{cases}
    \end{array}
  \end{equation}

The stable and unstable manifolds of  $(\pi,0)$ are:
       \begin{equation}
         \label{zeq:23}
         \begin{array}{l}
           \mathcal{S}^\pi=\{(\psi,\varphi)\in\mathcal{C},\;\psi=S^\pi(\varphi)\}\,,
       \\
           \mathcal{U}^\pi=\{(\psi,\varphi)\in\mathcal{C},\;\varphi\neq0\mbox{ and }\psi=U^\pi(\varphi)\}\cup\,[-\overline{\sigma},\overline{\sigma}]\!\times\!\{0\}\,.
         \end{array}
       \end{equation}
  Also, \eqref{zeq:7} and \eqref{zeq:9} become:
    \begin{gather}
      \label{zeq:07}
      b(\psi,\varphi)=0\ \Leftrightarrow\
      \begin{cases}
        &\varphi=0\mbox{ and }\psi\in[-\overline{\sigma},\overline{\sigma}]\cup[\pi-\overline{\sigma},\pi+\overline{\sigma}] \\
        \mbox{or}&\varphi\neq0\mbox{ and }\psi\in\{Z_b^0(\varphi)\}\cup\{Z_b^\pi (\varphi)\}\;,
      \end{cases} 
      \\ 
      \label{zeq:09}
      b(\psi,\varphi)>0 \mbox{ if }
      \begin{cases}
        & \varphi=0\mbox{ and } \overline{\sigma}<\psi<\pi+\overline{\sigma} \\
        \mbox{or}& \varphi\neq0\mbox{ and } Z_b^0(\varphi)<\psi<Z_b^\pi(\varphi)\,.
      \end{cases}
   \end{gather}

\paragraph{Invariant regions of $\mathcal{C}$.}
\begin{figure}[ht]
  \centering
  \includegraphics[width=1\textwidth]{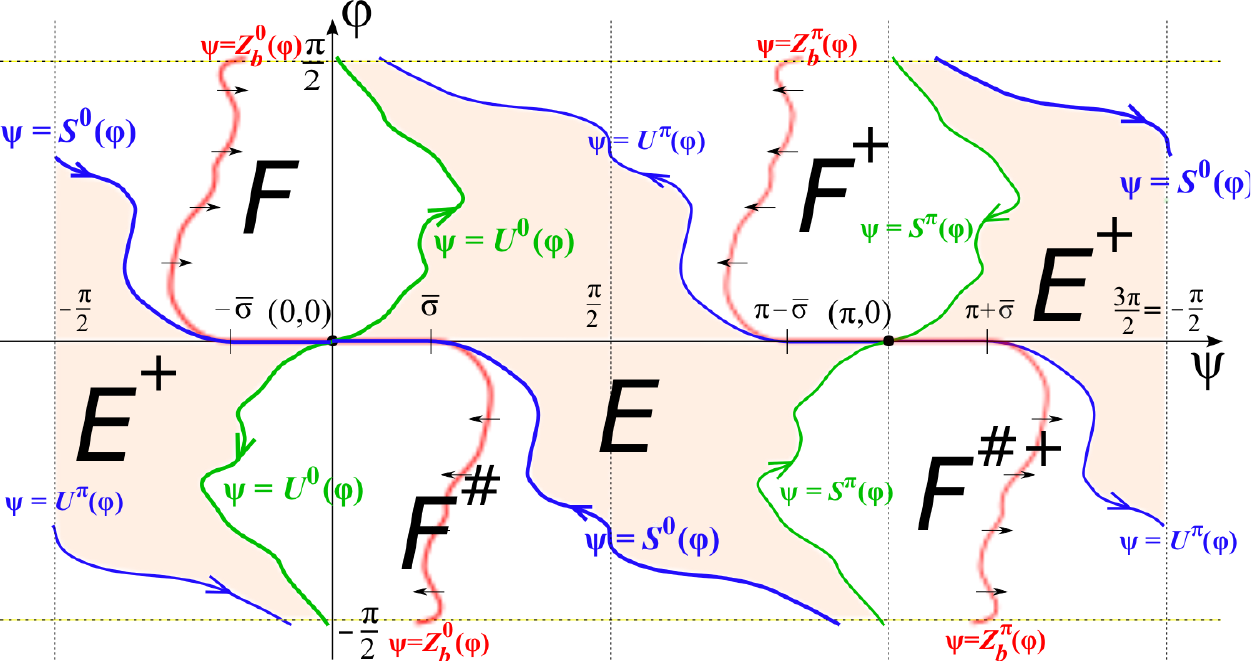}
  \caption{This picture reflects qualitatively the assumptions on $a$ and $b$. It is provided as a help to follow
    the proof; a precise numerical drawing is provided in Figure~\ref{Fig_1} and  Figure~\ref{Fig_2}  for
the specific expression of $a,b$ in the case of full control or tangential thrust.
\newline % \\
The six invariant regions separated by the stable and unstable manifolds of $(0,0)$ and $(\pi,0)$ are shown. The other
      curves are $\psi=Z_b^0(\varphi)$ and $\psi=Z_b^\pi(\varphi)$, where $b(\psi,\varphi)$ changes sign.
  \label{fig:regions}}
\end{figure}
The stable and unstable manifolds
$\mathcal{S}^0$, $\mathcal{U}^0$, $\mathcal{S}^\pi$,
$\mathcal{U}^\pi$, that intersect at the equilibria $(0,0)$ and $(\pi,0)$ are invariant sets that divide the cylinder $\mathcal{C}$ into six open regions:
% \begin{equation}
%   \label{eq:0088}
  \begin{align}
\displaybreak[0]\label{zeq:100}
    F=&\,\{(\psi,\varphi),\;0<\varphi<\pi/2 \ \ \textup{and}\ \ S^0(\varphi) <\psi<U^0(\varphi)\}\,,\\
\displaybreak[0]\label{zeq:101}
    F^+=&\,\{(\psi,\varphi),\;0<\varphi<\pi/2 \ \ \textup{and}\ \ U^\pi(\varphi)<\psi<S^\pi(\varphi)\}\,,\\
\displaybreak[0]\label{zeq:102}
    F^\sharp=&\,\{(\psi,\varphi),\;-\pi/2<\varphi<0 \ \ \textup{and}\ \ U^0(\varphi)<\psi<S^0(\varphi)\}\,,\\
\displaybreak[0]\label{zeq:103}
    F^{\sharp+}=&\,\{(\psi,\varphi),\;-\pi/2<\varphi<0 \ \ \textup{and}\ \ S^\pi(\varphi)<\psi<U^\pi(\varphi)\}\,,\\
\displaybreak[0]\label{zeq:104}
 E=&\,
  \{(\psi,\varphi),\ S^0(\varphi)<\psi<S^\pi(\varphi) \;\textup{if}\;\varphi\leq0\,,\ \ U^0(\varphi)<\psi<U^\pi(\varphi)
  \; \textup{if}\;\varphi\geq0\,\}\,,
\\
\displaybreak[0]\label{zeq:105}
 E^+=&% \!
% \begin{array}[t]{rl}
%   \{(\psi,\varphi),&U^\pi(\varphi)<\psi<U^0(\varphi) \ \ \textup{if}\ \ \varphi\leq0\,,\\&\quad \quad \quad \quad  S^\pi(\varphi)<\psi<S^0(\varphi) \ \ \textup{if}\ \ \varphi\geq0\,\}\,.
% \end{array}
\,
  \{(\psi,\varphi),\ U^\pi(\varphi)<\psi<U^0(\varphi) \; \textup{if}\; \varphi\leq0\,,\ \ S^\pi(\varphi)<\psi<S^0(\varphi) \; \textup{if}\; \varphi\geq0\,\}\,.
  \end{align}
$E$ is self-symmetric under the ``$\sharp$'' symmetry and $E^+$ is its own image under the ``+'' symmetry; $F^+$,
$F^\sharp$, $F^{\sharp+}$ are the images of $F$ by the ``$\sharp$'' and ``+'' symmetries; see Lemma~\ref{lem:sym}.
These regions are represented in Figure~\ref{fig:regions}.

\medskip

We now state and prove two preliminary lemmas and give the proper proof of Theorem~\ref{prop:conv}.

\begin{lemma}
\label{lem:regions}
Assume that $a,b$ satisfy assumptions \eqref{eq:0036} to \eqref{zeq:15}, and
consider a solution $t\mapsto(\psi(t),\varphi(t))$ of \eqref{eq:0024} defined on $[0,\tfin]$, $\tfin>0$.

\noindent 1.
If it starts in $E$ or in the upper part of the unstable manifold
$\mathcal{U}^0$, $\varphi(.)$ is monotonic increasing.

\noindent 2.
If it starts in $F$, then
\\- if it starts in 
$\{(\psi,\varphi),\,Z_b^0(\varphi)\leq\psi<U^0(\varphi)\}$, it remains in this part of $E$ and $\varphi(.)$ is monotonic increasing,
\\- if it starts in $\{(\psi,\varphi),\,S^0(\varphi)<\psi<Z_b^0(\varphi)\}$, either it remains in this part of $E$
and $\varphi(.)$ is monotonic decreasing, or there is some $\bar\tau$, $0<\bar\tau<\tfin$ such that
 $t\mapsto\varphi(t)$ is monotonic decreasing for $t$ between $0$ and $\bar\tau$, minimum for $t=\bar\tau$ and monotonic
 increasing for $t$ between $\bar\tau$ and $\tfin$, with $\psi(\bar\tau)-Z_b^0(\varphi(\bar\tau))=b(\psi(\bar\tau),\varphi(\bar\tau))=0$.

\noindent 3.
If  it starts in  the upper part of the stable manifold $\mathcal{S}^0$, $\varphi(.)$ is monotonic non-increasing.
\end{lemma}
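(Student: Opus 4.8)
The plan is to deduce everything from the sign of $b$ (which is the sign of $\dot\varphi$), the invariance of the four manifolds $\mathcal{S}^0,\mathcal{U}^0,\mathcal{S}^\pi,\mathcal{U}^\pi$, and a single transversality computation along the isocline $\{b=0\}$. First I would record three facts used throughout. (i) By Point~\ref{ass-b} in the form \eqref{zeq:09}, for $\varphi\neq0$ one has $b(\psi,\varphi)>0$ precisely when $Z_b^0(\varphi)<\psi<Z_b^\pi(\varphi)$, and $b$ vanishes exactly on the graphs $\psi=Z_b^0(\varphi)$ and $\psi=Z_b^\pi(\varphi)$. (ii) The stable and unstable manifolds are invariant curves, so by uniqueness of solutions (Point~\ref{ass-cauchy}) no trajectory crosses them. (iii) Along the isocline branch $\psi=Z_b^0(\varphi)$, $0<\varphi<\tfrac\pi2$, where $Z_b^0$ is $C^1$, a trajectory satisfies
\[
\frac{\mathrm d}{\mathrm dt}\bigl(\psi-Z_b^0(\varphi)\bigr)=a-(Z_b^0)'(\varphi)\,b=a\bigl(Z_b^0(\varphi),\varphi\bigr)>0
\]
by \eqref{zeq:15}; hence the flow crosses this branch transversally, always from the side $\psi<Z_b^0$ to the side $\psi>Z_b^0$.

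For Part~1 I would use the orderings \eqref{zeq:22} to check $E\subset\{b>0\}$: for $\varphi>0$, $E=\{U^0(\varphi)<\psi<U^\pi(\varphi)\}$ and $Z_b^0<0<U^0$, $U^\pi<Z_b^\pi$ give $(U^0,U^\pi)\subset(Z_b^0,Z_b^\pi)$; the case $\varphi<0$ is symmetric. Since $E$ is bounded only by invariant manifolds, by (ii) a solution starting in $E$ stays in $E$, where $b>0$, so $\varphi$ is strictly increasing. A solution started on the upper part of $\mathcal{U}^0$ remains on it by invariance, and along that branch $b>0$ (as shown in the proof of Point~\ref{ass-US}), so $\varphi$ increases there as well.

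For Part~2, \eqref{zeq:22} gives $S^0<Z_b^0<U^0$ for $0<\varphi<\tfrac\pi2$, so the isocline $\psi=Z_b^0(\varphi)$ splits $F$ into an upper part $\{Z_b^0\le\psi<U^0\}$, where $b\ge0$, and a lower part $\{S^0<\psi<Z_b^0\}$, where $b<0$. The upper part is positively invariant: its right edge $\mathcal{U}^0$ cannot be crossed by (ii), and on its left edge the flow enters it by (iii); hence a solution started there stays, with $\varphi$ increasing. In the lower part the left edge $\mathcal{S}^0$ cannot be crossed, so the only exit is across the isocline, and by (iii) that exit is into the upper part. Therefore on $[0,\tfin]$ either the solution stays in the lower part with $\varphi$ strictly decreasing, or it meets the isocline at a single instant $\bar\tau\in(0,\tfin)$, where $b(\psi(\bar\tau),\varphi(\bar\tau))=0$ and $\psi(\bar\tau)=Z_b^0(\varphi(\bar\tau))$; then $\varphi$ decreases on $[0,\bar\tau]$, is minimal at $\bar\tau$, and increases on $[\bar\tau,\tfin]$, the positive invariance of the upper part forbidding any second crossing.

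For Part~3, the upper part of $\mathcal{S}^0$ is itself an invariant trajectory tending to $(0,0)$; since $S^0(\varphi)<Z_b^0(\varphi)$ for $0<\varphi<\tfrac\pi2$ by \eqref{zeq:22}, it lies in $\{b<0\}$, so $\varphi$ is strictly decreasing, hence monotonic non-increasing. I expect the crux to be fact (iii) together with the ``no second crossing'' argument in Part~2: these rest on $Z_b^0$ being $C^1$ (Point~\ref{ass-b}) and on $a$ being strictly positive on the isocline (Point~\ref{ass-a}), which make the crossing one-directional and force the decrease-then-increase profile rather than oscillations. The pinching of the lower part of $F$ as $\varphi\to0^+$, where $S^0,Z_b^0\to-\overline\sigma$, causes no difficulty, since that limit point lies on the invariant set $\mathcal{S}^0$ and cannot be reached within the finite time $\tfin$.
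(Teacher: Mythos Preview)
Your proof is correct and follows essentially the same route as the paper: both arguments rest on the sign of $b$ in each region (deduced from \eqref{zeq:09} and the orderings \eqref{zeq:22}), together with the key transversality computation $\frac{\mathrm d}{\mathrm dt}(\psi-Z_b^0(\varphi))=a-(Z_b^0)'\,b=a(Z_b^0(\varphi),\varphi)>0$ on the isocline, which yields positive invariance of the $\{b>0\}$ side and the one-way crossing in Part~2. The only nuance you glide over is that, when $\overline\sigma>0$, the upper branch of $\mathcal{S}^0$ reaches the segment $[-\overline\sigma,\overline\sigma]\times\{0\}$ in finite time, where $b=0$ and $\varphi$ becomes constant---this is precisely why the statement says ``non-increasing'' rather than ``strictly decreasing'' in Part~3.
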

\noindent
The behavior in the regions $E^+$, $F^+$, $F^\sharp$, $F^{\sharp+}$ and on the other pieces of stable or unstable curves
are obtained by symmetry; see Lemma~\ref{lem:sym}.
\begin{proof}
  Points 1 and 3 are obvious because $b$ is negative in $E$ and in the upper part of $\mathcal{U}^0$ while it is
  positive in the upper part of $\mathcal{S}^0$ except, if $\overline{\sigma}$ is nonzero, on the segment $\{\varphi=0\}$, where it is zero.
  Let us prove point 2.
  In the region $F$, according to \eqref{zeq:9}, $b$ has the sign of $\psi-Z_b^0(\varphi)$. Using differentiability of $Z_b^0$ 
  away from $\varphi=0$ (see \eqref{zeq:1}), one may compute the derivative of $\psi-Z_b^0(\varphi)$ with respect to time along a
  solution; it is is $a(\psi,\varphi)-{Z_b^0}'(\varphi)b(\psi,\varphi)$, which, according to \eqref{zeq:15}, is positive when
  $\psi-Z_b^0(\varphi)=0$, i.e. when $b(\psi,\varphi)=0$. Hence the region where $b>0$ is positively invariant, this 
  accounts for the behavior of solutions starting in $\{b>0\}$, 
  and no solution may stay on the locus where $b=0$, this accounts for solutions that start in $\{b<0\}$: either they
  stay in this part of $F$ or they cross $\{b=0\}$ at one time and then remain in $\{b>0\}$.\qed
\end{proof}

\smallskip

For any number $f$, $0<f<\frac\pi2$, let
\begin{equation}
  \label{zeq:37}
  \mathcal{C}_f=\{(\psi,\varphi)\in \mathcal{C}\,,\;|\varphi|<f\}.
\end{equation}

\begin{lemma}
  \label{lem:infini}
Assume that $a,b,c$ satisfy assumptions \eqref{eq:0036} to \eqref{zeq:18}.
For any number $f$, $0<f<\frac\pi2$, there are two neighborhoods $\Omega_0$ and $\Omega_\pi$ of $(0,0)$ and $(\pi,0)$ respectively and,
a number $T(f)\geq0$ such that
\begin{gather}
  \label{zeq:20}
  (\psi,\varphi)\in \Omega_0\Rightarrow c (\psi,\varphi)>{\textstyle\frac12}\,,\ \ 
  (\psi,\varphi)\in \Omega_\pi\Rightarrow c (\psi,\varphi)<-{\textstyle\frac12}\,,
\\
  \label{zeq:36}
  \mbox{no solution $t\mapsto(\psi(t),\varphi(t))$ of \eqref{eq:0024} may cross both $\Omega_0$ and $\Omega_\pi$,}
\end{gather}
and any solution $t\mapsto(\psi(t),\varphi(t))$ defined on the time
  interval $[0,\tfin]$ such that $(\psi(t),\varphi(t))\in\mathcal{C}_f$ for all $t\in[0,\tfin]$ satisfies
  \begin{equation}
    \label{zeq:40}
    \meas\,\{t\in[0,\tfin]\,,\ (\psi(t),\varphi(t))\notin\bigl(\Omega_0\cup \Omega_\pi\bigr)\}\leq T(f)\,.
  \end{equation}
\end{lemma}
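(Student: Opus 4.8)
The plan is to reduce the three assertions to two structural facts about the planar system \eqref{eq:0024}: that the vector field $(a,b)$ has no equilibrium on the compact set obtained by deleting the two saddle neighborhoods from $\overline{\mathcal{C}_f}$, and that its dynamics there is non-recurrent, so that no trajectory can dwell for long away from the two saddles. The neighborhoods $\Omega_0,\Omega_\pi$ and the sign condition \eqref{zeq:20} are the easy part: since $L,L_\psi$ (resp.\ $M,M_\psi$) are continuous, $c$ is continuous on $\mathcal{C}$, and \eqref{zeq:18} gives $c(0,0)=1$, $c(\pi,0)=-1$, so continuity yields disjoint, arbitrarily small open neighborhoods $\Omega_0\ni(0,0)$, $\Omega_\pi\ni(\pi,0)$ with $c>\tfrac12$ on $\Omega_0$ and $c<-\tfrac12$ on $\Omega_\pi$.

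For \eqref{zeq:36} I would use that the stable and unstable manifolds partition $\mathcal{C}$ into the six invariant regions \eqref{zeq:100}--\eqref{zeq:105}, so every solution stays in a single region (or on a manifold). In the four regions $F,F^+,F^\sharp,F^{\sharp+}$ and on the four manifolds only one of the two equilibria lies in the closure, so for $\Omega_0,\Omega_\pi$ small a solution there meets at most one of them. In $E$ and $E^+$, $\varphi$ is monotone along solutions by Lemma~\ref{lem:regions}, while the manifolds bounding these regions emanate from $(0,0)$ with $\psi\to0$ and from $(\pi,0)$ with $\psi\to\pi$ (see \eqref{zeq:010}, \eqref{zeq:23}); hence while $|\varphi|$ is small a solution is squeezed near one of the two values $\psi=0$ or $\psi=\pi$ and can approach only one saddle. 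Shrinking $\Omega_0,\Omega_\pi$ then gives \eqref{zeq:36}.

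The core is \eqref{zeq:40}. Set $K=\overline{\mathcal{C}_f}\setminus(\Omega_0\cup\Omega_\pi)$, which is compact because $f<\tfrac\pi2$. First I would check that the only equilibria of \eqref{eq:0024} are $(0,0)$ and $(\pi,0)$: by Points~\ref{ass-b} and \ref{ass-a} the zero set of $b$ is the union of the curves $\psi=Z_b^0(\varphi)$, $\psi=Z_b^\pi(\varphi)$ and the segments of $\{\varphi=0\}$, and on all of these $a$ is nonzero except at the two saddles (using \eqref{zeq:15} and the symmetries \eqref{eq:0036}). Thus $K$ contains no equilibrium and $\|(a,b)\|\ge m>0$ on $K$. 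The key quantitative step is a uniform bound $T_0(f)$ on the length of any time interval on which a solution stays in $K$. I would prove this by contradiction and compactness: if solutions remained in $K$ on intervals of length tending to $\infty$, then, using continuity of the flow $\Phi$ (see \eqref{zeq:12}) and compactness of $K$, a subsequence of their restarted translates would converge to a solution remaining in $K$ for all forward time, whose $\omega$-limit set is a nonempty compact invariant subset of $K$. But $K$ has no equilibrium, and there are no periodic orbits at all, since along any solution $\varphi$ is monotone up to at most one turning point (Lemma~\ref{lem:regions}), so a periodic $\varphi$ must be constant, forcing the orbit into $\{b=0\}$, where $a\neq0$ off the saddles; this contradiction yields $T_0(f)$.

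Finally I would assemble \eqref{zeq:40} by choosing $\Omega_0,\Omega_\pi$ to be fundamental neighborhoods of the hyperbolic saddles (small boxes in linearizing coordinates), which every trajectory meets along a single time interval. By \eqref{zeq:36} a given solution meets at most one of $\Omega_0,\Omega_\pi$, so the set of times in $[0,\tfin]$ at which it lies in $K$ is $[0,\tfin]$ minus at most one interval, hence at most two intervals; each is a sojourn in $K$ of length at most $T_0(f)$, so the total measure is at most $2T_0(f)=:T(f)$. I expect the main obstacle to be the non-recurrence step, i.e.\ ruling out long stays in $K$: this is precisely where the monotonicity of $\varphi$ coming from Lemma~\ref{lem:regions} (and behind it the sign analysis of $a,b$) together with the compactness and continuous-dependence argument are indispensable.
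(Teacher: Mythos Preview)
Your outline is correct, but it follows a genuinely different route from the paper's proof. For \eqref{zeq:36} the paper does not use the six invariant regions; it simply observes that the two trajectories through $(\pm\tfrac\pi2,0)$ separate the cylinder into two pieces, one containing each small square $\Omega_0^\varepsilon,\Omega_\pi^\varepsilon$. The heart of the difference is in \eqref{zeq:40}. The paper makes an explicit partition $[0,\tfin]=I_0\cup I_1\cup I_2\cup I_3$, where $I_1$ is the time spent in thin rectangles $\mathcal{K}$ along the segment $[-\overline\sigma,\overline\sigma]\times\{0\}$ (bounded because $|a|$ is bounded below there and $\psi$ is monotone), $I_2$ is the time in flow-thickened neighborhoods $\Sigma_0,\Sigma_\pi$ of the curves $\{b=0\}$ (bounded by construction by $1$), and $I_3$ is the rest, where $|b|\geq m(f)>0$, so the bound follows from the total variation of $\varphi$ via Lemma~\ref{lem:regions}. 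This yields an explicit constant $T(f)=\overline\sigma/\underline a+1+(\pi-2\ell)/m(f)$ and uses nothing beyond the monotonicity information already in Lemma~\ref{lem:regions}. Your argument instead invokes compactness and an $\omega$-limit/Poincar\'e--Bendixson step to get a uniform sojourn bound $T_0(f)$, together with the isolating-block property of a small saddle neighborhood to reduce to at most two sojourns. That is slicker and avoids the somewhat delicate case analysis around $\mathcal{K}$ when $\overline\sigma>0$, but it is nonconstructive and tacitly uses Poincar\'e--Bendixson (valid here since solutions are unique) and a local saddle normal form; the paper's proof is more elementary and gives an explicit $T(f)$. One small point of care in your write-up: fix $\Omega_0,\Omega_\pi$ once and for all (small enough to be isolating blocks \emph{and} to satisfy \eqref{zeq:20}--\eqref{zeq:36}) \emph{before} running the compactness argument, since $K$ and hence $T_0(f)$ depend on that choice.
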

The left-hand side ($\meas$ stands for the Lebesgue measure of a subset of $\RR$) is simply the time spent by the
solution outside the neighborhoods $\Omega_0$ and $\Omega_\pi$ of $(0,0)$ and $(\pi,0)$; this
bound depends on $f<\frac\pi2$ because $a$ and $b$ could tend to zero as $\varphi$ tends to $\pm\frac\pi2$.

\smallskip

\noindent\textit{Proof of Lemma~\ref{lem:infini}.}
To deal with the case where the stable or unstable manifolds contain a segment of the $\psi$-axis, we must account for both the 
cases where $\overline{\sigma} = 0$ and where $\overline{\sigma} \neq 0$.
We define the set $\mathcal{K}^{\varepsilon} $ ($\varepsilon>0$) as follows:
\begin{equation}
  \label{zeq:41}
  \mathcal{K}^{\varepsilon}=\varnothing \ \ \text{if}\ \ \overline{\sigma}=0
\end{equation}
and, if $\overline{\sigma}\neq0$,
\begin{equation}
  \label{zeq:44}
  \begin{array}{ll}
    \mathcal{K}^{\varepsilon}=&
      [\varepsilon,\overline{\sigma}+\varepsilon] \!\times\! [-\bar k \varepsilon,\bar k \varepsilon]
      \,\cup\,
      [-\overline{\sigma}-\varepsilon,-\varepsilon] \!\times\! [-\bar k \varepsilon,\bar k \varepsilon] \\
      & 
      \cup\;
      [\pi+\varepsilon,\pi+\overline{\sigma}+\varepsilon] \!\times\! [-\bar k \varepsilon,\bar k \varepsilon]
      \,\cup\,
      [\pi-\overline{\sigma}-\varepsilon,\pi-\varepsilon] \!\times\! [-\bar k \varepsilon,\bar k \varepsilon]
  \end{array}
\end{equation}
with
\begin{equation}
  \label{zeq:63}
  0<\bar k\leq\min\{\,1,\frac12 \left|\left. \frac{\partial a}{\partial\psi}(0,0)\right/
\frac{\partial a}{\partial\varphi}(0,0)\,\right|\,\}.
\end{equation}

Since $\frac{\partial b}{\partial\psi}(0,0)$ is zero,
\eqref{zeq:16}, \eqref{zeq:9} and \eqref{zeq:15} imply $\frac{\partial b}{\partial\varphi}(0,0)>0$, 
$\frac{\partial a}{\partial\psi}(0,0)<0$, and \eqref{zeq:2} implies $\frac{\partial a}{\partial\varphi}(0,0)\geq0$.
The slope of the tangent to the
curve $a=0$ is $-\left. \frac{\partial a}{\partial\psi}(0,0)\right/
\frac{\partial a}{\partial\varphi}(0,0)$, and the slope of the unstable manifold $\{\psi=U^0(\varphi)\}$ 
(i.e. the slope of the eigenvector corresponding to the negative eigenvalue $\frac{\partial a}{\partial\psi}(0,0)$) is 
$\left.\left( \partial b/\partial\varphi(0,0)-\frac{\partial a}{\partial\psi}(0,0)\right)\right/
\frac{\partial a}{\partial\varphi}(0,0)$, larger than the previous slope.
Hence, for some open ball $B$ around the origin,
\begin{equation}
\label{zeq:64}
|\varphi|\leq\bar k|\psi|,\, \psi\neq0,\, (\psi,\varphi)\in B 
\ \Rightarrow\
  |a(\psi,\varphi)|\neq0 \mbox{ and } |\psi-U^0(\varphi)|\neq0 \,.
\end{equation}
This implies that $a$ does not vanish on $B\cap([\varepsilon,\overline{\sigma}+\varepsilon]\!\times\![-\bar k
\varepsilon,\bar k \varepsilon])$.
On the compact segment $\{(\psi,0),\, 0\leq\psi\leq\overline{\sigma},\, (\psi,0)\notin B\}$, $|a(\psi,\varphi)|$ has a positive lower
bound $\underline{a}$; hence, for $\varepsilon$ small enough, $a (\psi,\varphi)$ is larger that $\frac12\underline{a}/2$ on the
part of the compact rectangle 
$[\varepsilon,\overline{\sigma}+\varepsilon]\times[-\bar k \varepsilon,\bar k \varepsilon]$ that is outside $B$. Gluing
the piece inside $B$ and the piece outside $B$ together, we get that, for $\varepsilon$ small enough, $a$ does not vanish on the
compact rectangle $[\varepsilon,\overline{\sigma}+\varepsilon]\times[-\bar k \varepsilon,\bar k \varepsilon]$. By
symmetry, i.e. from \eqref{eq:0036} and \eqref{zeq:44}, we get that, for $\varepsilon$ small enough,
\begin{equation}
  \label{zeq:47}
a \text{ does not vanish on }
\mathcal{K}^{\varepsilon}.
\end{equation}
By a similar argument, since $\psi-U^0(\varphi)$ does not vanish on the compact segment $\{(\psi,0),\,
0\leq\psi\leq\overline{\sigma},\, (\psi,0)\notin B\}$, \eqref{zeq:64} implies that it also does not vanish on
$\mathcal{K}^{\varepsilon}$ for $\varepsilon$ small enough either, hence
\begin{equation}
  \label{zeq:71}
% \text{for }\varepsilon\text{ small enough,}\ ,
  \mathcal{U}^0\cap \mathcal{K}^{\varepsilon}=\mathcal{U}^\pi\cap \mathcal{K}^{\varepsilon}=\varnothing
\end{equation}
for $\varepsilon$ small enough.
Define the neighborhoods $\Omega_0^{\varepsilon}$ and $\Omega_\pi^{\varepsilon}$
of $(0,0)$ and $(\pi,0)$ as:
\begin{gather}
    \label{zeq:32}
    \begin{split}
      \Omega_0^{\varepsilon}=&\; \{(\psi,\varphi),\; |\psi|<\varepsilon \,, |\varphi|<\varepsilon\}\,,\\
      \Omega_\pi^{\varepsilon}=\left(\Omega_0^{\varepsilon}\right)^+=&\; \{(\psi,\varphi),\; |\psi-\pi|<\varepsilon \,, |\varphi|<\varepsilon\}\,.
    \end{split}
  \end{gather}
Consider the two distinct solutions going through $(\frac\pi2,0)$ and $(-\frac\pi2,0)$; for ${\varepsilon}$ small enough they cross
neither $\Omega_\pi^{\varepsilon}$ nor $\Omega_\pi^0$, and hence they separate $\mathcal{C}$ into two regions, one containing
$\Omega_\pi^{\varepsilon}$ and the other one $\Omega_\pi^{\varepsilon}$. Hence, for ${\varepsilon}$ small enough, no
solution can cross both $\Omega_\pi^{\varepsilon}$ and $\Omega_0^{\varepsilon}$.
From \eqref{zeq:18}, it is also clear that, for ${\varepsilon}$ small enough, 
$$\min_{(\psi,\varphi)\in\Omega_0^{\varepsilon}}c (\psi,\varphi)>\frac12\ \ \ \text{and}\ \ \ \max_{(\psi,\varphi)\in\Omega_\pi^{\varepsilon}}c (\psi,\varphi)<-\frac12.$$
Let us now fix some $\varepsilon$ small enough that this is true, \eqref{zeq:47} and \eqref{zeq:71} hold and no
solution can cross both $\Omega_\pi^{\varepsilon}$ and $\Omega_0^{\varepsilon}$. Take
\begin{equation}
  \label{zeq:14}
  \Omega_0=\Omega_0^{\varepsilon},\  \Omega_\pi=\Omega_\pi^{\varepsilon},\ \mathcal{K}=\mathcal{K}^{\varepsilon}
\end{equation}
for this fixed value of $\varepsilon$. 
With this choice, one has
\begin{equation}
  \label{zeq:72}
  \mathcal{U}^0\cap \mathcal{K}=\mathcal{U}^\pi\cap \mathcal{K}=\varnothing\,,
\end{equation}
and there is some $\underline{a}>0$ such that
\begin{gather}
  \label{zeq:21}
  (\psi,\varphi)\in\ \mathcal{K}\;\Rightarrow|a(\psi,\varphi)|>\underline{a}\, 
\end{gather}
and  \eqref{zeq:20} and \eqref{zeq:36} hold: we only need to prove that \eqref{zeq:40} holds as well.

First, ``thicken'' the  curves $\psi=Z_b^0(\varphi)$ and $\psi=Z_b^\pi(\varphi)$ where $b$ vanishes, using the flow $\Phi$ (see \eqref{zeq:12}):
  \begin{equation}
     \label{zeq:33}
     \begin{split}
       \Sigma_0= &\; \{\Phi(Z_b^0(\varphi),\varphi,t) ,\;
            -{\textstyle\frac\pi2}\!<\! \varphi\!<\! {\textstyle\frac\pi2},\, \varphi\neq0,\, -{\textstyle\frac12}\!<\! t\!<\!{\textstyle\frac12} \}\cap \mathcal{C}_f,
\\
       \Sigma_\pi=(\Sigma_0)^+= &\; \{\Phi(Z_b^\pi(\varphi),\varphi,t) ,\;
            -{\textstyle\frac\pi2}\!<\! \varphi\!<\! {\textstyle\frac\pi2},\, \varphi\neq0,\, -{\textstyle\frac12}\!<\! t\!<\!{\textstyle\frac12} \}\cap \mathcal{C}_f
\,.
     \end{split}
   \end{equation}
\textit{Note:} if, for the initial condition $(\psi^o \!,\varphi^o)=(Z_b^\pi(\varphi),\varphi)$, either
$\tau^-\!\!>\!\!-\frac12$ or $\tau^+\!\!<\!\!\frac12$
(see \eqref{eq:0024}-\eqref{zeq:16}), then
$\Phi(Z_b^\pi(\varphi),\varphi,t)$ is not defined up to $-\frac12$ or $\frac12$; we however kept, for the sake of simplicity, ``$-{\textstyle\frac12}\!<\!t\!<\!{\textstyle\frac12}$'' instead of ``$\max\{-\frac12,\tau^-\}<t<\min\{\frac12,\tau^+\}$''.

The topological closure of 
$\mathcal{C}_f \setminus \bigl( 
\Omega_0\cup\Omega_\pi \cup \mathcal{K} \cup \Sigma_0\cup\Sigma_\pi
\bigr)$ does not contain any
zero of $b$, and is obviously compact. Hence $b$ has a positive lower bound $m(f)$ on that compact set:
\begin{equation}
  \label{zeq:43}
  (\psi,\varphi)\in
\mathcal{C}_f \setminus \bigl (\Omega_0\cup\Omega_\pi \cup \mathcal{K}\cup \Sigma_0\cup \Sigma_\pi\bigr)
\ \Rightarrow\ |b(\psi,\varphi)|>m(f)>0.
\end{equation}

Now consider a solution $[0,\tfin]\to\mathcal{C}_f$, and partition $[0,\tfin]$ as follows:
\begin{equation}
  \label{zeq:52}
\!
  \begin{array}{rl}
    [0,\tfin]=I_0\cup I_1\cup I_2\cup I_3\hspace{-7em}&
\\
\text{with}
   & I_0=\{t\in[0,\tfin]\,,\;
         (\psi(t),\varphi(t))\in \Omega_0\cup\Omega_\pi
      \,\}  \,,
\\
   &I_1= \{t\in[0,\tfin]\,,\;
          (\psi(t),\varphi(t)) \in \mathcal{K}
      \,\}  \,,
\\
   & I_2= \{t\in[0,\tfin]\,,\;
        (\psi(t),\varphi(t)) \in \Sigma_0\cup \Sigma_\pi \text{ and } t\notin I_0\cup I_1
      \,\}  \,,
\\
   & I_3= \{t\in[0,\tfin]\,,\;
         (\psi(t),\varphi(t))\notin  \Omega_0\cup\Omega_\pi \cup \mathcal{K}\cup \Sigma_0\cup \Sigma_\pi
      \,\}  \,.
  \end{array}  
\end{equation}
Obviously,
\begin{equation}
  \label{zeq:30}
\meas\,\{t\in[0,\tfin]\,,\ (\psi(t),\varphi(t))\notin\bigl(\Omega_0\cup \Omega_\pi\bigr)\}=
  \meas I_1 + \meas I_2 + \meas I_3 .
\end{equation}

Either the solution is one of the two equilibria
% ---and then $ I_1=I_2=I_3=\varnothing$, hence \eqref{zeq:40} is true from \eqref{zeq:30}--- 
or it stays in one of the stable or unstable manifolds or in one of the six regions $E$, $E^+$, $F$, $F^+$, $F^\sharp$, $F^{\sharp+}$.
According to Lemma~\ref{lem:sym}, and seen that the neighborhoods are invariant by the $\sharp$ symmetry and exchanged by the $+$ symmetry,
 it is enough to prove the property for solutions in the regions $E$ and $F$, the equilibrium $(0,0)$ and
 the upper parts of its stable and unstable manifolds $\mathcal{S}^0$ and $\mathcal{U}^0$.

In order to bound $\meas I_1$ if $\overline{\sigma}\neq0$ (it is zero if $\overline{\sigma}=0$), let us prove that
\begin{equation}
  \label{zeq:78}
  \begin{array}{l}
    \text{either the solution does not cross $\mathcal{K}$ }
\\
    \text{or there are times $t_1,t_2$, $0\leq t_1<t_2\leq\tfin$ such that the solution is}
\\
    \text{in $\mathcal{K}$ on the time interval $[t_1,t_2]$ and outside $\mathcal{K}$ on $[0,\tfin]\setminus [t_1,t_2]$.} 
  \end{array}
\end{equation}
\begin{itemize}
\item This is obvious if the solution is an equilibrium or is on the unstable manifold $\mathcal{U}^0$, that do not
  cross $\mathcal{K}$ (see \eqref{zeq:72}).
\item In $E$: among the rectangles in \eqref{zeq:44}, only 
$[\varepsilon,\overline{\sigma}+\varepsilon]\!\times\! [-\bar k \varepsilon,\bar k \varepsilon]$ and
$[\pi-\overline{\sigma}-\varepsilon,\pi-\varepsilon] \!\times\! [-\bar k \varepsilon,\bar k \varepsilon]$ intersect $E$;
a solution that lies in $E$ cannot cross both rectangles because the solution passing through $(0,\frac\pi2)$ separates them.
Consider a solution that crosses one of them, say $[\varepsilon,\overline{\sigma}+\varepsilon]\!\times\! [-\bar k
\varepsilon,\bar k \varepsilon]$ (the situation around $[\pi-\overline{\sigma}-\varepsilon,\pi-\varepsilon] \!\times\!
[-\bar k \varepsilon,\bar k \varepsilon]$ is similar). Since $\dot\psi$ is negative in the rectangle ($a$ does not change sign according to
\eqref{zeq:47} and $a(0,\overline{\sigma})<0$ according to \eqref{zeq:15}) and $\dot{\varphi}$ is positive in $E$, 
% the vector field points inward along the bottom and right-hand edge and outward along the top and left-hand edge: 
a solution may only exit through the top or left-hand edge; if it exits through the top edge 
$\{\varphi=\bar k \varepsilon\}$, it will not enter again because $\varphi$ will remain larger than $\bar k
\varepsilon$. If it exits through the left-hand edge, the fact that $\dot{\varphi}>0$ in $E$ only allows it to enter again
through the same edge, but this is impossible because $\dot\psi>0$ on this edge; this proves \eqref{zeq:78}.
\item In $F$:
from \eqref{zeq:72}, $[-\overline{\sigma}-\varepsilon,-\varepsilon] \!\times\! [-\bar k \varepsilon,\bar k \varepsilon]$
is the only rectangle in \eqref{zeq:44} that intersects $F$; hence the solution may only cross this rectangle.
Since, according to \eqref{zeq:47}, $a$ does not change sign in the rectangle and, according to \eqref{zeq:15},
$a(0,\overline{\sigma})>0$, $a$ is positive on the rectangle, then the vector field points inwards on the left-hand edge
$\{\psi=-\overline{\sigma}-\varepsilon\}$ and outwards on the right-hand edge $\{\psi=-\varepsilon\}$. The bottom edge is not
in $F$.
A solution may only
exit through the top or right-hand edge; if it exits through the top edge, it means that $\varphi(t)$ is increasing at
the exit time, and so, according to Lemma~\ref{lem:regions}, it will continue increasing and cannot go back to the
rectangle. 
Also, if it exits through the right-hand
edge, re-entering the rectangle through the top or left-hand edge would require $\varphi$ to increase and reach at least
$k \varepsilon$, making it impossible to reach the rectangle again because $\varphi$ will continue increasing. This
proves \eqref{zeq:78} for  solutions that remain in $F$.
\item A solution in the upper
part of the stable manifold $\mathcal{S}^0$
also satisfies \eqref{zeq:78}  because it 
enters the rectangle through the  left-hand edge or the top edge and exits it through the right-hand edge and then goes
asymptotically to $(0,0)$.
\end{itemize}
We have proved \eqref{zeq:78} for any solution. Either $I_1 =\varnothing$ or $I_1=[t_1,t_2]$ and
connectedness implies that the solution stays in a single rectangle of $\mathcal{K}$. Using \eqref{zeq:21}, $\psi(t)$ varies monotonically
in $[t_1,t_2]$, and so its variation is at most $\overline{\sigma}$:
$\underline{a}(t_2-t_1)<\overline{\sigma}$. This yields
\begin{equation}
  \label{zeq:60}
  \meas I_1\leq \overline{\sigma}/\underline{a}\,.
\end{equation}

Solutions in $E$ or in the stable and unstable manifolds cross neither $\Sigma_0$ nor $\Sigma_\pi$; hence
$I_2=\varnothing$ for these solutions.
Solutions in  $F$ may cross $\Sigma_0$ but not $\Sigma_\pi$, and
they cannot enter  $\Sigma_0$ again after leaving it because the region between $\Sigma_0$ and the unstable
manifold $\mathcal{U}^0$ is invariant in positive time. They stay in $\Sigma_0$ on a time-interval of length at most 1
from the definition \eqref{zeq:33}.
Hence, for any solution,
\begin{equation}
  \label{zeq:61}
  \meas I_2\leq 1\,.
\end{equation}

Using Lemma \ref{lem:regions}, for any solutions in $E$ or $F$ or the upper part of $\mathcal{S}^0$ or
$\mathcal{U}^0$, the total variation of $\varphi$ on the interval $[0,\tfin]$ is at most $\pi-2\ell$. The inequality \eqref{zeq:43} then implies
\begin{equation}
  \label{zeq:62}
  \meas I_3\leq (\pi-2\ell) / m(f).
\end{equation}

Setting $T(f)= \overline{\sigma} /\underline{a} +1+ (\pi\!-\!2\ell) / m(f)$, 
\eqref{zeq:30}, \eqref{zeq:60}, \eqref{zeq:61} and \eqref{zeq:62}  imply \eqref{zeq:40}.\qed

\paragraph{Proof of Theorem~\ref{prop:conv}.}
  From Lemma~\ref{lem:sym},  the ``$+$'' symmetry allows one to interchange $\varphi^1$ and
$\varphi^0$ while the ``$\sharp$'' symmetry changes their sign:  we may assume
\begin{equation}
\label{eq:0028}
  -\frac\pi2<\varphi^0\leq\varphi^1<\frac\pi2\ \ \ \text{and}\ \ \ \varphi^1\geq0
\end{equation}
in the proof without loss of generality. We distinguish four cases.

\smallskip 

\textbf{\boldmath Case a: $-\frac\pi2< \varphi^0\leq0<\varphi^1<\frac\pi2$.}
In this paragraph, by convention,
\begin{equation}
  \label{zeq:66}
  \text{if $\varphi^0=0$, then $S^0(\varphi^0)$ stands for $\overline{\sigma}$ and $S^\pi(\varphi^0)$ stands for $\pi-\overline{\sigma}$.}
\end{equation}
The solutions such that $\varphi(0)=\varphi^0$, $\varphi(\tfin)=\varphi^1$, must be in the region $E$ (see
Lemma~\ref{lem:regions} and
Figure~\ref{fig:regions}), and satisfy $S^0(\varphi^0)<\psi(0)<S^\pi(\varphi^0)$.
For any $\chi$ in the open interval $(S^0(\varphi^0),S^\pi(\varphi^0))$, let
$(\psi^\chi(.),\varphi^\chi(.))$ be the unique solution
to the Cauchy problem \eqref{eq:0024} with initial condition
\begin{equation}
  \label{zeq:48}
  \psi^\chi(0)=\chi\,,\ \varphi^\chi(0)=\varphi^0\,.
\end{equation}
It is ---see \eqref{zeq:12}--- continuous with respect to $\chi$ and continuously differentiable with respect to
$\tau$.
Since this solution is in $E$, $\varphi^\chi$ is an increasing function of time, and so there is a unique time $\tfin^\chi>0$
such that
\begin{equation}
  \label{zeq:49}
  \varphi^\chi(\tfin^\chi)=\varphi^1\,.
\end{equation}
Since $b(\,\varphi^\chi(\tfin^\chi)\,,\,\psi^\chi(\tfin^\chi)\,)>0$, there is a constant $k>0$ such that
$|\varphi^\chi(t')-\varphi^\chi(t'')|>k|t'-t''|$ for $t',t''$ in a neighborhood of $\tfin^\chi$. This implies that
$\tfin^\chi$ depends continuously on $\chi$.
This allows us to define a continuous map $\Lambda:(S^0(\varphi^0),S^\pi(\varphi^0)) \to \RR$ by 
\begin{equation}
  \label{eq:00068}
  \Lambda(\chi)=\int_0^{\tfin^\chi}   c(\psi^\chi(s),\varphi^\chi(s))\mathrm{d}s.
\end{equation}
All we need to prove is that, for any $\bar\lambda\in\RR$, there is at least one $\chi$ in
$(S^0(\varphi^0),S^\pi(\varphi^0))$ such that $\Lambda(\chi)=\bar\lambda$, i.e. that $\Lambda$ is onto. Since $\Lambda$ is
continuous, it is sufficient to prove that
\begin{equation}
  \label{zeq:50}
  \lim_{\substack{\chi\to S^0(\varphi^0)\\\chi>S^0(\varphi^0)}}\Lambda(\chi)=+\infty
\,,\ \ 
\lim_{\substack{\chi\to S^\pi (\varphi^0)\\\chi< S^\pi(\varphi^0)}}\Lambda(\chi)=-\infty
\,.
\end{equation}
The solution  $t\mapsto(\psi^{S^0(\varphi^0)}(t),\varphi^{S^0(\varphi^0)}(t))$
of \eqref{eq:0024} with initial condition $(S^0(\varphi^0)$, $\varphi^0)$ is on the stable manifold of $(0,0)$; it is
defined on $[0,+\infty)$; $\varphi^{S^0(\varphi^0)}(t)$ is negative for all time and tends to zero as $t\to+\infty$.
By continuity with
respect to initial conditions, the solutions $(\psi^\chi(.),\varphi^\chi(.))$ starting from $(\chi,\varphi^0)$ with $\chi$ close enough to
$S^0(\varphi^0)$ are also defined on $[0,\overline{\tau}]$ for arbitrarily large fixed $\overline{\tau}>0$, and converge uniformly to
$(\psi^{S^0(\varphi^0)}(.),\varphi^{S^0(\varphi^0)}(.))$ on the compact interval $[0,\overline{\tau}]$ as $\chi\to S^0(\varphi^0)$.
This proves that, for $\chi$ close enough to $S^0(\varphi^0)$, $\varphi^\chi(\overline{\tau})<0$ and hence 
$\tfin>\overline{\tau}$.
The situation near $S^\pi (\varphi^0)$ being similar, we have proved that
\begin{equation}
  \label{zeq:45}
  \lim_{\substack{\chi\to S^0(\varphi^0)\\\chi>S^0(\varphi^0)}}\tfin^\chi=
\lim_{\substack{\chi\to S^\pi (\varphi^0)\\\chi< S^\pi(\varphi^0)}}\tfin^\chi=
+\infty\;.
\end{equation}
Now define $\Omega_0$ and $\Omega_\pi$ according to Lemma~\ref{lem:infini}, and $\overline{\tau}$ large enough that 
$(\psi^{S^0(\varphi^0)}(\overline{\tau}),$ $\varphi^{S^0(\varphi^0)}(\overline{\tau}))$ is in $\Omega_0$.
For $\chi$ close enough to
$S^0(\varphi^0)$, we also have
$(\psi^\chi(\overline{\tau}),\varphi^\chi(\overline{\tau})) \in S^0(\varphi^0)$;
hence, according to \eqref{zeq:36}, solutions $(\psi^\chi(.),\varphi^\chi(.))$ with $\chi$ close enough to
$S^0(\varphi^0)$ never cross $\Omega_\pi$. The interval
$[0,\tfin^\chi]$ is partitioned into times $t$ such that $(\psi^\chi(t),\varphi^\chi(t))\in\Omega_0$ and also
 $(\psi^\chi(t),\varphi^\chi(t))\notin\bigl(\Omega_0\cup \Omega_\pi\bigr)$. Since, according to \eqref{zeq:40} ( with
$f=\max\{|\varphi^0|,|\varphi^1|\}$) $\meas\{t\in[0,\tfin^\chi]\,,\ (\psi^\chi(t),\varphi^\chi(t))\notin\bigl(\Omega_0\cup \Omega_\pi\bigr)\}\leq T(f)$
and $\meas\,\{t\in[0,\tfin^\chi]\,,\
(\psi^\chi(t),\varphi^\chi(t))\in\Omega_0\} \geq\tfin^\chi-T(f)$, then \eqref{eq:00068} and \eqref{zeq:20} imply
% \begin{align}
%   \label{zeq:53}
%   &\Lambda(\chi)\geq\,{\textstyle\frac12}\left(\tfin^\chi -T(f)\right)
% -c_f\,T(f)
% \\
%   \label{eq:47}
%   \text{with}\ \ \ \ &c_f=\!\!\max_{(\psi,\varphi) \in\mathcal{C},\,|\varphi|\leq f}   |c (\psi,\varphi)|\,,
% \end{align}
\begin{equation}
  \label{zeq:53}
  \Lambda(\chi)\geq\,{\textstyle\frac12}\!\left(\tfin^\chi -T(f)\right)
-c_f\,T(f)
\ \ \ 
  \text{with}\ \ \ \ c_f=\!\!\max_{(\psi,\varphi) \in\mathcal{C},\,|\varphi|\leq f}   |c (\psi,\varphi)|\,,
\end{equation}
and this does imply, using \eqref{zeq:45}, the first limit in \eqref{zeq:50}. Similarly, for $\chi$ close enough to
$S^\pi(\varphi^0)$,
one has $\meas \{t\in[0,\tfin^\chi]\,,\ (\psi^\chi(t),\varphi^\chi(t))\in\Omega_\pi\} \geq\tfin^\chi-T(f)$,
$\meas\,\{t\in[0,\tfin^\chi]\,,\ (\psi^\chi(t),\varphi^\chi(t))\notin\bigl(\Omega_0\cup \Omega_\pi\bigr)\}\leq T(f)$ and
hence, using \eqref{zeq:53} again,
\begin{equation}
  \label{zeq:46}
  \Lambda(\chi)\leq -{\textstyle\frac12}\left(\tfin^\chi -T(f)\right) +
c_f\,\,T(f)
\end{equation}
with $c_f$ as in \eqref{eq:47}. This implies, according to \eqref{zeq:45}, the second limit in \eqref{zeq:50}.

\smallskip 

\textbf{\boldmath Case b: $-\frac\pi2< \varphi^0<0$ and $\varphi^1=0$.}
\underline{If $\overline{\sigma}=0$}, the solutions such that $\varphi(0)=\varphi^0$, $\varphi(\tfin)=\varphi^1$ must be
in the region $E$, and the proof from case (a) applies, where $\varphi^1$ is replaced with zero.
\underline {If $\overline{\sigma}>0$}, the solutions on the stable manifolds $\{\psi=S^0(\varphi)\}$ and $\{\psi=S^\pi(\varphi)\}$ (see
Figure~\ref{fig:regions}) also qualify for this case, because $\varphi$ reaches zero in finite time. Hence we have to examine the
solutions such that $(\psi(0),\varphi(0))=(\chi,\varphi^0)$ with $\chi\in [S^0(\varphi^0),S^\pi(\varphi^0)]$ instead of
the \emph{open} interval; the solutions $(\psi^\chi(.),\varphi^\chi(.))$ to the Cauchy problem \eqref{eq:0024}-\eqref{zeq:48}
are still well defined and depend continuously on
$\chi$; however, uniqueness of $\tfin^\chi$ such that \eqref{zeq:49}  holds for
$\chi\in(S^0(\varphi^0),S^\pi(\varphi^0))$ but not for $\chi=S^0(\varphi^0)$ or $\chi=S^\pi(\varphi^0)$. If we call
$\tfin^{S^0(\varphi^0)}$ (resp. $\tfin^{S^\pi(\varphi^0)}$) the \emph{first} time $t$ such that
$\varphi^{S^0(\varphi^0)}(t)=0$ (resp. $\varphi^{S^\pi(\varphi^0)}(t)=0$), the solutions to be considered are these with
initial condition $(\psi(0),\varphi(0))=(\chi,\varphi^0)$, $S^0(\varphi^0)<\chi<S^\pi(\varphi^0)$ on the time interval
$[0,\tfin^\chi]$ and these with initial condition $(\psi(0),\varphi(0))=(\chi,\varphi^0)$,
$\chi\in\{S^0(\varphi^0),S^\pi(\varphi^0)\}$ on the time intervals $[0,\tau]$, $\tfin^\chi\leq\tau<+\infty$.
With the first set of solutions, one reaches $\bar\lambda\in[\Lambda(S^0(\varphi^0)),\Lambda(S^\pi(\varphi^0))]$; the set of
solutions with initial condition $(S^0(\varphi^0),\varphi^0)$ allows one to reach $\bar\lambda$ larger than
$\Lambda(S^0(\varphi^0))$ as the value of
$
\int_0^{\tau}   c(\psi^{S^0(\varphi^0)}(s),\varphi^{S^0(\varphi^0)}(s)) \mathrm{d}s
$
varies from $\Lambda(S^0(\varphi^0))$ to $+\infty$
as $\tau$ varies from $\tau^{S^0(\varphi^0)}$ (the smallest such value such that $\varphi {S^0(\varphi^0)}(\tau)=0$) to
$+\infty$; these solutions with initial condition $(S^\pi(\varphi^0),\varphi^0)$ allow one to reach $\bar\lambda$ smaller than $\Lambda(S^\pi(\varphi^0))$.

\textbf{\boldmath Case c: $\varphi^0=\varphi^1=0$.}
It suffices to chose $\tfin=|\bar\lambda|$ and the solution to be the
equilibrium $(0,0)$ if $\bar\lambda\geq0$ or the equilibrium $(\pi,0)$ if $\bar\lambda\geq0$. Then \eqref{eq:0027} is
satisfied because $c(0,0)=1$, $c(\pi,0)=-1$. 

\smallskip 

\textbf{\boldmath Case d: $0< \varphi^0<\varphi^1<\frac\pi2$.}
This is the other ``generic'' case, with case (a).
According to Lemma \ref{lem:regions},
the solutions such that $\varphi(0)=\varphi^0$, $\varphi(\tfin)=\varphi^1$ must lie in one of the regions
$F$, $E$, or $F^+$
or in the unstable manifolds $\mathcal{U}^0$ or  $\mathcal{U}^\pi$ that separate them (see Figure~\ref{fig:regions}).
These solutions satisfy $\varphi(0)=\varphi^0$, hence $S^0(\varphi^0)<\psi(0)<S^\pi(\varphi^0)$. For any
$\chi$ in the open interval $(S^0(\varphi^0),S^\pi(\varphi^0))$, let
$(\psi^\chi(.),\varphi^\chi(.))$ be the solution  to the Cauchy problem \eqref{eq:0024}-\eqref{zeq:48}. 
According to Lemma \ref{lem:regions}:
\\- If $S^0(\varphi^0)<\chi< Z_b^0(\varphi^0)$ or
$Z_b^0(\varphi^\pi)<\chi<S^\pi(\varphi^0)$, $t\mapsto\varphi^\chi(t)$ is first decreasing, then crosses the set of
zeroes of $b$ at some time $t^o$: $\psi^\chi(t^o)=Z_b^0(\varphi^\chi(t^o))$ or
$\psi^\chi(t^o)=Z_b^\pi(\varphi^\chi(t^o))$ and is increasing for $t$ larger than $t^o$. Hence, since
$\varphi^\chi(t^o)<\varphi^0<\varphi^1$, there is a unique $\tfin^\chi$ (larger than $t^o$) such that
$\varphi^\chi(\tfin^\chi)=\varphi^1$. 
\\- If $Z_b^0(\varphi^0)\leq \chi\leq Z_b^\pi(\varphi^0)$,
$t\mapsto\varphi^\chi(t)$ is monotonic increasing for positive times and cannot have a limit, hence it takes all the
values between $\varphi^0$ and $\frac\pi2$ only once and there is a unique $\tfin^\chi$ such that \eqref{zeq:49} holds.
\\
In both cases, $Z_b^0(\varphi^1)<\psi^\chi(\tfin^\chi)<Z_b^\pi(\varphi^1)$, hence 
$b(\,\widehat\varphi^\chi(\tfin^\chi)\,,\,\widehat\psi^\chi(\tfin^\chi)\,)>0$, and so
there is a constant $k>0$ such that
$|\varphi^\chi(t')-\varphi^\chi(t'')|>k|t'-t''|$ for $t',t''$ in a neighborhood of $\tfin^\chi$. This implies that
$\tfin^\chi$ depends continuously on $\chi$. This continuous dependence on $\chi$ allows us to define the continuous map
$\Lambda:(U^0(\varphi^0),U^\pi(\varphi^0)) \to \RR$ by \eqref{eq:00068}; as in case (a), let us prove the following
limits, sufficient to imply that $\Lambda$ is onto:
\begin{equation}
  \label{zeq:51}
  \lim_{\substack{\chi\to U^0(\varphi^0)\\\chi>U^0(\varphi^0)}}\Lambda(\chi)=+\infty
\,,\ \ 
\lim_{\substack{\chi\to U^\pi (\varphi^0)\\\chi< U^\pi(\varphi^0)}}\Lambda(\chi)=-\infty
\,.
\end{equation}
This follows as in case (a): first we get
\begin{displaymath}
  \lim_{\substack{\chi\to U^0(\varphi^0)\\\chi>S^0(\varphi^0)}}\tfin^\chi=
\lim_{\substack{\chi\to U^\pi (\varphi^0)\\\chi< S^\pi(\varphi^0)}}\tfin^\chi=
+\infty
\end{displaymath}
and then, with $f=|\varphi_1|$,  \eqref{zeq:53} holds for $\chi$ close to $U^0(\varphi^0)$ and \eqref{zeq:46} for $\chi$ close to $U^\pi(\varphi^0)$.

\smallskip

\textbf{\boldmath Case e: $0< \varphi^0=\varphi^1<\frac\pi2$.}
This is a similar to the previous case but degenerate in the sense that $\tfin^\chi=0$ if
$Z_b^0(\varphi^0)\leq \chi\leq Z_b^\pi(\varphi^0)$.
The only nontrivial trajectories that display the
same initial and final values of $\varphi$ lie in the regions $F$ or $F^+$, and they join points on one side of the curve
where $b$ vanishes ($\psi=Z_b^0(\varphi)$ or $\psi=Z_b^\pi(\varphi)$) to points on the other side. We have
$$
\Lambda(\chi)
\begin{cases}
  >0&\text{if}\;U^0(\varphi^1)<\chi<Z_b^0(\varphi^1)\,,
\\
  =0&\text{if}\;Z_b^0(\varphi^1)\leq\chi\leq Z_b^\pi(\varphi^1)\,,
\\
  <0&\text{if}\;Z_b^\pi(\varphi^1)<\chi<U^\pi(\varphi^1)\,;
\end{cases}
$$
\eqref{zeq:51} still holds and the same arguments prove that $\Lambda$ is onto.
\qed

% \bibliography{Gros}

\begin{thebibliography}{10}
\providecommand{\url}[1]{{#1}}
\providecommand{\urlprefix}{URL }
\expandafter\ifx\csname urlstyle\endcsname\relax
  \providecommand{\doi}[1]{DOI~\discretionary{}{}{}#1}\else
  \providecommand{\doi}{DOI~\discretionary{}{}{}\begingroup
  \urlstyle{rm}\Url}\fi

\bibitem{Arno89}
Arnold, V.I.: Mathematical methods of classical mechanics, \emph{Graduate Texts
  in Mathematics}, vol.~60, 2nd edn.
\newblock Springer-Verlag, New York (1989).

\bibitem{Bao-Che-She00}
Bao, D., Chern, S.S., Shen, Z.: An introduction to {R}iemann-{F}insler
  geometry, \emph{Graduate Texts in Mathematics}, vol. 200.
\newblock Springer-Verlag, New York (2000)

\bibitem{Bomb-Pom13}
Bombrun, A., Pomet, J.B.: The averaged control system of fast oscillating
  control systems.
\newblock SIAM J. Control Optim. \textbf{51}(3), 2280--2305 (2013).
\newblock \doi{10.1137/11085791X}.

\bibitem{Bonn-Cai09forum}
Bonnard, B., Caillau, J.B.: Geodesic flow of the averaged controlled {K}epler
  equation.
\newblock Forum Mathematicum \textbf{21}(5), 797--814 (2009).
\newblock \doi{10.1515/FORUM.2009.038}.

\bibitem{Bonn-Cai-Duj06b}
Bonnard, B., Caillau, J.B., Dujol, R.: Energy minimization of single input
  orbit transfer by averaging and continuation.
\newblock Bull. Sci. Math. \textbf{130}(8), 707--719 (2006)

\bibitem{Bonn-Fau-Tre06}
Bonnard, B., Faubourg, L., Tr{\'e}lat, E.: M{\'e}canique c{\'e}leste et
  contr{\^o}le des v{\'e}hicules spatiaux, \emph{Math{\'e}matiques \&
  Applications}, vol.~51.
\newblock Springer-Verlag, Berlin (2006)

\bibitem{Bonn-Sug12book}
Bonnard, B., Sugny, D.: Optimal Control with Applications in Space and Quantum
  Dynamics, \emph{AIMS Series on Applied Mathematics}, vol.~5.
\newblock AIMS (2012)

\bibitem{Edel64}
Edelbaum, T.N.: Optimum low-thrust rendezvous and station keeping.
\newblock AIAA J. \textbf{2}, 1196--1201 (1964)

\bibitem{Edel65}
Edelbaum, T.N.: Optimum power-limited orbit transfer in strong gravity fields.
\newblock AIAA J. \textbf{3}, 921--925 (1965)

\bibitem{Fili88}
Filippov, A.F.: Differential equations with discontinuous righthand sides,
  \emph{Mathematics and its Applications (Soviet Series)}, vol.~18.
\newblock Kluwer Academic Publishers Group, Dordrecht (1988).
\newblock \doi{10.1007/978-94-015-7793-9}.

\bibitem{Geff97th}
Geffroy, S.: G{\'e}n{\'e}ralisation des techniques de moyennation en contrôle
  optimal - {A}pplication aux probl{\`e}mes de rendez-vous orbitaux en
  pouss{\'e}e faible.
\newblock Th{\`e}se de doctorat, Institut National Polytechnique de Toulouse,
  France (1997)

\bibitem{Geff-Epe97}
Geffroy, S., Epenoy, R.: Optimal low-thrust transfers with
  constraints-generalization of averaging technics.
\newblock Acta Astronautica \textbf{41}(3), 133--149 (1997).
\newblock \doi{10.1016/S0094-5765(97)00208-7}.

\bibitem{Hart82}
Hartman, P.: Ordinary Differential Equations, 2 edn.
\newblock Birkh{\"a}user (1982)

\bibitem{Hirs-Sma-Dev04}
Hirsch, M.W., Smale, S., Devaney, R.L.: Differential Equations, Dynamical
  Systems, and an Introduction to Chaos.
\newblock Academic Press (2004)

\bibitem{Pont-Bol-Gam-M62}
Pontryagin, L.S., Boltyanskii, V.G., Gamkrelidze, R.V., Mishchenko, E.F.: The
  mathematical theory of optimal processes.
\newblock Interscience Publishers John Wiley \& Sons, Inc.\, New York-London
  (1962)

\end{thebibliography}
% \bibliographystyle{spmpsci}  

\end{document}